\documentclass[12pts]{article}

\usepackage[utf8]{inputenc}
\usepackage{graphics}
\usepackage{graphicx}
\usepackage{amssymb}
\usepackage{amsmath}
\usepackage{subfigure}
\usepackage{amsmath,amsbsy,amsfonts,amssymb}
\usepackage{latexsym,amssymb,amsmath,amsthm,amsfonts,enumerate,verbatim,xspace,
exscale}
\usepackage[T1]{fontenc}
\usepackage{amsfonts}
\usepackage[toc,page]{appendix}
\usepackage{cite}
\usepackage{apacite}
\usepackage{color}
\newtheorem{theorem}{Theorem}
\newtheorem{lemma}[theorem]{Lemma}

\newtheorem{proposition}[theorem]{Proposition}
\newtheorem{remark}[theorem]{Remark}

\newtheorem{example}[theorem]{Example}

\begin{document}

\title{Dimension reduction through Gamma convergence for general prestrained thin elastic sheets}
\author{David Padilla-Garza}
\maketitle


\begin{abstract}
We study thin films with residual strain by analyzing the $\Gamma-$limit of non-Euclidean elastic energy functionals as the material's thickness tends to $0.$ We begin by extending prior results \cite{bhattacharya2016plates} \cite{agostiniani2018heterogeneous} \cite{lewicka2018dimension} \cite{schmidt2007plate},  to a wider class of films, whose prestrain depends on both the midplate and the transversal variables. The ansatz for our $\Gamma-$convergence result uses a specific type of wrinkling, which is built on exotic solutions to the Monge-Ampere equation, constructed via convex integration \cite{lewicka2017convex}. We show that the expression for our $\Gamma-$limit has a natural interpretation in terms of the orthogonal projection of the residual strain onto a suitable subspace. We also show that some type of wrinkling phenomenon is necessary to match the lower bound of the $\Gamma-$limit in certain circumstances. These results all assume a prestrain of the same order as the thickness; we also discuss why it is natural to focus on that regime by considering what can happen when the prestrain is larger. 
\end{abstract}

\section{Introduction}

We all know that a material tends to expand when heated. Expansion, or more generally a change in the stress-free metric of the material can also be caused by other factors. The object of study of this paper is a composite made of thin sheets of material with different stress free metrics. 

Thin elastic sheets that deform because of residual strain have recently been the focus of numerous mathematical \cite{agostiniani2018heterogeneous} \cite{bhattacharya2016plates} \cite{lewicka2011scaling} \cite{lewicka2018dimension} \cite{schmidt2007plate} \cite{lewicka2010foppl} \cite{lewicka2010shell} \cite{lewicka2011matching} and engineering/physics \cite{gladman2016biomimetic} \cite{miskin2018graphene} \cite{kim2012designing} \cite{klein2007shaping} \cite{pezzulla2015morphing} \cite{aharoni2018universal} \cite{sharon2002mechanics} studies. Mathematically, this represents the challenge of further generalizing the seminal work of \cite{friesecke2002theorem} to noneuclidean geometries, while from the point of view of applications, careful design of prestrain in thin sheets can be applied to 3D printing \cite{gladman2016biomimetic}. 

Other works which are not directly related, but still deal with prestrained thin sheets are \cite{efrati2009elastic}, which offers a physical treatment, and \cite{kupferman2014riemannian} and \cite{maor2019role}, which deal with thin elastic sheets immersed in Riemannian manifolds. 

Wrinkling in non-euclidean thin sheets is the focus of \cite{tobasco2021curvature}. In that paper, the author analyzes the wrinkling patterns in a thin spherical sheet confined to the surface of a liquid. Although in both this paper and \cite{tobasco2021curvature} the technique used is $\Gamma-$convergence, there are notable differences in both the approach and the result. In \cite{tobasco2021curvature}, the author assumes a geometrically linear von Kármán model in which the bending and membrane energy interact. We consider a general nonlinear elastic functional in which the membrane term dominates. The topology considered is also significantly different. 

From a mathematical perspective, the essential contribution of this paper is to generalize the work of \cite{agostiniani2018heterogeneous} and \cite{schmidt2007plate} to an arbitrary (non oscillatory) elastic energy and prestrain (as long as the metric is euclidean to leading order). We also show that this case can be reduced to one with a thickness independent elastic law and linear-in-thickness prestrain. We also analyze the optimality of the hypotheses. Unlike previous work, our lower bound needs to be complemented with an upper bound construction with fractional powers of $h$ to meet the energy. This ansatz involves the application of results from the literature that were proved using convex integration. We note that convex integration has also been used in the study of isometric  immersions, the Monge Ampere equation \cite{lewicka2017convex}, and fluid dynamics (see for example \cite{de2017high}).   

 
%
 
 The structure of the paper is as follows: we begin with the statement of our results and some introductory remarks. In section 3, we prove preliminary results which are very close to ones present in the literature: compactness and a lower bound for a sheet with arbitrary prestrain and elastic law, and an upper bound for a sheet with thickness-independent elastic law, and prestrain satisfying a centering hypothesis. In section 4, we prove an upper bound for thin sheets in which the elastic law is arbitrary, and the prestrain equals the identity at leading order, but is otherwise arbitrary. It is in this section where we apply a result proved using convex integration, as mentioned earlier. In order to prove that our ansatz achieves the lower bound, it is also necessary to show that it is possible to glue such constructions with classical ones. We note that this construction only works in the case when the preferred metric is the identity to leading order. The $\Gamma$ limit in the case of an arbitrary preferred metric is an open question.  Next, in section 5 we prove that the resulting quadratic function is, up to an inevitable left over residual strain, equivalent to the quadratic function arising as the $\Gamma$ limit of a sheet with thickness-independent elastic law and linear-in-thickness prestrain. In section 6 we prove that an ansatz that blows up at the $h$ scale is necessary to relieve a wide class of strains, and we also identify a regime in which such oscillations do not take place. Lastly in section 7, we analyze whether the hypotheses of the theorem are optimal. We conclude that several pathologies may occur if any of the hypotheses are omitted, even though a great part of the conclusion may still hold. 
 
 
 We wish to thank Marta Lewicka for suggesting an ansatz based on the upper bound for a von Kármán energy scaling.

\section{Setting and overview of results}

Let $\Omega \subset \mathbf{R}^{2}$ be an open, bounded, connected set with piece-wise $C^{1}$ boundary. Let
\begin{equation}
\Omega^{h}=\Omega \times \left( -\frac{h}{2},\frac{h}{2}  \right).
\end{equation} 
We will denote a point $x \in  \Omega^{h}$ by $x=(x_{1}, x_{2}, x_{3})=(x', x_{3}).$ We will first study functionals of the form 
\begin{equation}
\mathcal{E}^{h}(u^{h})= \frac{1}{h}\int_{\Omega^{h}} W(x',\frac{x_{3}}{h},\nabla u^{h}(x) (A^{h}(x))^{-1}) dx.
\end{equation} 
For $u^{h} \in W^{1,2}(\Omega^{h}, \mathbf{R}^{3}).$ This corresponds to a thin film in which the preferred metric is non-Euclidean, and varies with thickness. We assume that $A^{h}(x)$ is of the form
\begin{equation}
A^{h}(x)=\overline{A}(x')+hB(x',\frac{x_{3}}{h}),
\end{equation}
with $\overline{A} \in C^{\infty}(\overline{\Omega}, \mathbf{R}^{3\times 3}_{sym,pos})$ and $B \in L^{\infty}(\Omega^{1}, \mathbf{R}^{3\times 3}_{sym}).$ We will later specialize to the case $\overline{A}=Id$ (strictly speaking, we only have a $\Gamma$ convergence result in the case $\overline{A}=Id$). 

Let 
\begin{equation}
\begin{split}
\mathcal{G}^{h}(x', \frac{x_{3}}{h})&=(A^{h}(x', \frac{x_{3}}{h}))^{2},\\
\mathcal{G}(x')&=(\overline{A}(x'))^{2}.
\end{split}
\end{equation}
Let $y \in W^{2,2}(\Omega,\mathbf{R}^{3})$ be such that 
\begin{equation}
\nabla y^{T} \nabla y =\left( \overline{A}^{2}(x') \right)_{2\times 2},
\end{equation}
where we denote by $\nabla y$ the $3\times 2$ matrix of partial derivatives. Define the Cosserat vector $b(x')$ as 
\begin{equation}
b(x')=(\nabla y)(\mathcal{G}_{2\times 2})^{-1}[\mathcal{G}_{1,3},\mathcal{G}_{2,3}]^{T}+\frac{\sqrt{\det \mathcal{G}}}{\sqrt{\det \mathcal{G}_{2\times 2}}} \nu(x'),
\end{equation}
where $\nu(x')$ is the unit normal to the surface, $\nu(x')=\frac{\partial_{1}y \times \partial_{2}y}{\parallel \partial_{1}y \times \partial_{2}y \parallel}.$ The Cosserat vector satisfies
\begin{equation}
\left( \nabla y| b \right)^{T}\left( \nabla y| b \right)=\left( \overline{A}(x') \right)^{2}.
\end{equation}
We assume that the elastic law $W: \Omega^{1}\times \mathbf{R}^{3\times 3} \to \mathbf{R}$ satisfies 
\begin{itemize}
\item[i)] For a.e. $x \in \Omega^{1}$ the function $W(x,\cdot)$ is frame indifferent, that is
\begin{equation}
W(x,F)=W(x,RF)
\end{equation}
for every $F \in \mathbf{R}^{3\times 3}$ and $R \in SO(3).$

\item[ii)] For a.e. $x \in \Omega^{1},$ the energy $W(x,\cdot)$ is minimized at $SO(3),$ and the minimum is $0.$

\item[iii)] There exists a constant $c$ (independent of $x$) such that $ W(x,F) \geq c \ \mbox{dist}^{2}(F,SO(3)).$

\item[iv)] There exists a neighborhood $\mathcal{U}$ of $SO(3)$ such that for all $x \in \Omega^{1}$ we have $W(x,F)$ is $C^{2}$ regular in $F$ for $F \in \mathcal{U}.$ We also have that $D^{2}W(x,F)$ is uniformly equicontinuous in $F$ for all $F \in \mathcal{U},$ i.e. for all $\epsilon>0$ and $x \in \Omega^{1},$ there exists $\delta>0$ such that if $\parallel F -G \parallel<\delta$ then
\begin{equation}\label{equicontinuity}
\left| D^{2}W(x,F)-D^{2}W(x,G) \right| <\epsilon.
\end{equation}

\item[v)] There exists a constant $C$ such that
\begin{equation}
Q_{3}(x, F) \leq C \left| \text{sym} F \right|^{2}
\end{equation}
for all $x \in \Omega^{1}.$

\end{itemize}

If $M \in \mathbf{R}^{m_{1}\times m_{2}}$ and $N \in \mathbf{R}^{n_{1}\times n_{2}}$ with $m_{1}>n_{1}$ and $m_{2}>n_{2},$ we define the operation
\begin{equation}
M+N=M+\iota(N),
\end{equation}
where $\iota$ is the inclusion function from $\mathbf{R}^{n_{1}\times n_{2}}$ to $\mathbf{R}^{m_{1}\times m_{2}}$ defined as 
\begin{equation}
\iota(N)=\sum_{i,j} N_{ij}e_{i}\otimes e_{j}.
\end{equation}

Let 
\begin{equation}
{Q}_{3}(x,F)=D^{2}W(x,\mbox{Id})(F,F),
\end{equation}
let ${L}(x)$ be the tensor such that be such that
\begin{equation}
{Q}_{3}(x,F)=\langle L(x)F,F \rangle.
\end{equation}
For $X \in \mathbf{R}^{2 \times 2}$ define the quadratic form 
\begin{equation}\label{definition of Q_2}
Q_{2}(x',t,X,\overline{A})=\min_{d \in \mathbf{R}^{3}} Q_{3}(x',t,\overline{A}^{-1}(x')[X+d \otimes e_{3}]\overline{A}^{-1}(x'))
\end{equation}
and for $X \in L^{2}((- \frac{1}{2},\frac{1}{2}), \mathbf{R}^{2 \times 2})$
\begin{equation}\label{definition of Q_2'}
Q_{2}'(x',X,\overline{A})=\min_{s \in \mathbf{R}^{ 2\times 2}} \int_{- \frac{1}{2}}^{\frac{1}{2}} Q_{2}(x',t,X + s,\overline{A}) \, dt.
\end{equation}


Let $y \in W^{2,2}(\Omega,\mathbf{R}^{3}).$ Define the functional ${\mathcal{I}}(y)$ as
\begin{equation}\label{Gammalimitfunctional}
{\mathcal{I}}(y)=
\begin{cases}
\frac{1}{2}\int_{\Omega^{1}} Q_{2}'(x', t \nabla {y}^{T} \nabla {b}-(\overline{A}B)_{2\times 2}, \overline{A}) dx', \mbox{ if } (\nabla y)^{T}(\nabla y)=\mathcal{G}_{2\times 2}\\
\infty \mbox{ if not.}
\end{cases}
\end{equation}

Before stating our results precisely we briefly review some of the existing results and describe how they relate to ours. The work of \cite{schmidt2007plate} considered plates whose elastic law and prestrain were independent of $x'.$ When specialized to that case, our treatment is equivalent to his. The work of \cite{agostiniani2018heterogeneous} considered prestrains that depend on $x_{3}$ as well as $x',$ but imposed the restriction
\begin{equation}\label{centeringcondition}
\text{curlcurl} \left( \int_{-\frac{1}{2}}^{\frac{1}{2}} B_{2\times 2}(x', x_{3}) \, dx_{3} \right) =0.
\end{equation}
The reference \cite{agostiniani2018heterogeneous} also took the elastic law to be independent of $x_{3}.$ The condition \eqref{centeringcondition} is not particularly natural, but it was needed in \cite{agostiniani2018heterogeneous} to give an ansatz that meets the lower bound (i.e. it was needed to prove that the $\Gamma-$liminf and the $\Gamma-$limsup agree). The most important development in this paper is that we do not assume a condition like \eqref{centeringcondition}, provided $\overline{A}(x')=Id.$ Also, unlike \cite{agostiniani2018heterogeneous} our elastic law can depend on $x_{3}$ as well as $x'.$

This development uses a new upper bound ansatz. 

We turn now to more precise statements of our results, specifically
\begin{itemize}
\item[$\bullet$] A lower bound (Theorem \ref{Gammaconvergencetheorem1}) which holds for any $\overline{A}(x').$

\item[$\bullet$] An upper bound (Proposition \ref{propositiontheorem}) that's directly analogous to that of  \cite{bhattacharya2016plates} (in particular, it does assume a condition like \eqref{centeringcondition}). 

\item[$\bullet$] A better upper bound (Theorem \ref{theorem2}), which matches the lower bound and therefore gives a $\Gamma-$convergence theorem when $\overline{A}=Id$ (but with no artificial condition like \eqref{centeringcondition}).
\end{itemize}

The proofs of Theorem \ref{Gammaconvergencetheorem1} and Proposition \ref{propositiontheorem} use tools similar to those of \cite{agostiniani2018heterogeneous} and \cite{schmidt2007plate}, but the proof of Theorem \ref{theorem2} is different: as mentioned in the introduction, it uses a wrinkling ansatz from \cite{lewicka2017convex}, which was obtained using convex integration. 

\begin{theorem} \label{Gammaconvergencetheorem1}
Let $u^{h} \in W^{1,2}(\Omega^{h} \to \mathbf{R}^{3})$ be a sequence such that 
\begin{equation}\label{hsquarebound}
\mathcal{E}^{h}(u^{h}) \leq k h^{2}.
\end{equation}
Then
\begin{itemize}
\item[i] Compactness. There exist $c^{h} \in \mathbf{R}^{3}$ and $Q^{h} \in SO(3)$ such that for the renormalized deformations
\begin{equation}
y^{h}(x',x_{3})=Q^{h}u^{h}(x',\frac{x_{3}}{h})-c^{h}
\end{equation}
we have 
\begin{equation}
y^{h} \to y 
\end{equation}
strongly in $W^{1,2}(\Omega^{1}, \mathbf{R}^{3}),$ for some $y\in W^{2,2}(\Omega^{1}, \mathbf{R}^{3})$ independent of $x_{3}.$ With a slight abuse of notation we treat $y$ interchangeably as a function defined on $\Omega$ or $\Omega^{1}.$ The function $y$ satisfies that $(\nabla y)^{T}\nabla y=(\overline{A}^{2}(x'))_{2\times 2}.$ We also have
\begin{equation}
\frac{1}{h} \partial_{3}y^{h} \to b
\end{equation}
strongly in $L^{2},$ where $b$ is the Cosserat vector. 

\item[ii] Lower bound:
\begin{equation}
\liminf_{h \to 0} \frac{1}{h^{2}} \mathcal{E}^{h}(u^{h}) \geq {\mathcal{I}}(y).
\end{equation} 

\end{itemize}

\end{theorem}

A matching upper bound also holds, under additional hypotheses. We state it as a proposition since it does not require any ideas other than the ones already present in the literature. 
\begin{proposition}\label{propositiontheorem}
 Upper bound. Assume $W(x',x_{3})=W(x'),$ and
\begin{equation}
\int_{-\frac{1}{2}}^{\frac{1}{2}} B_{2\times 2}(x',t)dt=0
\end{equation}
and let $y \in W^{2,2}(\Omega, \mathbf{R}^{3})$ 
then there exists a sequence $u^{h}(x)$ such that, for the renormalized  sequence $y^{h}(x',x_{3})=u^{h}(x',\frac{x_{3}}{h})$ and $\nabla^{h}y^{h}=\left(\nabla'y^{h}, \frac{1}{h}\partial_{3}y^{h} \right)$ we have 
\begin{equation}
\nabla^{h} y^{h} \to (\nabla' y, b)
\end{equation}
strongly in $W^{1,2}(\Omega^{1}, \mathbf{R}^{3})$ (identifying $y$ with its trivial extension in $\Omega^{1}$) and
\begin{equation}
\lim_{h \to 0} \frac{1}{h^{2}} \mathcal{E}^{h}(y^{h})={\mathcal{I}}(y),
\end{equation}
where
\begin{equation}
\mathcal{E}^{h}(y^{h})= \int_{\Omega^{h}} W(x',\frac{x_{3}}{h},\nabla^{h} y^{h}(x) (A^{h}(x))^{-1}) dx.
\end{equation} 
\end{proposition}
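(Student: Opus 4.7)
Since Theorem \ref{Gammaconvergencetheorem1} already provides $\liminf_{h} \frac{1}{h^{2}}\mathcal{E}^{h}(y^{h}) \geq \mathcal{I}(y)$ for any sequence with bounded rescaled energy, it suffices to exhibit a matching recovery sequence. The plan is to construct a Kirchhoff--Love ansatz in the spirit of Schmidt and Agostiniani--DeSimone--Koumatos. By a standard density and diagonal argument I first reduce to $y \in C^{\infty}(\overline{\Omega}; \mathbf{R}^{3})$ satisfying the metric constraint and $B \in C^{\infty}(\overline{\Omega}^{1}; \mathbf{R}^{3\times 3}_{sym})$; in the non-Euclidean case the reduction uses a density/matching property for $W^{2,2}$ isometric immersions of $\mathcal{G}_{2\times 2}$ that I would invoke from the literature.

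The ansatz takes the form
\begin{equation*}
y^{h}(x', x_{3}) = y(x') + h x_{3} b(x') + h^{2} \beta(x', x_{3}),
\end{equation*}
with $\beta$ engineered so that its derivatives realize the pointwise minimizers $\overline{s}(x')$ and $\overline{d}(x', x_{3})$ of the quadratic form in \eqref{limitquadratic} evaluated at $H(x') = \nabla y^{T}\nabla b$. I would split $\beta = \beta_{\parallel}(x') + \beta_{\perp}(x', x_{3})$: the warping part $\beta_{\perp}$ has vanishing $x_{3}$-average and supplies both the $e_{3}\otimes \overline{d}$ component via $\partial_{x_{3}}\beta_{\perp}$ and the $x_{3}$-varying in-plane shear needed to absorb $x_{3}H - \overline{A}B$ on the $2\times 2$ block, while the tangential part $\beta_{\parallel}$ carries the $x_{3}$-constant in-plane correction. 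The centering hypothesis $\int_{-1/2}^{1/2}B\,dt = 0$ is decisive here: it kills the $x_{3}$-constant piece of the bracket in \eqref{limitquadratic}, so $\beta_{\parallel}$ does not have to produce any prescribed nontrivial symmetric gradient and no curl--curl compatibility obstruction arises; this is precisely what lets one avoid a wrinkling ansatz.

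The rest is a routine Taylor expansion. Writing $(\nabla y \,|\, b) = R(x')\overline{A}(x')$ with $R(x') \in SO(3)$, a direct computation yields
\begin{equation*}
R(x')^{T}(\nabla y^{h})(A^{h})^{-1} = \mbox{Id} + h\,M^{h}(x', x_{3}) + O(h^{2}) \quad \text{in } L^{\infty},
\end{equation*}
and by the construction of $\beta$ the symmetric part of $M^{h}$ coincides with the optimal argument of $\mathcal{Q}_{3}$ in \eqref{limitquadratic}. Frame indifference together with $W(x,\mbox{Id})=0=DW(x,\mbox{Id})$ and the $C^{2}$-equicontinuity \eqref{equicontinuity} give pointwise convergence of $\frac{1}{h^{2}}W(x', (\nabla y^{h})(A^{h})^{-1})$ to the integrand of $\frac{1}{2}\mathcal{Q}$, and the quadratic growth bound (iii) supplies the $L^{1}$-dominant needed for dominated convergence; integration in $x_{3}$ then produces $\mathcal{I}(y)$. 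Strong convergence of $\nabla^{h}y^{h}$ to $(\nabla y, b)$ is immediate from the explicit form of the ansatz. The step I expect to be the main obstacle is producing a sufficiently regular measurable selection $(\overline{s}, \overline{d})$ of the pointwise minimizers and assembling it into a single smooth $\beta$ whose derivatives match these selections globally; the smoothness of $\overline{A}$ and $B$ combined with the centering hypothesis makes this explicit, which is precisely why this hypothesis is imposed here but dropped (at the cost of a convex-integration construction) in Theorem \ref{theorem2}.
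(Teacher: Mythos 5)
The proposal matches the paper's (very brief) cited route: a Kirchhoff--Love ansatz as in \cite{bhattacharya2016plates}, adapted to $x'$-dependent $W$ via the equicontinuity hypothesis iv). You correctly identify the decisive point: with $W$ independent of $x_3$ (so $L_2 = L_2(x')$) and $\int_{-1/2}^{1/2} B\,dt = 0$, the cross term in the pointwise minimization over $s$ in \eqref{limitquadratic} vanishes, hence $s^* = 0$, so no $h$-order in-plane displacement is needed and no curl--curl compatibility constraint ever arises (this is exactly what is replaced by the decomposition $s = \nabla_{sym}g + \alpha\mathbf{II}$ of \cite{schmidt2007plate}, or by the convex-integration wrinkling of Theorem \ref{theorem2}, when centering fails). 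However, your description of the corrector is off in a way worth noting. A term $h^2\beta(x',x_3)$ contributes to $\nabla^h y^h$ at order $h$ only through $\frac{1}{h}\partial_3(h^2\beta) = h\,\partial_3\beta$, i.e.\ only in the third column; $h^2\nabla'\beta$ is negligible. So a tangential piece $\beta_\parallel(x')$ at order $h^2$ cannot ``carry the $x_3$-constant in-plane correction'' --- producing a nonzero $s$ at order $h$ would require an order-$h$ displacement $h\,g(x')$, which is precisely the term you are allowed to drop here because $s^*=0$. Likewise $\beta_\perp$ does not ``absorb $x_3H - \overline{A}B$ on the $2\times 2$ block'': that block of the leading-order strain is already produced automatically by $h x_3 b$ together with the expansion $(A^h)^{-1} = \overline{A}^{-1} - h\overline{A}^{-1}B\overline{A}^{-1} + O(h^2)$; the only job of $\beta$ is to realize the pointwise-optimal $d$ via $\partial_3\beta$, after a mollification in $x'$ since that minimizer is in general only $L^\infty$. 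With these two misstatements corrected, the plan is sound and is essentially the one the paper (and \cite{bhattacharya2016plates}) uses.
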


The following result does better than Proposition 2, in the sense that it requires no centering condition like \eqref{centeringcondition}. However, it is restricted to the case $\overline{A}(x')=Id.$

\begin{theorem}\label{theorem2}
Assume $\overline{A}=Id,$ and let $y \in W^{2,2}(\Omega, \mathbf{R}^{3})$ 
then there exists a sequence $u^{h}(x)$ such that, for the renormalized  sequence $y^{h}(x',x_{3})=u^{h}(x',\frac{x_{3}}{h})$ and $\nabla^{h}y^{h}=\left(\nabla'y^{h}, \frac{1}{h}\partial_{3}y^{h} \right)$ we have 
\begin{equation}
\nabla^{h} y^{h} \to (\nabla' y, \nu)
\end{equation}
strongly in $W^{1,2}(\Omega^{1}, \mathbf{R}^{3})$ (identifying $y$ with its trivial extension in $\Omega^{1}$) and
\begin{equation}
\lim_{h \to 0} \frac{1}{h^{2}} \mathcal{E}^{h}(y^{h})={\mathcal{I}}(y).
\end{equation}
\end{theorem}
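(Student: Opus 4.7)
The aim is to lift the centering hypothesis $\int_{-1/2}^{1/2} B \, dx_3 = 0$ of Proposition~\ref{propositiontheorem} under the extra assumption $\overline{A}=Id$. Decompose $B(x',x_3)=\overline{B}(x')+B_0(x',x_3)$ with $\overline{B}(x')=\int_{-1/2}^{1/2}B(x',x_3)\,dx_3$, so that $B_0$ automatically satisfies the centering condition. A straightforward adaptation of the construction of Proposition~\ref{propositiontheorem} (accommodating the $x_3$-dependence of $W$) yields a classical ansatz $u^h_{cl}$ handling the $B_0$ contribution and the bending term $\nabla y^T\nabla b$; what remains is the residual in-plane prestrain $\overline{B}$, which in general is not curl-curl-free and hence not a symmetrized gradient, so it cannot be relieved by any in-plane displacement correction. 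This is exactly the obstruction that the wrinkling is designed to bypass.

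On top of $u^h_{cl}$ I would add a correction of the form $h\phi^h(x')+h^{1/2}\psi^h(x')\nu(x')$, where $\phi^h$ is tangential to $y(\Omega)$ and $\psi^h$ is a fast out-of-plane oscillation. Expanding the in-plane Cauchy--Green tensor to order $h$ produces the von K\'arm\'an-type quantity $2h\bigl(\mathrm{sym}\,\nabla\phi^h+\tfrac{1}{2}\nabla\psi^h\otimes\nabla\psi^h\bigr)$, so the task is to arrange $\mathrm{sym}\,\nabla\phi^h+\tfrac{1}{2}\nabla\psi^h\otimes\nabla\psi^h\rightharpoonup \overline{B}$ weakly in $L^2$, with $\psi^h\to 0$ uniformly and $\nabla\psi^h$ equibounded but oscillating. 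Applying $\mathrm{curl}\,\mathrm{curl}$ eliminates $\phi^h$ and leaves the Monge--Amp\`ere-type equation $\mathrm{curl}\,\mathrm{curl}\bigl(\tfrac{1}{2}\nabla\psi\otimes\nabla\psi\bigr)=\mathrm{curl}\,\mathrm{curl}\,\overline{B}$; the convex-integration result of \cite{lewicka2017convex} furnishes $C^{1,\alpha}$ sequences $\psi^h$ solving this equation with the required oscillation scaling, after which the tangential corrector $\phi^h$ is recovered by solving a plain divergence equation for the remaining symmetric residual. The total energy is then estimated by Taylor expansion of $W$ about $SO(3)$ in the spirit of \cite{friesecke2002theorem}: frame indifference handles the rotational nonlinearity, and the equicontinuity condition~\eqref{equicontinuity} controls the $o(1)$ errors produced by the fast oscillation; because $h^{1/2}\psi^h\to 0$ uniformly and $h^{1/2}\nabla\psi^h=O(h^{1/2})$ in $L^\infty$, the strong $L^2$ convergence $\nabla^h y^h \to (\nabla' y,\nu)$ is preserved.

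The hardest part will be marrying the Lewicka--Pakzad construction, which lives on a flat planar domain and is expressed in the standard von K\'arm\'an variables, with the rest of the ansatz, which is built in the tangent-normal frame of the developable surface $y(\Omega)\subset\mathbf{R}^3$: transporting $\psi^h$ through the isometric chart $y$ must preserve both the amplitude and the oscillation scale. The wrinkled interior ansatz must then be glued smoothly to $u^h_{cl}$ near $\partial\Omega$ without generating extra energy; this gluing, flagged explicitly in the introduction, should require a boundary layer whose width is tuned to the oscillation scale of $\psi^h$, together with a cut-off smooth enough to preserve the $W^{1,2}$ bound and the target limit energy.
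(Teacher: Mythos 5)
Your proposal correctly identifies the two central ideas --- replace the centering hypothesis by a wrinkling corrector built with the convex-integration result of \cite{lewicka2017convex}, and match the $O(h^{1/2})$ normal oscillation against an $O(h)$ tangential one so that the leading membrane strain is $\mathrm{sym}\,\nabla\phi^h + \tfrac12\nabla\psi^h\otimes\nabla\psi^h$. But there are several genuine gaps between your plan and a working proof, and the part you label the hardest turns out to be the part the paper deliberately avoids.

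First, the proposal misses the structural dichotomy on which the paper's proof hinges: it splits into the region $\{\mathbf{II}=0\}$, the region $\{\mathbf{II}\neq 0\}$, and a transition layer across $\partial\{\mathbf{II}=0\}$. Wrinkling is used \emph{only} where $\mathbf{II}=0$, precisely because there the frame $Q=(\nabla y\,|\,\nu)$ is constant and the Lewicka--Pakzad construction can be inserted verbatim into the tangent/normal frame with no curvature corrections. Where $\mathbf{II}\neq 0$, one does \emph{not} need wrinkling at all: Schmidt's Lemma~3.3 gives the representation $s=\nabla_{\mathrm{sym}}g+\alpha\mathbf{II}$, so any smooth $s$ is achievable by a classical $O(h)$ corrector. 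Thus the difficulty you flag --- ``marrying the Lewicka--Pakzad construction\dots with the rest of the ansatz, which is built in the tangent-normal frame of the developable surface'' --- is not solved in the paper, it is \emph{circumvented}: there is no wrinkling on the curved part. What replaces it is a gluing across the interface $\partial\{\mathbf{II}=0\}$ (with cutoffs supported in a boundary layer of width $\epsilon(h)$, and a retardation argument to control $h\nabla^2$ of the wrinkling fields and of $Q g'_\epsilon\nabla y^T\nabla\nu$), not a gluing near $\partial\Omega$.

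Second, your claimed mode of convergence is insufficient. You ask only for $\mathrm{sym}\,\nabla\phi^h+\tfrac12\nabla\psi^h\otimes\nabla\psi^h\rightharpoonup\overline{B}$ \emph{weakly} in $L^2$. For an upper-bound (recovery) sequence this is not enough: weak convergence of the strain only gives a $\liminf$ inequality for the quadratic energy, not the equality you need. The paper's Lemma~\ref{lemmastrong} is exactly the tool: it requires $\mathrm{sym}(f^h/h)\to\mathrm{sym}\,f$ \emph{strongly} in $L^2$, and Proposition~3.2 of \cite{lewicka2017convex} indeed delivers \emph{uniform} ($C^0$) convergence of $\tfrac12\nabla v_n\otimes\nabla v_n+\nabla_{\mathrm{sym}}w_n$, which is what makes the energy converge rather than merely being bounded below. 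You should also note the positive-definiteness hypothesis of \cite{lewicka2017convex}: the target of the convex integration must be $\ge c\,\mathrm{Id}$, so the paper replaces $s$ by $\tilde s=s+K\,\mathrm{Id}$ and compensates by a factor $(1-Kh)$ in the affine part of the ansatz; this shift is not cosmetic and does not appear in your plan.

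Third, the decomposition $B=\overline{B}+B_0$ and the plan to ``superpose'' Proposition~\ref{propositiontheorem} for $B_0$ with a wrinkling corrector for $\overline{B}$ does not align with the quadratic structure of the limit. The limit functional minimizes $\int Q_2(s+x_3\,\mathbf{II}-B_{2\times2})$ over in-plane strains $s(x')$, and the role of wrinkling is to reach an \emph{arbitrary} optimal $s$ (after smooth approximation), not to cancel the specific field $\overline{B}$. Because $Q_2$ is quadratic, achieving the optimal energy with your decomposition would require the two contributions to the order-$h$ strain to add up to the \emph{single} optimal $s$, and the target of the wrinkling would then be $s$ minus whatever the classical corrector produces, not $\overline{B}$; your superposition argument glosses over this. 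Finally, for the approximation and gluing to work one also needs density lemmas --- smooth $s$ dense in $L^2$ for the minimization (Lemma~\ref{approximationbysmoothfunctions}), the refinement where $s$ is constant on a neighborhood of $\partial\{\mathbf{II}=0\}$ (Lemma~\ref{second_approx}), and the reduction of a general $W^{2,2}$ isometry to $\mathcal{A}_0$ via \cite{hornung2011approximation,hornung2011fine} --- none of which are visible in your sketch but without which the pieces cannot be combined.
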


In order to have finite energy at order $1,$ the limiting deformation must achieve the metric $\overline{A}^{2},$ hence it is obligated that the ansatz starts with a term $y(x')+hx_{3}b(x').$ However, unlike previous works our ansatz includes terms of order $h^{\frac{1}{2}}.$ This raises the question of whether it is possible to achieve the lower bound with an ansatz $y^{h}$ such that 
\begin{equation}\label{hcompactness}
\parallel y^{h}-\left( y(x')+hx_{3}b(x') \right) \parallel_{W^{1,2}} \leq Ch
\end{equation}
In section 6 we prove that this is not possible (in fact, we prove a slightly stronger result). By doing so, we show that relieving an arbitrary strain implies a deformation that blows up at the $h$ scale. 

These results all deal with a prestrain whose variation in $x_{3}$ is of order $h.$ In physical terms, this is reasonable since it means that the prestrain is of the same order as the thickness. In section 7 we investigate whether a sheet being in the bending regime, i.e.
\begin{equation}\label{bendingregime}
\limsup_{h \to \infty} \frac{1}{h^{2}} \mathcal{E}^{h}(y^{h}) < \infty
\end{equation} 
implies that $A^{h}(x',x_{3})=\overline{A}(x')+hB^{h}(x',x_{3}),$ where $B^{h}(x',x_{3})$ is bounded (in some $L^{p}$ norm). We show that this is not true, not even if the hypotheses are significantly strengthened. However, we show that \eqref{bendingregime} implies that 
\begin{equation}
\left( \left( A^{h} (x',x_{3}) \right)^{2} \right)_{2\times 2} \to \overline{A}^{2}(x').
\end{equation}
In other words, finite bending energy does not imply finite prestrain, but it does imply that the metric is thickness-independent to leading order. 

In the cases treated in Theorems \ref{Gammaconvergencetheorem1} and \ref{theorem2} the limiting energy is a quadratic form of the generalized second fundamental form $\nabla {y}^{T} \nabla {b},$ which can be written as the integral 
\begin{equation}\label{limitingquadraticform}
\frac{1}{2}\int_{\Omega^{1}} Q_{2}'(x', t\nabla {y}^{T} \nabla {b}-(\overline{A}B )_{2\times 2}, \overline{A}) dx',
\end{equation}
where $Q_{2}'$ is given by \eqref{definition of Q_2'}. In section 5 we show that problem \eqref{limitingquadraticform} can be simplified to a thickness independent quadratic form, and linear-in-thickness prestrain, in other words
\begin{multline}
\int_{\Omega^{1}} Q_{2}'(x', t\nabla {y}^{T} \nabla {b}-(\overline{A}B)_{2\times 2}, \overline{A}) dx' =\\
\int_{\Omega^{1}} Q_{2}^{*}(x', \nabla {y}^{T} \nabla {b}-(\overline{A}B)_{2\times 2}^{*}(x')) dx'+E(Q_{2}, B),
\end{multline}
with explicit expressions for $E(Q_{2}, B, \overline{A}), Q_{2}^{*}, B^{*}.$

\section{Proofs of Theorem 1 (and Proposition 2)}

\subsection{Compactness}

This section follows the work of \cite{lewicka2018dimension}. We will only write an outline of the main ideas, and refer to \cite{lewicka2011scaling} and \cite{bhattacharya2016plates} for the full argument. Using the fact that $\left| \overline{A}(x')\right|$ is bounded above and away from $0,$ we get that $\parallel \nabla^{h}y^{h} \parallel_{L^{2}} \leq K_{1}.$ In order to see this, note that since we are in the bending regime,
\begin{equation}
 \int_{\Omega^{1}} \mbox{dist}^{2}(\nabla^{h} y^{h}({A}^{h}(x))^{-1},\mbox{SO}(3)) dx  \leq C h^{2}.
\end{equation}
Hence, there exists a measurable rotation field $R(x): \Omega^{1} \to SO(3)$ such that
\begin{equation}
\int_{\Omega^{1}} \left| \nabla^{h}y^{h} (A^{h}(x))^{-1} -R(x)\right|^{2} dx \leq C h^{2}.
\end{equation}
Therefore
\begin{equation}
 \parallel \nabla^{h}y^{h} (A^{h}(x))^{-1} -R(x) \parallel_{L^{2}} \leq C h.
\end{equation}
Then 
\begin{equation}
 \parallel \nabla^{h}y^{h} (A^{h}(x))^{-1} \parallel_{L^{2}} \leq  \parallel R(x) \parallel_{L^{2}} + C h.
\end{equation}
This implies 
\begin{equation}
 \parallel \nabla^{h}y^{h} (A^{h}(x))^{-1} \parallel_{L^{2}} \leq k.
\end{equation}
Using the hypothesis that $\overline{A}$ is bounded above and away from $0,$ and that $B^{h}$ is uniformly bounded, we get for $h$ small enough that
\begin{equation}
 \parallel \nabla^{h}y^{h} \parallel_{L^{2}} \leq k.
\end{equation}
 We also have that $\left| B(x',x_{3}) \right|$ is uniformly bounded. Using this, along with triangle inequality we get.
 
\begin{equation} \label{bound1}
\begin{split}
 \int_{\Omega^{1}} \mbox{dist}^{2}(\nabla^{h} y^{h}(\overline{A}(x'))^{-1},\mbox{SO}(3)) dx &\leq   2\int_{\Omega^{1}} \mbox{dist}^{2}(\nabla^{h}y^{h}(\overline{A}(x'))^{-1},\nabla^{h}y^{h}({A}^{h}(x))^{-1})) dx\\
&+2\int_{\Omega^{1}}\mbox{dist}^{2}(\nabla^{h}y^{h}({A}^{h}(x))^{-1},SO(3))) dx \\
&\leq C \int_{\Omega^{1}} \parallel \nabla^{h}y^{h} \parallel_{L^{2}}^{2} \left( \max_{x \in \Omega^{1}} \mbox{dist}^{2}((\overline{A}(x'))^{-1},{A}^{h}(x))^{-1})\right)+\\
& \	\	\	\	C\mathcal{E}^{h}(y^{h}) \\
&\leq Ch^{2}+C\mathcal{E}^{h}(y^{h}),
\end{split}
\end{equation} 
hence the results in \cite{lewicka2011scaling} and \cite{bhattacharya2016plates} yield compactess for the desired limit: defining 
\begin{equation}
\widetilde{\mathcal{E}}(u^{h})=\frac{1}{h}\int_{\Omega^{h}} W(x, \nabla u^{h} \overline{A}(x'))
\end{equation}
we get by \eqref{bound1} that 
\begin{equation}
\widetilde{\mathcal{E}}(u^{h}) \leq C h^{2}
\end{equation}
and therefore Lemma 2.3 and Theorem 2.1 $(i)$ of \cite{bhattacharya2016plates} imply the result.

\begin{remark}
The same proof would hold if instead of considering $A^{h}(x)=\overline{A}(x')+h(B(x',x_{3})),$ we consider 
\begin{equation}
A^{h}(x)=\overline{A}(x')+hB^{h}(x',x_{3}),
\end{equation}
where $B^{h} \to B$ strongly in $L^{\infty}(\Omega^{1}).$
\end{remark}


\subsection{Lower bound}

Before giving the proof of the lower bound, we need a technical lemma:
\begin{lemma} \label{lemmaweak}
Let $f^{h} \in L^{2}(\Omega^{1}, \mathbf{R}^{3 \times 3} )$ be such that 
\begin{equation*}
\frac{f^{h}}{h} \rightharpoonup f,
\end{equation*}
weakly in $L^{2}$ then
\begin{equation} \label{eqlemma}
\frac{1}{h^{2}}\liminf \int_{\Omega^{1}} W(x,\mbox{Id}+f^{h}(x))dx \geq \frac{1}{2} \int_{\Omega^{1}} Q_{3}(x,f) dx
\end{equation}
\end{lemma}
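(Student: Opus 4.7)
The plan is to combine three standard ingredients: a pointwise Taylor expansion of $W$ about $\mathrm{Id}$, a truncation that confines attention to points where $f^h$ is small enough for the expansion to be useful, and weak lower semicontinuity of the non-negative quadratic form $F \mapsto \int_{\Omega^1} Q_3(x, F(x))\, dx$ on $L^2$.

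First I would observe that weak convergence $f^h/h \rightharpoonup f$ in $L^2$ gives a bound $\|f^h/h\|_{L^2} \leq M$, and hence $\|f^h\|_{L^2} \leq Mh$. Fixing $\alpha \in (0,1)$ (say $\alpha = 1/2$), set $G^h = \{x \in \Omega^1 : |f^h(x)| \leq h^\alpha\}$ with complement $B^h$. Chebyshev then gives $|B^h| \leq M^2 h^{2-2\alpha} \to 0$. A short argument using absolute continuity of the integral shows that multiplying a weakly convergent $L^2$ sequence by the characteristic functions $\chi_{G^h}$ preserves the weak limit, so $\chi_{G^h} f^h/h \rightharpoonup f$ in $L^2$.

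Next I would Taylor expand $W$ on $G^h$. Because $W \geq 0$ attains its minimum $0$ at $\mathrm{Id}$ (hypothesis (ii)), we have $W(x, \mathrm{Id}) = 0$ and $DW(x, \mathrm{Id}) = 0$, and $Q_3(x,\cdot) \geq 0$ as the Hessian at an unconstrained minimum. Combined with the uniform equicontinuity of $D^2 W$ on a neighborhood of $SO(3)$ (hypothesis (iv)), the integral form of Taylor's theorem produces a modulus $\eta$, independent of $x$, with $\eta(r) \to 0$ as $r \to 0^+$, such that
\begin{equation*}
W(x, \mathrm{Id} + F) \geq \tfrac{1}{2}\, Q_3(x, F) - \eta(|F|)\, |F|^2
\end{equation*}
for all $F$ with $\mathrm{Id} + F \in \mathcal{U}$. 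For $h$ small this applies pointwise on $G^h$, and since $W \geq 0$ on $B^h$ we obtain
\begin{equation*}
\frac{1}{h^2} \int_{\Omega^1} W(x, \mathrm{Id} + f^h)\, dx \;\geq\; \frac{1}{2} \int_{\Omega^1} \chi_{G^h}\, Q_3\!\left(x, \frac{f^h}{h}\right) dx \;-\; \eta(h^\alpha)\left\|\frac{f^h}{h}\right\|_{L^2}^2,
\end{equation*}
and the error term vanishes.

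Finally, the main term is controlled by weak lower semicontinuity. Hypothesis (iii) gives $Q_3(x, F) \leq C|F|^2$ uniformly in $x$ (obtained by dividing $W(x, \mathrm{Id} + sF) \leq C s^2 |F|^2$ by $s^2/2$ and sending $s \to 0$), so $F \mapsto \int_{\Omega^1} Q_3(x, F(x))\, dx$ is norm continuous on $L^2$; being a non-negative quadratic form it is convex, hence weakly lower semicontinuous. Applied to $\chi_{G^h} f^h/h \rightharpoonup f$ this yields
\begin{equation*}
\liminf_{h \to 0} \int_{\Omega^1} \chi_{G^h}\, Q_3\!\left(x, \frac{f^h}{h}\right) dx \;\geq\; \int_{\Omega^1} Q_3(x, f)\, dx,
\end{equation*}
and the lemma follows. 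There is no genuine obstacle; the only point to calibrate is the truncation exponent, but any $\alpha \in (0,1)$ simultaneously makes $\eta(h^\alpha) \to 0$ and $|B^h| \to 0$.
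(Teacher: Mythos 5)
Your argument is correct and follows essentially the same route as the paper's: truncate on the set where $f^h$ is $O(h^\alpha)$, Taylor-expand $W$ about $\mathrm{Id}$ with the error made uniform via hypothesis (iv), discard the bad set using $W\ge 0$, and finish with weak lower semicontinuity of the quadratic form applied to $\chi_{G^h} f^h/h \rightharpoonup f$. The only cosmetic difference is your parametrized cutoff $h^\alpha$ in place of the paper's fixed $h^{0.9}$, and your slightly more explicit justification that $Q_3(x,\cdot)\le C|\cdot|^2$ uniformly in $x$.
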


\begin{proof}
Let $F^{h}=\{x \in \Omega^{1}| f^{h}(x) \leq {h^{0.9}}\}.$ By hypothesis $ \limsup_{h} \parallel \frac{f^{h}}{h} \parallel_{L^{2}} < \infty,$ therefore $|\Omega^{1} \setminus F_{h}| \to 0,$ since 
\begin{equation}
\begin{split}
\parallel f^{h} \parallel_{L^{2}}^{2} &= \int_{\Omega^{1}} \left| f^{h} \right|^{2} dx \\
&\geq \int_{\Omega^{1} \setminus F^{h}} \left| f^{h} \right|^{2} dx\\
&\geq \int_{\Omega^{1} \setminus F^{h}} h^{1.8} dx\\
& = h^{1.8} |\Omega^{1}\setminus F^{h}|.
\end{split}
\end{equation}
Hence,
\begin{equation}
\frac{1}{h}\| {f^{h}} \|_{L^{2}} \geq h^{-0.1} \sqrt{|\Omega^{1}\setminus F^{h}|}.
\end{equation}
And so, we have that 
\begin{equation}
|\Omega^{1}\setminus F^{h}| \to 0.
\end{equation}
This implies 
\begin{equation}
\mathbf{1}_{F^{h}} \to \mathbf{1}_{\Omega^{1}}
\end{equation}
in $L_{2},$ since 
\begin{equation}
\begin{split}
\| \mathbf{1}_{F^{h}} - \mathbf{1}_{\Omega^{1}} \|_{L^{2}}^{2} &= |\Omega^{1}\setminus F^{h}| \\
 &\to 0.
\end{split} 
\end{equation}
By Taylor expanding $W(x, \cdot)$ at the identity, we have, for $x \in F^{h},$
\begin{equation}
\frac{1}{h^{2}}W(x,\mbox{Id}+f^{h}(x))= \frac{1}{2} Q_{3}\left(x, \frac{f^{h}}{h}(x) \right)+ \textit{o}(1) \left| \frac{f^{h}}{h}\right|^{2}.
\end{equation}

This is because,
\begin{equation}
\frac{1}{h^{2}}W(x,\mbox{Id}+f^{h}(x))= \frac{1}{2} Q_{3}\left(x, \frac{f^{h}}{h}(x) \right)+ \frac{1}{2} \left( D^{2}W(x, \xi)\left( \frac{f^{h}}{h} \right) - Q_{3}\left(x, \frac{f^{h}}{h}(x) \right) \right),
\end{equation}
where $\xi$ is in the line segment joining $Id$ and $Id + f^{h}.$ Since $W$ is $C^{2}$ in a neighborhood of the origin, and by hypothesis $|f^{h}(x)| \leq h^{0.9}$ in $F^{h}$ we have that
\begin{equation}
\left| D^{2}W(x, \xi)\left( \frac{f^{h}}{h} \right) - Q_{3}\left(x, \frac{f^{h}}{h}(x) \right) \right| = o(1) \left| \frac{f^{h}}{h} \right|^{2}
\end{equation} 
Using hypothesis iv), we can ensure that the error is $\textit{o}(1)$ uniformly in $x.$ Hence
\begin{equation}
\begin{split}
\frac{1}{h^{2}} \int_{\Omega^{1}} W(x,\mbox{Id}+f^{h}(x)) dx &\geq \frac{1}{h^{2}} \int_{F^{h}} W(x,\mbox{Id}+f^{h}(x)) dx\\
&= \frac{1}{2}\int_{F^{h}} Q_{3}\left(x, \frac{f^{h}(x)}{h} \right) dx+\parallel \frac{f^{h}}{h}\parallel_{L^{2}}^{2}o(1)\\
&= \frac{1}{2}\int_{\Omega^{1}} Q_{3}\left(x, \mathbf{1}_{F^{h}}\frac{f^{h}(x)}{h} \right) dx+\parallel \frac{f^{h}}{h}\parallel_{L^{2}}^{2}o(1).
\end{split}
\end{equation}
Now $\mathbf{1}_{F^{h}}\frac{f^{h}(x)}{h} \rightharpoonup f(x)$ by the weak-strong lemma, and $Q_{3}$ is lower semi continuous with respect to weak convergence, so we get \eqref{eqlemma}.

\end{proof}

We now prove the lower bound, for a general $\overline{A}.$

\begin{proof}

(Of Theorem \ref{Gammaconvergencetheorem1})

We begin with a compactness result for the re-normalized deformations. By \cite{bhattacharya2016plates}, \cite{lewicka2011scaling}, \cite{lewicka2018dimension} we have that there exists $SO(3)$ valued fields $R^{h}(x')$ such that
\begin{equation}\label{existRhfields}
 \int_{\Omega^{1}} \left| \nabla^{h}y^{h}\overline{A}^{-1}(x')-R^{h}(x')  \right|^{2} dx \leq Ch^{2}.
\end{equation}
We define the quantity $\overline{S}^{h}(x',x_{3})$ as
\begin{equation}\label{defoverlineSh}
\overline{S}^{h}(x',x_{3})=\frac{1}{h} \left( R^{h}(x')^{T}\nabla^{h}y^{h} \overline{A}^{-1}(x') -\mbox{Id} \right).
\end{equation}
Then, as $h \to 0$ we have that $\overline{S}^{h}(x',x_{3}) \rightharpoonup \overline{S}(x',x_{3})$ weakly in $L^{2},$ where $\overline{S}(x',x_{3})$ satisfies
\begin{equation}\label{defoverlineS}
\left( \overline{A}(x')\overline{S}(x',x_{3})\overline{A}(x') \right)_{2\times 2}=s(x')+x_{3}\nabla y(x')^{T} \nabla b(x'),
\end{equation}
where $b(x')$ is the Cosserat vector, for some $s \in L^{2}(\Omega, \mathbf{R}^{2 \times 2})$. We can define a similar quantity $S^{h}(x',x_{3})$ for the metric $A^{h}(x',x_{3})$ instead of $\overline{A}(x')$:
\begin{equation}\label{defSh}
{S}^{h}(x',x_{3})=\frac{1}{h} \left( R^{h}(x')^{T}\nabla^{h}y^{h}(A^{h})^{-1}(x',x_{3}) -\mbox{Id} \right).
\end{equation}
Then, as $h \to 0$ we have that ${S}^{h}(x',x_{3}) \rightharpoonup {S}(x',x_{3})$ weakly in $L^{2},$ where ${S}(x',x_{3})$ satisfies
\begin{equation}\label{defS}
\left( \overline{A}(x'){S}(x',x_{3})\overline{A}(x') \right)_{2\times 2}=s(x')+x_{3}\nabla y(x')^{T} \nabla b(x')-(\overline{A}B)_{2\times 2}.
\end{equation}
Let $d(x',t) \in L^{2}(\Omega^{1},\mathbf{R}^{3})$ be such that
\begin{multline}
\mbox{sym}\left( \overline{A}(x'){S}(x',x_{3})\overline{A}(x') \right)=\\
\mbox{sym} \left[ s(x')+x_{3}\nabla y(x')^{T} \nabla b(x')-(\overline{A}B)_{2\times 2}(x',x_{3}) +d(x',t)\otimes e_{3} \right] .
\end{multline}

Using frame indifference, we can write
\begin{equation}
\begin{split}
 \liminf \mathcal{E}^{h}(u^{h}) &=\liminf\frac{1}{h^{2}} \int_{\Omega^{1}} W(x,\nabla^{h}y^{h}(A^{h})^{-1}(x)) dx\\
&= \liminf\frac{1}{h^{2}} \int_{\Omega^{1}} W(x,(R^{h})^{T}\nabla^{h}y^{h}(A^{h})^{-1}(x)- \mbox{Id}+\mbox{Id}) dx\\
&\geq \frac{1}{2}\int_{\Omega^{1}} {Q}_{3}\Big( x,\overline{A}^{-1}(x')[s(x')+x_{3}\nabla y(x')^{T} \nabla b(x')-\\
&(\overline{A}B) + d(x',x_{3}) \otimes e_{3}]\overline{A}^{-1}(x') \Big)dx\\
&\geq \frac{1}{2}\int_{\Omega^{1}} Q_{2}'(x', t\nabla {y}^{T} \nabla {b}-(\overline{A}B )_{2\times 2}, \overline{A}) dx';
\end{split}
\end{equation}
where we have used Lemma \ref{lemmaweak} applied to $\frac{R^{h}(x')^{T}\nabla^{h} y^{h} \overline{A}(x')-Id}{h}$. 

\end{proof}

\begin{remark}
The same proof would hold if instead of considering $A^{h}(x)=\overline{A}(x')+h(B(x',x_{3})),$ we consider 
\begin{equation}
A^{h}(x)=\overline{A}(x')+hB^{h}(x',x_{3}),
\end{equation}
where $B^{h} \to B$ strongly in $L^{\infty}(\Omega^{1}),$ because
\begin{equation}
(\overline{A}(x')+hB^{h}(x))^{-1}=\overline{A}^{-1}(x')-h\overline{A}^{-1}(x')B^{h}(x)\overline{A}(x')^{-1}+\mathcal{O}(h^{2}),
\end{equation}
therefore
\begin{equation}
\frac{1}{h}\left( (\overline{A}(x')+hB^{h}(x))^{-1}-(\overline{A}^{-1}(x')-h\overline{A}^{-1}(x')B^{h}(x)\overline{A}(x')^{-1}) \right) \to 0
\end{equation}
in $L^{\infty}$ (and in $L^{2}$), then we still have that $S^{h} \rightharpoonup S,$ where $S$ satisfies \eqref{defoverlineS}.
\end{remark}

\subsection{Upper bound (Proposition 2)}

The ansatz is the same as found in \cite{bhattacharya2016plates}, since their proof can be easily adapted to the case $W=W(x, F)$ using hypotheses iv) and v). The ansatz takes the form
\begin{equation}
y^{h}(x',x_{3}) = y(x')+ h x_{3}b(x')+ h^{2}D^{h}(x',x_{3}),
\end{equation}
where 
\begin{equation}
D^{h} (x',x_{3}) = \int_{0}^{x_{3}} d^{h}(x',t)\, dt 
\end{equation}
and $d^{h}$ is an $h-$dependent mollification of $d(x',t),$ where 
\begin{equation}
d(x',t) = \min_{d \in \mathbf{R}^{3}} Q_{3}\left(x',t,\overline{A}^{-1}(x')[t\nabla y^{T} \nabla b-\overline{A}B + d \otimes e_{3}]\overline{A}^{-1}(x') \right).
\end{equation}

We omit the proof and refer the reader to \cite{bhattacharya2016plates} Theorem 3.1.

\section{Proof of Theorem 3 }

From now on, we assume that $\overline{A}(x')=Id,$ which implies ${b}(x')=\nu(x'),$ where $\nu(x')$ is the unit normal. In this case $\nabla y^{T} \nabla \nu$ is the second fundamental form of the surface parametrized by $y,$ we therefore write $\mathbf{II}=\nabla y^{T} \nabla \nu.$ In order to prove theorem 3, we must provide an ansatz whose energy matches that of the lower bound. We will split the proof into three parts: the case $\mathbf{II}=0,$ the case $\mathbf{II}$ bounded away from $0,$ and the general case. The case $\mathbf{II}=0$ will involve a highly oscilatory ansatz, that resembles the Nash-Kuiper embedding. More precisely, it is basically the ansatz used in \cite{friesecke2006hierarchy} with different powers of $h.$ The proof uses recent results about the Monge-Ampere equation \cite{lewicka2017convex}. The case $\mathbf{II}$ bounded away from $0$ involves an essentially different ansatz, in which in-plane and out-of-plane strain combine to relieve the residual metric. Finally, the general case involves combining the two constructions.

First we deal with the case $\mathbf{II}=0.$ The lower bound implies an optimal $s,$ which in general has only $L^{2}$ regularity. We need to approximate it by $C_{0}^{\infty}$ functions. This is done in Lemma \ref{approximationbysmoothfunctions}. Lemma \ref{wrinklesapproximatestrain} is a convex integration-type result which is at the heart of the construction of the ansatz. Lemma \ref{lemmastrong} essentially justifies a Taylor expansion for the energy. Lemma \ref{boundond} is technical: it states that a quantity of interest is uniformly bounded. Building on previous lemmas, Lemma \ref{lastlemma} states that it is possible to construct an ansatz with arbitrary nonlinear strain in the case $\mathbf{II}=0.$ The proof of the upper bound in the case $\mathbf{II}=0$ is a consequence of these lemmas. 
 
From now on, in order to ease notation, we write $Q_{2}(x, F, \text{Id}) = Q_{2}(x, F).$
\begin{lemma}\label{approximationbysmoothfunctions}
For any $H \in L^{2}(\Omega,\mathbf{R}^{2\times 2} )$ we have that 
\begin{equation}
\begin{split}
&\min_{s \in L^{2}(\Omega,\mathbf{R}^{2\times 2}),d \in L^{2}(\Omega^{1}, \mathbf{R}^{3})}\int_{\Omega^{1}} {Q}_{3}\Big( x,[s(x')+x_{3}H(x')-B(x',x_{3}) + d(x',x_{3}) \otimes e_{3}] \Big)dx\\
=&\inf_{s \in C_{0}^{\infty}(\Omega,\mathbf{R}^{2\times 2}),d \in L^{2}(\Omega^{1}, \mathbf{R}^{3})}\int_{\Omega^{1}} {Q}_{3}\Big( x,[s(x')+x_{3}H(x')-B(x',x_{3}) + d(x',x_{3}) \otimes e_{3}] \Big)dx
\end{split}
\end{equation}
\end{lemma}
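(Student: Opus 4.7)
The inequality LHS $\leq$ RHS is immediate, since $C_0^\infty(\Omega, M_{2 \times 2}) \subset L^2(\Omega, M_{2\times 2})$ and enlarging the class of admissible $s$ can only decrease the infimum. For the reverse direction, my plan is to combine density of $C_0^\infty$ in $L^2$ with continuity of the functional in the $L^2$-topology in the variable $s$ (with $d$ held fixed).

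In detail, I would fix $\epsilon>0$, select $(s^{*}, d^{*}) \in L^{2}(\Omega, M_{2\times 2}) \times L^{2}(\Omega^{1}, \mathbf{R}^{3})$ with $F(s^{*}, d^{*}) \leq \mathrm{LHS} + \epsilon$, where $F$ denotes the functional being minimized. The $\epsilon$-slack sidesteps any need to verify existence of a genuine minimizer, although one exists by convexity and weak lower semicontinuity. Next, pick $s_{n} \in C_0^{\infty}(\Omega, M_{2\times 2})$ with $s_{n} \to s^{*}$ in $L^{2}$, and hold $d = d^{*}$ fixed. The key step is then to show that $F(s_{n}, d^{*}) \to F(s^{*}, d^{*})$. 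Since $\mathcal{Q}_3(x, \cdot) = \langle L(x) \cdot, \cdot\rangle$ is a quadratic form whose coefficients $L(x)$ are uniformly bounded in $x$ (a consequence of the upper bound $W(x,F) \leq C\,\mathrm{dist}^{2}(F, SO(3))$ in hypothesis iii, giving $\|D^{2}W(x,\mathrm{Id})\| \leq C$), I would expand $\mathcal{Q}_3(x, M^{*}(x) + (s_{n}(x') - s^{*}(x')))$ bilinearly, where $M^{*}(x) = s^{*}(x') + x_{3}H(x') - \overline{A}(x')B(x',x_{3}) + d^{*}(x',x_{3})\otimes e_{3}$ denotes the full argument at $(s^{*}, d^{*})$. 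By Cauchy--Schwarz this bounds
$|F(s_{n}, d^{*}) - F(s^{*}, d^{*})|$ by $C\|M^{*}\|_{L^{2}(\Omega^{1})}\|s_{n}-s^{*}\|_{L^{2}(\Omega)} + C\|s_{n}-s^{*}\|_{L^{2}(\Omega)}^{2}$,
both of which vanish as $n \to \infty$.

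Combining, $\mathrm{RHS} \leq F(s_{n}, d^{*}) \to F(s^{*}, d^{*}) \leq \mathrm{LHS} + \epsilon$, and letting $\epsilon \to 0$ yields $\mathrm{RHS} \leq \mathrm{LHS}$, hence equality. I do not anticipate any serious obstacle: the lemma is essentially a density/continuity statement, and the quadratic structure with uniformly bounded coefficients makes the required continuity in $s$ entirely routine. The only minor subtlety is that the variable $d$ is kept fixed throughout the approximation, which is legitimate because the two variables are independent in both infimization problems.
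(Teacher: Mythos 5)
Your proof is correct and is essentially the same argument as the paper's: density of $C_0^\infty$ in $L^2$, plus continuity of the quadratic functional in the $L^2$-topology for $s$. The only cosmetic difference is that the paper first optimizes out $d$ pointwise to reduce the integrand from $\mathcal{Q}_3$ to the two-dimensional form $\mathcal{Q}_2$ (using that the optimal $c_{\min}(X)$ depends linearly on $X$), and then does the $s_n \to s$ continuity argument at the level of $\mathcal{Q}_2$ via a completing-the-square identity; you instead hold $d=d^*$ fixed and do the continuity argument directly at the level of $\mathcal{Q}_3$, which is equally valid since the infimum over $d$ can only decrease. One small imprecision: the uniform bound on $D^2W(x,\mathrm{Id})$ that you invoke comes from hypothesis (iv) (the $C^2$ regularity and uniform equicontinuity near $SO(3)$) together with (iii), not from (iii) alone; this does not affect the argument.
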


\begin{proof}
Let $s \in L^{2}(\Omega,\mathbf{R}^{2\times 2})$ and $s_{n} \in C_{0}^{\infty}(\Omega, \mathbf{R}^{2 \times 2})$ such that
\begin{equation}
\parallel s_{n} - s \parallel_{L_{2}} \to 0.
\end{equation}


 Note that the form $Q_{2}$ is bilinear, and hence there is a tensor $L_{2}$ such that
\begin{equation}
Q_{2}(x',t,X, )=\langle L_{2}(x',t)F,F \rangle.
\end{equation}
Using the symmetry of $L_{3}$, we can use a completing squares argument and deduce (writing $\Psi=x_{3}H(x')-B(x',x_{3}$)
\begin{equation}
\begin{split}
&\int_{\Omega^{1}} {Q}_{2}\Big( x,[s(x')+\Psi] \Big) -\int_{\Omega^{1}} {Q}_{2}\Big( x,[s_{n}(x')+\Psi] \Big)\\
&=\int_{\Omega^{1}} \langle L_{2}(x',t)(s_{n}-s), [s_{n}(x')+s(x')+2\Psi] \rangle dx\\
&\to 0.
\end{split}
\end{equation}
\end{proof}

Next is a lemma which is the technical foundation of the convex-integration construction used in our ansatz.
\begin{lemma}\label{wrinklesapproximatestrain}
Let $h_{n} \to 0$ and let $A \in C^{\infty}(\overline{\Omega}, \mathbf{R}^{2\times 2}_{sym})$ be such that there exists $c \in \mathbf{R}^{+}$ with 
\begin{equation}
A(x) \geq c \mbox{Id}_{2\times 2},
\end{equation}
then there exists $v_{n} \in C^{\infty}(\overline{\Omega})$ and $w_{n} \in C^{\infty}(\overline{\Omega}, \mathbf{R}^{2})$ such that
\begin{equation}\label{cond1}
\parallel A-\left( \frac{1}{2} \nabla v_{n} \otimes \nabla v_{n} + \nabla_{sym} w_{n} \right) \parallel_{C^{0}} \to 0
\end{equation}
and
\begin{equation}\label{cond2}
\begin{split}
\parallel w_{n} \parallel_{C^{0}}  &\to 0 \mbox{ monotonically }\\
\parallel v_{n} \parallel_{C^{0}}  &\to 0 \mbox{ monotonically. }
\end{split}
\end{equation}
Moreover, $v$ and $w$ can be chosen to satisfy 
\begin{equation}\label{cond3}
\limsup \left\{ \parallel \nabla v_{n} \parallel_{C^{0}}+\parallel \nabla w_{n} \parallel_{C^{0}} \right\} < \infty
\end{equation}
and
\begin{equation} \label{cond4}
\parallel h_{n}^{\frac{1}{2}}\nabla^{2}v_{n}\parallel_{C^{0}}+\parallel h_{n}\nabla^{2}w_{n}\parallel_{C^{0}} \to 0.
\end{equation}
\end{lemma}

\begin{proof}
The existence of $v_{n}, w_{n}$ satisfying \eqref{cond1} and \eqref{cond3} follows from proposition 3.2 of \cite{lewicka2017convex}.This reference also shows
\begin{equation}
\begin{split}
\parallel v_{n} \parallel_{C^{0}} &\to 0\\
\parallel w_{n} \parallel_{C^{0}} &\to 0,
\end{split}
\end{equation}
therefore for a subsequence we have that the convergence is monotonic and therefore \eqref{cond2} holds. 

To construct a sequence such that \eqref{cond4} also holds, we start with a sequence satisfying \eqref{cond1}-\eqref{cond3}, and apply a retardation argument as in \cite{padilla2019asymptotic}: define a function $\sigma(n)$ as 
\begin{equation*}
\sigma(1)=1
\end{equation*}
and
\begin{equation*}
\sigma(n+1)=
\begin{cases}
\sigma(n)+1 \mbox{ if } \parallel h_{n}^{\frac{1}{2}}\nabla^{2}v_{\sigma(n)+1}\parallel_{C^{0}}+\parallel h_{n}\nabla^{2}w_{\sigma(n)+1}\parallel_{C^{0}}\leq \frac{1}{\sigma(n)+1}  \\
\sigma(n) \mbox{ if not}.
\end{cases}
\end{equation*}

It is easy to check that $\sigma(n) \to \infty,$ (if not, then $\sigma(n) =k $ for all $n$ big enough, but $h_{n} \to 0,$ therefore there exists $n_{0}$ such that $\parallel h_{n_{0}}^{\frac{1}{2}}\nabla^{2}v_{k+1}\parallel_{C^{0}}+\parallel h_{n_{0}}\nabla^{2}w_{k+1}\parallel_{C^{0}}\leq \frac{1}{k+1}$, therefore $\sigma(n_{0}+1)=k+1,$ contradiction). Then, by definition,
\begin{equation}
\parallel h_{n}^{\frac{1}{2}}\nabla^{2}v_{\sigma(n)+1}\parallel_{C^{0}}+\parallel h_{n}\nabla^{2}w_{\sigma(n)+1}\parallel_{C^{0}} \to 0,
\end{equation}
and \eqref{cond1}-\eqref{cond4} hold with $v_{n}, w_{n}$ replaced by $v_{\sigma(n)}, w_{\sigma(n)}$.
\end{proof}

Before we continue, we need a short lemma, which is the analogue of lemma \ref{lemmaweak} for strong convergence.

\begin{lemma} \label{lemmastrong}
Let $f^{h} \in L^{\infty}(\Omega^{1},  \mathbf{R}^{3 \times 3})$ be such that 
\begin{equation*}
\limsup \frac{1}{h} \parallel f^{h} \parallel_{L^{\infty}} < \infty  \quad \mbox{and} \quad \mbox{sym}\left( \frac{f^{h}}{h} \right) \to \mbox{sym}\left( f \right) \mbox{ in }L^{2},
\end{equation*}
with $f \in C^{\infty}(\Omega^{1}, \mathbf{R}^{3 \times 3})$ then
\begin{equation} \label{eqlemmastrong}
\frac{1}{h^{2}}\lim \int_{\Omega^{1}} W(x,\mbox{Id}+f^{h}(x))dx = \frac{1}{2} \int_{\Omega^{1}} Q_{3}(x,f) dx
\end{equation}
\end{lemma}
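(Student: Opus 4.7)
The plan is to mimic the proof of Lemma \ref{lemmaweak} but exploit the stronger hypothesis to convert the lower semicontinuity inequality into an equality. First I would note that weak$^*$ convergence of $f^h/h$ in $L^\infty$ implies $\|f^h/h\|_{L^\infty}\leq K$ by Banach--Steinhaus, so $\|f^h\|_{L^\infty}\leq Kh\to 0.$ In particular, for $h$ small enough, $\mbox{Id}+f^h(x)$ lies uniformly in the neighborhood $\mathcal{U}$ where $W(x,\cdot)$ is $C^2$, so the whole domain plays the role of the set $F^h$ in Lemma \ref{lemmaweak} (no truncation is needed).

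Next I would Taylor expand $W(x,\cdot)$ around $\mbox{Id}$, using that $W(x,\mbox{Id})=0$ and $DW(x,\mbox{Id})=0$ (since $\mbox{Id}$ is a minimizer by hypothesis ii), to write pointwise
\begin{equation*}
W(x,\mbox{Id}+f^h(x))=\tfrac{1}{2}D^2W(x,\xi^h(x))(f^h(x),f^h(x))
\end{equation*}
for some $\xi^h(x)$ on the segment between $\mbox{Id}$ and $\mbox{Id}+f^h(x)$. By hypothesis iv (equicontinuity of $D^2W$) together with the uniform smallness $\|\xi^h-\mbox{Id}\|\leq Kh$, I get $D^2W(x,\xi^h(x))=D^2W(x,\mbox{Id})+o(1)$ uniformly in $x$, so
\begin{equation*}
\tfrac{1}{h^2}W(x,\mbox{Id}+f^h(x))=\tfrac{1}{2}\mathcal{Q}_3\!\left(x,\tfrac{f^h(x)}{h}\right)+o(1)\left|\tfrac{f^h(x)}{h}\right|^2
\end{equation*}
with the $o(1)$ independent of $x$. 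Integrating over $\Omega^1$ and using that $\|f^h/h\|_{L^2}$ is bounded (by the $L^\infty$ bound and finiteness of $|\Omega^1|$), the error term contributes $o(1)$.

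It remains to pass to the limit in $\int_{\Omega^1}\mathcal{Q}_3(x,f^h/h)\,dx$. Here I would invoke the standard consequence of frame indifference (hypothesis i) together with the minimality on $SO(3)$: the quadratic form $\mathcal{Q}_3(x,\cdot)$ depends only on the symmetric part of its argument, i.e.\ $\mathcal{Q}_3(x,F)=\mathcal{Q}_3(x,\mbox{sym}\,F)$. Therefore
\begin{equation*}
\int_{\Omega^1}\mathcal{Q}_3\!\left(x,\tfrac{f^h}{h}\right)dx=\int_{\Omega^1}\langle L(x)\,\mbox{sym}(f^h/h),\mbox{sym}(f^h/h)\rangle\,dx,
\end{equation*}
and the strong $L^2$ convergence $\mbox{sym}(f^h/h)\to\mbox{sym}(f)$ combined with the uniform $L^\infty$ bound on $L(x)$ lets me pass to the limit in this bilinear pairing, again using frame indifference in the reverse direction to identify the limit with $\int_{\Omega^1}\mathcal{Q}_3(x,f)\,dx$.

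The main technical point to be careful with is the uniformity of the remainder in the Taylor expansion: this is exactly where hypothesis iv does its work, and it is crucial that $\|f^h\|_{L^\infty}\to 0$ (rather than merely in $L^2$) so that the mean-value matrix $\xi^h$ stays in $\mathcal{U}$ uniformly in $x$. The rest of the argument is essentially elementary, the replacement of weak convergence of $f^h/h$ (which only gave lower semicontinuity in Lemma \ref{lemmaweak}) by strong $L^2$ convergence of its symmetric part (which gives continuity of the quadratic form) being precisely what converts the inequality into an equality.
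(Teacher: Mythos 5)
Your proof is correct and follows the same essential route as the paper's: Taylor expansion of $W(x,\cdot)$ at $\mbox{Id}$ with a remainder made uniform in $x$ by hypothesis iv, followed by passage to the limit using that $\mathcal{Q}_3(x,\cdot)$ depends only on the symmetric part of its argument (a consequence of frame indifference and minimality on $SO(3)$) together with the strong $L^2$ convergence of $\mbox{sym}(f^h/h)$. You additionally observe, correctly, that the weak-* $L^\infty$ hypothesis forces $\|f^h\|_{L^\infty}\leq Kh$, so the truncation set $S^h=\{|f^h|\leq h^{0.9}\}$ that the paper carries over from Lemma \ref{lemmaweak} equals $\Omega^1$ for $h$ small; this lets you drop the separate estimates on $\Xi^h=\Omega^1\setminus S^h$, which the paper retains but which are not actually needed under the stated hypotheses.
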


\begin{proof}
The proof is similar to that of  Lemma \ref{lemmaweak}. Let $S^{h}=\{x \in \Omega^{1}| f^{h}(x) \leq {h^{0.9}}\}.$ Proceeding as in Lemma \ref{lemmaweak}, we have 
\begin{equation}
\mathbf{1}_{S^{h}} \to \mathbf{1}_{\Omega^{1}}.
\end{equation}
Proceeding again as in Lemma \ref{lemmaweak}, we have  for $x \in S^{h},$
\begin{equation}
\frac{1}{h^{2}}W(x,\mbox{Id}+f^{h}(x))=\frac{1}{2} Q_{3}(x,\frac{f^{h}}{h}(x))+\textit{o}(1)\left| \frac{f^{h}}{h} \right|^{2}.
\end{equation}

Let $\Xi_{h}=\Omega^{1} \setminus S^{h}.$ Using the fact that the the tangent space to $SO(3)$ at the identity is the space of antisymmetric matrices, we have
\begin{equation}
\mbox{dist}(Id+f^{h}, SO(3))=\left| \mbox{sym}f^{h} \right|+O(1)\left| f^{h} \right|^{2},
\end{equation}
so 
\begin{equation}
\begin{split}
\frac{1}{h^{2}} \int_{\Xi^{h}} W(x,Id+f^{h}(x))dx &\leq \frac{C}{h^{2}} \int_{\Xi^{h}} \mbox{dist}^{2}(Id+f^{h}, SO(3))\\
&\leq \frac{C}{h^{2}} \int_{\Xi^{h}} |\mbox{sym}f^{h}|^{2}+|\mbox{sym}f^{h}||f^{h}|^{2}+|f^{h}|^{4}\\
 &\leq C \int_{\Xi^{h}}\left|\mbox{sym} \left( \frac{f^{h}(x)}{h} \right) \right|^{2} dx+Ch\\
\end{split}
\end{equation}
We also have
\begin{equation}
\begin{split}
\int_{\Omega^{1}} \left|\mbox{sym} \left( \frac{f^{h}(x)}{h} \right) \right|^{2} \mathbf{1}_{\Xi^{h}}dx &= \int_{\Omega^{1}} \left|\mbox{sym} \left( \frac{f^{h}(x)}{h} \right)-\mbox{sym}f(x)+\mbox{sym}f(x)\right|^{2} \mathbf{1}_{\Xi^{h}}dx\\
&= \int_{\Omega^{1}} \left[ \left| \mbox{sym} \left( \frac{f^{h}}{h}-f\right)\right|^{2}+2\langle \mbox{sym}\left( \frac{f^{h}}{h}-f \right),\mbox{sym}f \rangle +\left| \mbox{sym} \left( f \right) \right| ^{2} \right] \mathbf{1}_{\Xi^{h}}dx\\
&\to 0.
\end{split} 
\end{equation}
The first term tends to $0$ by definition, the second by Cauchy-Schwartz, the third one by dominated convergence.

Similarly, we have that
\begin{equation}
\begin{split}
\int_{\Omega^{1}} Q_{3}\left(x,\frac{f^{h}}{h} \right) \mathbf{1}_{\Xi^{h}} dx &\leq k \int_{\Omega^{1}} \left| \mbox{sym}\frac{f^{h}}{h}\right|^{2} \mathbf{1}_{\Xi^{h}} dx \\
&\to 0.
\end{split}
\end{equation}

Hence
\begin{equation}
\begin{split}
\frac{1}{h^{2}} \int_{\Omega^{1}} W(x,\mbox{Id}+f^{h}(x)) dx &= \frac{1}{h^{2}} \left( \int_{S^{h}} W(x,\mbox{Id}+f^{h}(x)) dx+\int_{\Xi^{h}} W(x,\mbox{Id}+f^{h}(x)) dx \right) \\
&= \frac{1}{2} \int_{S^{h}} Q_{3} \left(x,\frac{f^{h}(x)}{h} \right) dx+|S^{h}|o(1)+\frac{1}{h^{2}}\int_{\Xi^{h}} W(x,\mbox{Id}+f^{h}(x)) dx\\
&\to \frac{1}{2} \int_{\Omega^{1}} Q_{3}(x,f(x)) dx.
\end{split}
\end{equation}

\end{proof}


One last observation before writing down the ansatz is that $d(x',x_{3})$ is uniformly bounded if $s(x')$ is uniformly bounded (in particular, if $s(x') \in C^{\infty}(\overline{\Omega})$). This will be necessary in order to bound the error. 

\begin{lemma}\label{boundond}

Let $d(x',t)$ be such that 
\begin{equation}
\begin{split}
&Q_{3}\left(x, [s(x')-B(x',x_{3})+d(x',t)\otimes e_{3}] \right)\\
=&Q_{2}\left(x, \left([s(x')-B_{2\times 2}(x',x_{3})]\right)_{2\times 2} \right).
\end{split}
\end{equation}
Then $d \in L^{\infty}(\Omega^{1}, \mathbf{R}^{3})$ and we have the pointwise bound
\begin{equation}\label{Linftyboundond}
| d(x',x_{3}) | \leq \left( \sqrt{\frac{C}{c}} + 1\right)  \parallel s-B \parallel_{L^{\infty}} ,
\end{equation}
where $c,C$ are such that
\begin{equation}
c \parallel F \parallel^{2} \leq Q_{3}(x,F) \leq C \parallel F \parallel^{2} 
\end{equation}
for any symmetric $F$, see properties iv), v) of $W$ .
\end{lemma}

\begin{proof}

 To deduce equation \eqref{Linftyboundond}, note that we can write
\begin{equation}
\begin{split}
Q_{3}\left( x, s-B+d\otimes e_{3} \right) &\geq c \left| s(x')-B(x',x_{3})+\mbox{sym}\left(d(x',x_{3})\otimes e_{3} \right) \right|^{2}\\
&\geq c \left( |d(x',x_{3})| - \parallel s- B \parallel_{L^{\infty}} \right)^{2}.
\end{split}
\end{equation}
We also have
\begin{equation}
\begin{split}
Q_{2}\left( x, [s-B]_{2\times 2} \right) &\leq Q_{3}\left( x, s-B \right)\\
& \leq C \parallel s-B \parallel_{L^{\infty}}^{2}.
\end{split}
\end{equation}
Combining these two estimates yields \eqref{Linftyboundond}.

\end{proof}

Since we require that the ansatz is in $W^{1,2},$ we cannot exactly plug in $d(x',x_{3})$ since in general it has only $L^{2}$ regularity. Instead, we need a suitable smooth approximation: let $d^{h}(x',t)$ be a sequence of $C^{\infty}$ functions that converge to $d(x',t),$ strongly in $L^{2}$ and such that $h\nabla' d^{h}(x',t)$ converges strongly to $0$ in $L^{\infty}.$ For example, take the trivial extension of $d$ to $\mathbf{R}^{3},$ and take 
\begin{equation}\label{defofdh}
d^{h}=d \ast \mu_{h^{\frac{1}{2}}},
\end{equation}
then by Young's inequality $\parallel h\nabla' d^{h}(x',t) \parallel_{L^{\infty}} \leq k h^{\frac{1}{2}} \to 0.$ Let $D^{h}(x',t)=\int_{0}^{t} d^{h}(x',s) ds.$

We are finally ready to write down the ansatz that achieves the lower bound: the ansatz is $y^{h}$ defined as 
\begin{equation}\label{ansatz}
\begin{split}
y^{h}(x',x_{3})=&(1-hC)y(x')+h[x_{3}(1-Ch)\nu(x')+Q(x')\begin{bmatrix}
w_{1}^{h}\\
w_{2}^{h}\\
0
\end{bmatrix}
+h^{2}QD^{h}(x',x_{3})\\
&+h^{\frac{1}{2}} Q\begin{bmatrix}
0\\
0\\ 
v^{h}
\end{bmatrix}-h^{\frac{3}{2}} x_{3} Q\nabla v^{h}-\frac{1}{2}h^{2}x_{3}Q \begin{bmatrix}
0\\
0\\
|\nabla v^{h}|^{2}
\end{bmatrix}
\end{split}
\end{equation}
where $Q=[\nabla y | {\nu}]$ and $v^{h}, w^{h}, C$ will be chosen later. An explicit calculation shows that
\begin{equation}
\begin{split}
\nabla^{h}y^{h}=&(1-Ch)Q+h[Q\nabla'w^{h}+Qd^{h}(x',x_{3}) \otimes e_{3}]+h^{2}\nabla'QD^{h}(x',x_{3})\\
&+h^{\frac{1}{2}}Q\begin{bmatrix}
0_{2\times 2}& -\nabla (v^{h})^{T}\\
\nabla v^{h} & 0
\end{bmatrix}
-h^{\frac{3}{2}} x_{3}Q\nabla^{2}v^{h}\\
&-\frac{1}{2}hQ\begin{bmatrix}
0\\
0\\
|\nabla v^{h}|^{2}
\end{bmatrix} \otimes e_{3}-\frac{1}{2}h^{2}x_{3}Q\begin{bmatrix}
0&0&0\\
0&0&0\\
\partial_{x_{1}}|\nabla v^{h}|^{2}&\partial_{x_{2}}|\nabla v^{h}|^{2}&0
\end{bmatrix}.
\end{split}
\end{equation}
Recall that by hypothesis $\mathbf{II}=0,$ which means $y(x')$ is a plane. We also have
\begin{equation}
\begin{split}
(\nabla^{h}y^{h})^{T}\nabla^{h}y^{h}=&(1-2Ch)Id+2h[\nabla_{sym}(w^{h})+\mbox{sym}(d^{h}(x',x_{3})\otimes e_{3})]\\
&+h\begin{bmatrix}
\nabla v^{h} \otimes \nabla v^{h}&0\\
0& 0
\end{bmatrix} +\mathcal{O}(h^{\frac{3}{2}})(1+\parallel \nabla^{2}v^{h} \parallel_{L^{\infty}}).
\end{split}
\end{equation}
Here, to bound the error, we have used that 
\begin{equation}
\parallel Q\nabla'w^{h} \parallel_{L^{\infty}}+\parallel Qd^{h}(x',x_{3}) \parallel_{L^{\infty}}+\parallel h^{\frac{1}{2}}\nabla'QD^{h}(x',x_{3}) \parallel_{L^{\infty}}+\parallel \nabla v^{h} \parallel_{L^{\infty}} \leq C.
\end{equation}
We have also used that 
\begin{equation}
\begin{bmatrix}
0_{2\times 2}& -\nabla (v^{h})^{T}\\
\nabla v^{h} & 0
\end{bmatrix}^{T}
\begin{bmatrix}
0_{2\times 2}& -\nabla (v^{h})^{T}\\
\nabla v^{h} & 0
\end{bmatrix}
-\begin{bmatrix}
0\\
0\\
|\nabla v^{h}|^{2}
\end{bmatrix} 
=
\begin{bmatrix}
\nabla v^{h} \otimes \nabla v^{h}&0\\
0& 0
\end{bmatrix}.
\end{equation}


We need one more lemma to conclude:

\begin{lemma}\label{lastlemma}
For any $s \in C^{\infty}(\Omega, \mathbf{R}^{2\times 2}),$ and $d\in L^{\infty}(\Omega_{1} \to \mathbf{R}^{3})$ there exists $y^{h}$ given by \eqref{ansatz}, such that 
\begin{equation}
\mbox{sym}\left( \overline{S}(x',x_{3})\right) = \mbox{sym}\left(s(x')+d(x',x_{3})\otimes e_{3}\right),
\end{equation}
$ \limsup \parallel S^{h} \parallel_{L^{\infty}} <\infty $, and $\mbox{sym}(\overline{S}^{h}) \to \ \mbox{sym}(\overline{S})$ strongly in $L^{2}$, where $\overline{S}^{h}(x',x_{3}), S^{h}(x',x_{3}), \overline{S}(x',x_{3})$ and $S(x',x_{3})$ are given by \eqref{defoverlineSh}- \eqref{defS}, with $\overline{A}=Id$ and $R^{h}$ given by \eqref{existRhfields}.
\end{lemma}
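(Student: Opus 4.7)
The strategy is to build $y^h$ in the form \eqref{ansatz} by making three explicit choices: a constant $C>0$, a pair $(v^h, w^h)$ from the convex integration Lemma 5, and a smooth sequence $d^h$. Given $s \in C^{\infty}(\Omega, M_{2\times 2})$, I would first pick $C$ large enough so that $A(x'):=C\mbox{Id}_{2\times 2}+\mbox{sym}(s(x'))$ is bounded below by a positive multiple of the identity on $\overline{\Omega}$; this is possible because $s$ is continuous on a compact set. Applying Lemma 5 to this $A$ with the prescribed sequence $h_n\to 0$ produces $v^h, w^h$ satisfying the strong $C^0$ approximation \eqref{cond1}, the gradient bound \eqref{cond3}, and the second-derivative smallness \eqref{cond4}. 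Given $d\in L^{\infty}(\Omega^1,\mathbf{R}^3)$, I would define $d^h$ via the mollification \eqref{defofdh} applied to $d+Ce_3$ rather than $d$, in order to absorb the $-C$ which will appear in the $(3,3)$-entry from the $(1-hC)$ factor in \eqref{ansatz}; this choice retains $d^h\to d+Ce_3$ strongly in $L^2$ and $h\nabla' d^h \to 0$ in $L^{\infty}$.

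Plugging these ingredients into \eqref{ansatz}, the expansion of $(\nabla^h y^h)^T \nabla^h y^h$ already carried out in the excerpt reads
\begin{equation*}
(\nabla^h y^h)^T \nabla^h y^h = (1-2Ch)\mbox{Id} + 2h\bigl[\nabla_{\mbox{sym}}w^h + \mbox{sym}(d^h\otimes e_3)\bigr] + h\begin{bmatrix}\nabla v^h\otimes\nabla v^h & 0\\ 0 & 0\end{bmatrix} + \mathcal O(h^{3/2})(1+\|\nabla^2 v^h\|_{L^{\infty}}).
\end{equation*}
The polar-decomposition identity $(\nabla^h y^h)^T \nabla^h y^h = \mbox{Id}+2h\,\mbox{sym}(\overline S^h)+h^2(\overline S^h)^T\overline S^h$, combined with the uniform gradient bounds from \eqref{cond3}, yields that $\overline S^h$ is bounded in $L^{\infty}$; subtracting the identity and dividing by $2h$ then gives an explicit formula for $\mbox{sym}(\overline S^h)$ with error going to $0$ in $C^0$ by \eqref{cond4}. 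Using \eqref{cond1} to replace $\nabla_{\mbox{sym}}w^h+\tfrac12 \nabla v^h\otimes \nabla v^h$ by its limit $A=C\mbox{Id}_{2\times 2}+\mbox{sym}(s)$, together with the strong $L^2$ convergence $d^h\to d+Ce_3$ in the transverse slot, I obtain
\begin{equation*}
\mbox{sym}(\overline S^h)\;\to\; -C\mbox{Id}_{3\times 3} + \begin{bmatrix}A & 0\\ 0 & 0\end{bmatrix} + \mbox{sym}\bigl((d+Ce_3)\otimes e_3\bigr) \;=\; \mbox{sym}(s+d\otimes e_3)
\end{equation*}
strongly in $L^2$, which is the first assertion of the lemma.

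For the weak-$*$ convergence $S^h\rightharpoonup^* S$ in $L^{\infty}$, I would use the identity
\begin{equation*}
S^h = \overline S^h (A^h)^{-1} + \tfrac{1}{h}\bigl[(A^h)^{-1}-\mbox{Id}\bigr]
\end{equation*}
together with the expansion $(A^h)^{-1}=\mbox{Id}-hB+\mathcal O(h^2)$, valid because $\overline A=\mbox{Id}$. The $L^{\infty}$ bound on $\overline S^h$ (a consequence of \eqref{cond3} and the pointwise bound \eqref{Linftyboundond} on $d$, hence on $d^h$) propagates to $S^h$, and the weak-$*$ limit is then identified with $\overline S - B = S$ by comparing the defining relations \eqref{defoverlineS}--\eqref{defS} when $\overline A = \mbox{Id}$.

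The main obstacle is the quadratic term $\tfrac12 \nabla v^h\otimes \nabla v^h$: since $\nabla v^h$ is only weakly convergent (this is precisely the wrinkling mechanism), one cannot extract strong convergence of its tensor product by classical means. The strong $C^0$ approximation \eqref{cond1} produced by the convex-integration construction of \cite{lewicka2017convex} is exactly the tool that delivers the required strong convergence to the prescribed target $A-\nabla_{\mbox{sym}}w^h$. Ensuring in addition that the second-derivative remainders disappear after the $1/(2h)$ rescaling is a secondary but delicate issue, handled by the refinement \eqref{cond4} installed in Lemma 5 via the retardation argument.
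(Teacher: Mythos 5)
Your proposal is correct and follows the same overall strategy as the paper (choose $C$, apply the convex-integration Lemma 5, mollify $d$, expand $(\nabla^h y^h)^T\nabla^h y^h$, use the polar identity for $\overline{S}^h$), but it contains a genuine improvement: the paper's proof as written does \emph{not} actually produce $\mbox{sym}(\overline{S})=\mbox{sym}(s+d\otimes e_3)$, because the two $(1-hC)$ factors in the ansatz \eqref{ansatz} contribute $-2Ch\,\mbox{Id}_{3\times 3}$ to $(\nabla^h y^h)^T\nabla^h y^h$, while condition \eqref{cond1} only supplies $+C\,\mbox{Id}_{2\times 2}$ through $A=s+C\mbox{Id}$, leaving a residual $-C\,e_3\otimes e_3$ in the limiting $\mbox{sym}(\overline{S})$. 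You spot this and repair it by mollifying $d+Ce_3$ rather than $d$, which exactly cancels the spurious $(3,3)$ entry while preserving the required $L^2$ and $L^\infty$ convergences. This is a substantive correction; as the paper stands, the constructed $S$ is off by $-Ce_3\otimes e_3$ from the claimed minimizer of $Q_3$ over the $c\otimes e_3$ freedom, which would make the energy of the ansatz strictly exceed the target $\frac12\int Q_2(s-B_{2\times 2})$.

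Two further differences, both in your favor: (i) you bypass the paper's Taylor expansion of $\sqrt{\cdot}$ and work directly with the identity $(\nabla^h y^h)^T\nabla^h y^h = \mbox{Id}+2h\,\mbox{sym}(\overline{S}^h)+h^2(\overline{S}^h)^T\overline{S}^h$, which is cleaner since the $h^2$ term is negligible once $\overline{S}^h$ is bounded; (ii) for the weak-$*$ convergence of $S^h$ the paper merely asserts it from the uniform $SO(3)$ distance bound, whereas you supply the algebraic identity $S^h=\overline{S}^h(A^h)^{-1}+\tfrac1h[(A^h)^{-1}-\mbox{Id}]$ together with $(A^h)^{-1}=\mbox{Id}-hB+\mathcal O(h^2)$ to actually identify the limit $S=\overline{S}-B$, consistent with \eqref{defoverlineS}--\eqref{defS} at $\overline{A}=\mbox{Id}$. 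The only point to make fully explicit is that the $L^\infty$ bound on $\overline{S}^h$ requires choosing the compactness rotation $R^h(x')$ to absorb the $\mathcal O(h^{1/2})$ skew-symmetric block coming from $\nabla v^h$ (e.g.\ $R^h=Q\exp(h^{1/2}\,\mbox{skew})$, which is $x_3$-independent), so that $R^h(x')^T\nabla^h y^h-\mbox{Id}$ is genuinely $\mathcal O(h)$; the paper signals this by observing that the $h^{1/2}$ term is skew and bounded, and your argument implicitly relies on the same point.
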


\begin{proof}
Let $v^{h},w^{h}$ and $C$ be as in lemma \ref{wrinklesapproximatestrain} for $A=s(x')+CId.$ First, note that
\begin{equation}
\mbox{dist}(\nabla^{h}y^{h}, SO(3)) \leq C h,
\end{equation}
uniformly in $x$ and $h$ since the order $h$ terms are uniformly bounded in $x$ and $h$, and the term of order $h^{\frac{1}{2}}$ is skew symmetric and uniformly bounded in $x$ and $h.$ Hence  
\begin{equation}
 \limsup \parallel S^{h} \parallel_{L^{\infty}} < \infty.
\end{equation}
Note that from the definition of $\overline{S}^{h},$ we have that
\begin{equation}
(\nabla^{h}y^{h})^{T}(\nabla^{h}y^{h})=Id+h((\overline{S}^{h})^{T}+\overline{S}^{h})+h^{2}(\overline{S}^{h})^{T}\overline{S}^{h}.
\end{equation}

 From the form of $y^{h}$ (equation \ref{ansatz}) we have that
\begin{equation}
\begin{split}
\frac{\sqrt{(\nabla^{h}y^{h})^{T}(\nabla^{h}y^{h})}-Id}{h}&=\frac{1}{2}\nabla v_{n} \otimes \nabla v_{n} + \nabla_{sym} w_{n} + \mbox{sym}(d^{h}\otimes e_{3})+{o}(1)\\
&\to \mbox{sym}\left(s(x')\right) + \mbox{sym}(d\otimes e_{3})
\end{split}
\end{equation}
strongly in $L^{2}$. By Taylor expanding $\sqrt{}$ at the identity, we get (recall that by hypothesis $\limsup \parallel S^{h} \parallel < \infty $)
\begin{equation}
\|\frac{\sqrt{Id+h((\overline{S}^{h})^{T}+\overline{S}^{h})+h^{2}(\overline{S}^{h})^{T}\overline{S}^{h}}-Id}{h}-\frac{(\overline{S}^{h})^{T}+\overline{S}^{h}}{2} \|{L^{2}} \to 0,
\end{equation}
hence
\begin{equation}
\left( \frac{(\overline{S}^{h})^{T}+\overline{S}^{h}}{2}\right)_{2\times 2} \to \mbox{sym}\left( s(x') \right).
\end{equation}


 By defining $y^{h}$ this way, we have that 
\begin{equation}
\frac{(\overline{S}^{h})^{T}+\overline{S}^{h}}{2} \to \mbox{sym}\left( s(x') \right)+\mbox{sym}(d(x',t)\otimes e_{3}).
\end{equation}
\end{proof}

Now we turn to prove Theorem \ref{theorem2} in the case $\mathbf{II}=0.$

\begin{proof}

(Of Theorem \ref{theorem2} in the case $\mathbf{II}=0.$)

Let $s(x') \in C^{\infty}(\Omega, \mathbf{R}^{2\times 2}).$ Let $y^{h}$ be given by \eqref{ansatz}, with $v^{h}$ and $w^{h}$ satisfing \eqref{cond1}, \eqref{cond2}, \eqref{cond3}, \eqref{cond4} with $A=s+ C \ \mbox{Id}$ (existence of such a sequence in guaranteed by lemma \ref{lastlemma}). Let $d(x',x_{3}) \in {L^{2}}(\Omega^{1}, \mathbf{R}^{3})$ be such that
\begin{equation}
Q_{3}\left(x,[s(x') + d(x',x_{3})\otimes e_{3}-B(x)] \right)=Q_{2}\left(x, s(x')- B_{2\times 2}(x) \right).
\end{equation}

Recall that we had previously defined $R^{h}(x') \in SO(3)$ such that
\begin{equation}
 \int_{\Omega^{1}} \left| \nabla^{h}y^{h}-R^{h}(x')  \right|^{2} dx \leq Ch^{2},
\end{equation}
and the quantity $\overline{S}^{h}(x',x_{3})$ as
\begin{equation}
\overline{S}^{h}(x',x_{3})=\frac{1}{h} \left( R^{h}(x')^{T}\nabla^{h}y^{h} -\mbox{Id} \right).
\end{equation}

Using Lemmas \ref{lemmastrong} and \ref{lastlemma} we now conclude:

\begin{equation}
\begin{split}
\lim_{h \to 0}\frac{1}{h^{2}} \mathcal{E}^{h}(y^{h})&= \lim_{h \to 0}\frac{1}{h^{2}}\int_{\Omega^{1}}W(x,\nabla^{h} y^{h}\left( A^{h}\right)^{-1}(x))\\
&=\lim_{h \to 0}\frac{1}{h^{2}}  \int_{\Omega^{1}}W(x,(R^{h})^{T}\nabla^{h} y^{h}\left( A^{h} \right) ^{-1}(x))\\
&=\lim_{h \to 0}\frac{1}{h^{2}}  \int_{\Omega^{1}}W(x,(R^{h})^{T}\nabla^{h} y^{h}\left( A^{h} \right) ^{-1}(x)-Id+Id)\\
&= \frac{1}{2} \int_{\Omega^{1}}Q_{2}(x,[s(x')-B_{2\times 2}(x)] ).
\end{split}
\end{equation}

It is tempting to use this construction to build an ansatz directly: take 
\begin{equation}\label{defs*}
s^*(x')=\mbox{argmin}_{s} \int_{\Omega^{1}}Q_{2}(x,[s-B_{2\times 2}(x)] ) \, dt
\end{equation}
then construct $v^{h}, w^{h}$ as in lemma \ref{wrinklesapproximatestrain} with $A=s^*+C\, \mbox{Id}$ and conclude. The problem is that these arguments would only work if $s^*$ were known to be smooth. Instead, we approximate the minimizer by smooth functions. 

Let $s^*$ be given by equation \eqref{defs*}
and $s_{n} \in C_{0}^{\infty}(\overline{\Omega}, \mathbf{R}^{2\times 2})$ be such that
\begin{equation}
\parallel s^*-s_{n} \parallel_{L^{2}} \to 0.
\end{equation}

The previous argument shows that there exists $y^{h_{n}}$ with $h_{n} \leq \frac{1}{n}$ such that 
\begin{equation}
\left| \frac{1}{h_{n}^{2}} \mathcal{E}^{h_{n}}(y^{h_{n}}) -  \frac{1}{2}\int_{\Omega^{1}}Q_{2}(x,[s_{n}(x')-B_{2\times 2}(x)] ) \right| \leq \frac{1}{n},
\end{equation}
therefore, using lemma \ref{approximationbysmoothfunctions} we have
\begin{equation}
\begin{split}
\frac{1}{h^{2}} \lim_{h \to 0}\mathcal{E}^{h}(y^{h})&= \frac{1}{2}\int_{\Omega^{1}}Q_{2}(x,[s^*(x')-B_{2\times 2}(x)] )\\
&=\mathcal{I}(y).
\end{split}
\end{equation}

\end{proof}

For the case $\mathbf{II} \neq 0,$ we will use two levels of approximation: first, given an arbitrary $W^{2,2}$ isometric immersion, we will approximate it by a smooth and \textit{nice} isometric immersion. As in the previous step, the lower bound implies an optimal $s,$ which in general has only $L^{2}$ regularity and we approximate it by $C_{0}^{\infty}$ functions. Unlike the case of $\mathbf{II}=0$, we will not prove intermediate lemmas. 

\begin{proof}

(Of Theorem \ref{theorem2}, case $\mathbf{II}$ everywhere nonzero)

 We borrow notation from \cite{schmidt2007plate}. We let $\mathcal{A}_{0}$ be the set of smooth ismetric immersions that allow for a partition into finitely many bodies and arms (see \cite{schmidt2007plate} for a definition). Using \cite{hornung2011approximation} and \cite{hornung2011fine}, we have (as noted after Theorem 2.4 of \cite{schmidt2007plate}) that $\mathcal{A}_{0}$ is strongly $W^{2,2}$ dense in the space of $W^{2,2}$ isometric immersions \footnote{By using the abovementioned references, we can drop the requirement that $\Omega$ is convex, which was present in \cite{schmidt2007plate}, and require only mild regularity on the boundary. It is only for this step that we need $\partial \Omega$ to be piecewise $C^{1},$ although the theorem is still true under slightly weaker assumptions, see \cite{hornung2011approximation} and \cite{hornung2011fine} for more details.}. We will henceforth assume $y  \in \mathcal{A}_{0}.$ Our results can be extended to a general isometric immersion $y \in W^{2,2}$ by density.

 We can use Lemma 3.3 of  \cite{schmidt2007plate}, which states that for any $ y \in \mathcal{A}_{0}$ and any $s(x') \in C^{\infty}(\Omega, \mathbf{R}^{2\times 2}_{sym})$ such that $s$ vanishes in a neighborhood of $\mathbf{II}=0,$ there exists $g \in C^{\infty}(\overline{\Omega}, \mathbf{R^{2}})$ and $\alpha \in C^{\infty}(\overline{\Omega})$ such that
\begin{equation}\label{Schmidt's equation}
s(x')=\nabla_{sym} g + \alpha \mathbf{II}.
\end{equation}
To get started, we present an ansatz that works when $\mathbf{II}\neq 0$ \footnote{This assumption, in combination with the previous regularity assumptions, means that we can take $|\mathbf{II}|\geq c > 0.$}, so that any smooth $s(x')$ has a representation of the form \eqref{Schmidt's equation}. These ideas can be found in \cite{schmidt2007plate}, but our presentation will be different. Let
\begin{equation}\label{ansatz2}
y^{h}=y(x')+h[x_{3}\nu(x')+Qg(x')]+h^{2}QD^{h}(x',x_{3}),
\end{equation}
where now $g: {\Omega} \to \mathbf{R}^{3}$ and $Q=[\nabla'y,\nu]$ (note that $Q$ is no longer constant). Let $g=(g',g_{3}).$ Using \eqref{Schmidt's equation} we can choose $g$ such that
\begin{equation}
\mbox{sym}(s^*(x'))=\nabla_{sym}g'+g_{3}\mathbf{II},
\end{equation}
where $s^*$ is given by equation \eqref{defs*}.

Using an approximation argument as before, we can assume that $s^{*}$ is in $C^{\infty}(\overline{\Omega}, \mathbf{R}^{2\times 2}_{sym}).$ The vector $D^{h}(x',x_{3})$ in \eqref{ansatz2} plays the same role as the analogous term in our previous ansatz: it satisfies $\partial_{3} D^{h}(x',x_{3})=d^{h}(x',x_{3}),$ where the vector $d^{h}$ is a smoothed version of $d,$ and the vector $d(x',x_{3}) \in {L^{\infty}}(\Omega^{1}, \mathbf{R}^{3})$ is such that
\begin{equation}
\begin{split}
&Q_{3} \left(x, x_{3}\nabla y^{T}\nabla \nu+\nabla'g+\begin{bmatrix}
g_{3} \mathbf{II}\\
-g' \nabla y^{T}\nabla \nu
\end{bmatrix}
+d(x',x_{3})\otimes e_{3} -B(x) \right)\\
=&Q_{2} \left(x,s(x')+x_{3}\nabla y^{T}\nabla \nu -\left( B(x)\right)_{2\times 2} \right).
\end{split}
\end{equation}
The relationship between $d^{h}(x',x_{3})$ and $d(x',x_{3})$ was discussed in our treatment of ansatz \eqref{ansatz}, and the same arguments apply here. Recall our convention for summing matrices of different dimensions: the smaller matrix is viewed as the top left block of the bigger matrix. We now compute the gradient of the ansatz: 
\begin{multline}
\nabla^{h}y^{h}=\\
Q+hQ\left( x_{3}\nabla y^{T}\nabla \nu+\nabla'g+\begin{bmatrix}
g_{3} \nabla y^{T}\nabla \nu \\
- g' \nabla y^{T}\nabla \nu
\end{bmatrix}
+d^{h}(x',x_{3})\otimes e_{3} \right)+h^{2}\nabla'\left( QD^{h}(x',x_{3})\right).
\end{multline}
A more detailed version of this computation can be found in \cite{schmidt2007plate}, but essentially it uses the fact that 
\begin{equation}
Q^{T}\nabla \left( Qg \right)= \nabla' g + \begin{bmatrix}
g_{3} \nabla y^{T}\nabla \nu\\
-g' \nabla y^{T}\nabla \nu
\end{bmatrix}.
\end{equation}
This can be verified with an explicit computation, and using the following facts
\begin{itemize}
\item[$\bullet$] $\partial_{i} y \cdot \partial_{j} y = \delta_{ij}$ 
\item[$\bullet$] $\partial_{i} y \cdot \nu = 0$
\item[$\bullet$] $\partial_{i} y \cdot \partial_{jk} y = 0$.
\end{itemize}


It then follows that
\begin{equation}
\begin{split}
& \lim_{h \to 0}\frac{1}{h^{2}}\mathcal{E}^{h}(y^{h})\\
=& \lim_{h \to 0}\frac{1}{h^{2}}\int_{\Omega^{1}}W(x,\nabla^{h} y^{h}\left(A^{h} \right)^{-1}(x))\\
=&\lim_{h \to 0}\frac{1}{h^{2}}  \int_{\Omega^{1}}W(x,Q^{T}\nabla^{h} y^{h}\left(A^{h} \right)^{-1}(x))\\
=&\lim_{h \to 0}\frac{1}{h^{2}}  \int_{\Omega^{1}}W\left( x,Id+h\left( x_{3}\nabla y^{T}\nabla \nu+\nabla'g+\begin{bmatrix}
g_{3} \mathbf{II}\\
-g' \mathbf{II}
\end{bmatrix}
+d^{h}(x',x_{3})\otimes e_{3} \right)+h^{2}\nabla'D^{h}(x',x_{3}) - hB \right)\\
=& \frac{1}{2} \int_{\Omega^{1}}Q_{2}(x,[x_{3}\nabla y^{T} \nabla \nu+s(x')-B_{2\times 2}(x)] )
\end{split}
\end{equation}

As in the case $\mathbf{II}=0,$ we cannot simply take $s(x')$ to be the minimizer of
\begin{equation}
\int_{\Omega} Q_{2}\left( x, x_{3} \nabla y^{T} \nabla\nu +s(x')-B_{2\times 2} \right) dt \, dx'.
\end{equation}
But by lemma \ref{approximationbysmoothfunctions} we can approximate the minimum using a sequence of smooth functions $s(x')$ and let $y^{h}=y^{h_{j}}$ be obtained using the ansatz associated with $s_{j}$ (with $h$ sufficiently small). This suffices to establish the upper bound when $\mathbf{II}$ is everywhere nonzero.

\end{proof}

Finally, we turn to the general case, where $\mathbf{II}$ is neither identically vanishing, nor everywhere nonzero. The strategy will be to glue the two previous ansatzes with a transition layer at the boundary of $\{ \mathbf{II}=0 \}.$ The main challenge is to build the transition in such a way that the transition layer has negligible energy. We will prove one intermediate lemma, Lemma \ref{second_approx}, in order to justify taking $s$ to be a particular type of smooth function. 

For the general case, we introduce a further approximation: apart from approximating $s$ by $C^{\infty}_{0}$ functions, and $y$ by nice smooth isometric immersions, we will approximate the sets where $\mathbf{II}$ is $0$ and nonzero. Recall that we are assuming that $h$ is a fixed sequence such that $h_{n} \to 0.$ We again assume $y \in \mathcal{A}_{0}.$ 

We assume that the sets  $\{ \mathbf{II}=0\} $ and $\{ \mathbf{II} \neq 0\}$ are nonempty sets, and we introduce the notation 
\begin{equation}
F=\partial \{x \in \Omega| \mathbf{II}=0\},
\end{equation} 
and  
\begin{equation}
F_{\epsilon}= \bigcup_{x\in F} B(x,\epsilon)
\end{equation} 
for the boundary(which we can assume has measure $0$\footnote{More precisely, we may assume that the set where $\mathbf{II}=0$ has finitely many connected components by Proposition 5 of \cite{hornung2011approximation}, and the boundary of each connected component has measure $0$ by Lemma 1 of \cite{hornung2011fine}}), and a thickened boundary. Let 
\begin{equation}
\Omega^ {+}=\{x \in \Omega| \mathbf{II}\neq 0\}
\end{equation}
 and 
\begin{equation}
\Omega^{0}=\{x \in \Omega| \mathbf{II}=0\}.
\end{equation}
Let 
\begin{equation}
\Omega_{+}^{\epsilon}=\Omega^{+} \setminus F_{\epsilon}
\end{equation}
and 
\begin{equation}
\Omega_{0}^{\epsilon}=\Omega^{0} \setminus F_{\epsilon}
\end{equation}
be the points in $\Omega^{+}$ (respectively $\Omega^{0}$) away from the thickened boundary layer. 

To succesfully combine the two ansatzes, it is important that they transition smoothly along a boundary layer. This is the purpose of the next lemma. 

\begin{lemma}\label{second_approx}
Let $K \in \mathbf{R}^{2\times 2}$ and let $C^{*}_{K}(\Omega, \mathbf{R}^{2\times 2})=\{ f \in C^{\infty}{(\Omega, \mathbf{R}^{2\times 2})} | f(x)=K \mbox{ for } x\in F_{\epsilon} \mbox{ for some } \epsilon\}.$ For all $K \in \mathbf{R}^{2\times 2}$ we have
\begin{equation}
\begin{split}
&\min_{s \in L^{2}(\Omega,\mathbf{R}^{2\times 2}),d \in L^{2}(\Omega^{1}, \mathbf{R}^{3})}\int_{\Omega^{1}} {Q}_{3}\Big( x,[s(x')+x_{3}H(x')-B(x',x_{3}) + d(x',x_{3}) \otimes e_{3}] \Big)dx\\
=&\inf_{s \in C^{*}_{K}(\Omega,\mathbf{R}^{2\times 2}),d \in L^{2}(\Omega^{1}, \mathbf{R}^{3})}\int_{\Omega^{1}} {Q}_{3}\Big( x,[s(x')+x_{3}H(x')-B(x',x_{3}) + d(x',x_{3}) \otimes e_{3}] \Big)dx.
\end{split}
\end{equation}
\end{lemma}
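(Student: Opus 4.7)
The plan is to reduce to Lemma \ref{approximationbysmoothfunctions} and then perform a cutoff modification near the transition set $F$. First, minimizing over $d$ for fixed $s$ and invoking the definition \eqref{definition of Q_2} of $Q_2$, the problem reduces to showing
\begin{equation*}
\inf_{s \in L^{2}} \int_{\Omega^{1}} Q_{2}\bigl(x, (s(x')+x_{3}H(x')-B(x',x_{3}))_{2\times 2}\bigr)\,dx = \inf_{s \in C^{*}_{K}} \int_{\Omega^{1}} Q_{2}\bigl(x, (s(x')+x_{3}H(x')-B(x',x_{3}))_{2\times 2}\bigr)\,dx.
\end{equation*}
By Lemma \ref{approximationbysmoothfunctions}, the left-hand infimum is already achieved in the limit by $s_n \in C_{0}^{\infty}(\Omega, M_{2\times 2})$. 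The task is therefore to show that any $s \in C_{0}^{\infty}$ can be approximated in energy by functions in $C^{*}_{K}$.

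Second, given $s \in C_{0}^{\infty}$ I would build the approximant by cutoff: choose $\eta_{\delta} \in C_{c}^{\infty}(\Omega)$ with $0 \leq \eta_{\delta} \leq 1$, $\eta_{\delta} \equiv 1$ on $F_{\delta}$, and $\eta_{\delta} \equiv 0$ outside $F_{2\delta}$. Set
\begin{equation*}
\tilde{s}_{\delta}(x') = s(x') + \eta_{\delta}(x')\bigl(K - s(x')\bigr).
\end{equation*}
Then $\tilde{s}_{\delta} \in C^{\infty}(\Omega, M_{2\times 2})$, $\tilde{s}_{\delta} \equiv K$ on $F_{\delta}$, and so $\tilde{s}_{\delta} \in C_K^{*}$. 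Moreover $\tilde{s}_{\delta}-s$ is supported in $F_{2\delta}$ and bounded uniformly by $\|s\|_{L^{\infty}} + |K|$ (finite because $s$ is smooth and compactly supported).

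Third, I would estimate the induced energy difference using the bilinearity of $Q_2$, which gives the pointwise bound $|Q_{2}(x,X)-Q_{2}(x,Y)| \leq C(|X|+|Y|)|X-Y|$. Writing $\Psi = x_3 H - B$, Cauchy--Schwarz and the support property yield
\begin{equation*}
\left| \int_{\Omega^{1}} Q_{2}(x, \tilde{s}_{\delta} + \Psi)-Q_{2}(x,s+\Psi)\,dx \right| \leq C\|\tilde{s}_{\delta}-s\|_{L^{\infty}} \bigl( \|s\|_{L^{\infty}}+|K|+\|H\|_{L^{2}}+\|B\|_{L^{\infty}} \bigr) |F_{2\delta}|^{1/2},
\end{equation*}
which vanishes as $\delta \to 0$ provided $|F_{2\delta}| \to 0$. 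Combining this with the approximation $s_n \to s^{*}$ from Lemma \ref{approximationbysmoothfunctions} via a diagonal argument gives the claimed equality of infima.

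The main obstacle is justifying $|F_{2\delta}| \to 0$. This is where the hypothesis $y \in \mathcal{A}_{0}$ and the regularity results cited in the footnote enter: by Proposition 5 of \cite{hornung2011approximation} the set $\{\mathbf{II}=0\}$ has finitely many connected components, and by Lemma 1 of \cite{hornung2011fine} the boundary of each component has two-dimensional Lebesgue measure zero, so $|F|=0$ and therefore $|F_{2\delta}| \downarrow 0$. Everything else is a quantitative but routine consequence of the bilinear structure of $Q_2$ and the smoothness of $s$.
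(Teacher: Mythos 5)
Your proof is correct and follows essentially the same strategy as the paper: show that $C^{*}_{K}$ is, for energy purposes, as rich as $L^{2}$, and then transfer $L^{2}$-closeness to energy-closeness via the bilinear structure of $Q_{2}$, using $|F|=0$ (which holds because $y\in\mathcal{A}_{0}$) so that $|F_{2\delta}|\to 0$. The only differences are cosmetic: you build the approximant by a smooth cutoff $\tilde{s}_{\delta}=s+\eta_{\delta}(K-s)$ after first passing through $C_{0}^{\infty}$ via Lemma~\ref{approximationbysmoothfunctions} and a diagonal argument, whereas the paper goes straight from an arbitrary $f\in L^{2}$ to $C_K^{*}$ by mollifying the piecewise field $f\mathbf{1}_{\Omega\setminus F_{\epsilon}}+K\mathbf{1}_{F_{\epsilon}}$ with radius $\delta<\epsilon/2$; both constructions land in $C^{*}_{K}$ and both invoke the completing-squares estimate of Lemma~\ref{approximationbysmoothfunctions} for the final energy comparison. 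One small imprecision: in your energy estimate the vanishing factor is really $\|\tilde{s}_{\delta}-s\|_{L^{2}(F_{2\delta}^{1})}\le (\|s\|_{L^{\infty}}+|K|)\,|F_{2\delta}|^{1/2}$, so writing both $\|\tilde{s}_{\delta}-s\|_{L^{\infty}}$ and $|F_{2\delta}|^{1/2}$ as multiplicative factors double-counts; it does not affect the conclusion since the $L^{\infty}$ term is bounded and $|F_{2\delta}|^{1/2}\to 0$.
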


\begin{proof}
The main task is to show that $C^{*}_{K}(\Omega,\mathbf{R}^{2\times 2})$ is dense in $L^{2}(\Omega,\mathbf{R}^{2\times 2}).$ For this, let $f \in L^{2}(\Omega,\mathbf{R}^{2\times 2}),$ and consider, for $\delta$ to be determined later, 
\begin{equation}
f_{\epsilon, \delta}=(\overline{f \mathbf{1}_{\Omega\setminus F_{\epsilon}}+K \mathbf{1_{F_{\epsilon}}}}) \ast \mu_{\delta},
\end{equation}
where $\overline{g}$ denotes the trivial extension of $g$ to $\mathbf{R}^{n}.$ 


Notice that if 
\begin{equation}
\delta<\frac{\epsilon}{2},
\end{equation}
then
\begin{equation}
f_{\epsilon, \delta}= K
\end{equation}
in $F_{\frac{\epsilon}{4}}$  and therefore $f_{\epsilon, \delta} \in C^{*}_{K}$. We also have that
\begin{equation}
\parallel f_{\epsilon, \delta(\epsilon)} - f \parallel_{L^{2}} \to 0,
\end{equation}
since
\begin{equation}
\begin{split}
\parallel f-f \mathbf{1}_{ \Omega \setminus F_{\epsilon}} \parallel_{L{2}}&=\parallel f \mathbf{1}_{F_{\epsilon}}\parallel_{L{2}} \to 0, \\
\parallel \left( f \mathbf{1}_{\Omega\setminus F_{\epsilon}} \right) - \left( f \mathbf{1}_{\Omega \setminus F_{\epsilon}} - K \mathbf{1}_{F_{\epsilon}} \right)  \parallel_{L_{2}}&=\parallel K \mathbf{1}_{F_{\epsilon}} \parallel_{L_{2}} \to 0, \\
 \parallel f \mathbf{1}_{\Omega \setminus F_{\epsilon}} - K \mathbf{1}_{F_{\epsilon}} - (\overline{f \mathbf{1}_{\Omega\setminus F_{\epsilon}}+K \mathbf{1_{F_{\epsilon}}}}) \ast \mu_{\delta} \parallel_{L_{2}}& \to 0, 
\end{split}
\end{equation}
(in the last line, we assume $\epsilon$ is fixed and $\delta \to 0.$) Here we have used that $|F_{\epsilon}| \to 0,$ since $F$ is closed and has measure $0$. Hence to prove convergence, given an error $\eta,$ we can choose $\delta$ such that $\delta \leq \frac{\epsilon}{2}$ and each error is smaller than $\frac{\eta}{3}.$ The lemma follows easily from this approximation result by an argument we used in lemma \ref{approximationbysmoothfunctions}.
\end{proof}

We now turn to the proof of Theorem \ref{theorem2} in the general case.

\begin{proof}

(Of Theorem \ref{theorem2}, general case).

Both our ansatzes began by considering an arbitrary $s \in C^{\infty}(\overline{\Omega}, \mathbf{R}^{2\times 2}_{sym})$, so of course we will do the same here: let $s \in C^{\infty}(\overline{\Omega}, \mathbf{R}^{2\times 2}_{sym}),$ and let $C$ be such that 
\begin{equation}
\widetilde{s}=s+CId > c Id
\end{equation}
for some $c \in \mathbf{R}^{+}.$ Let $v^{h}, w^{h}$ be such that \eqref{cond1}-\eqref{cond4} hold for $A=\widetilde{s}.$ Let $\phi_{\epsilon} \in C^{\infty}(\overline{\Omega})$ be such that
\begin{equation}
\begin{split}
\phi_{\epsilon} &=1 \mbox{ in } \Omega_{0}^{\epsilon}\\
\phi_{\epsilon} &=0 \mbox{ in } \Omega_{+},
\end{split}
\end{equation}  
(for example, take $\phi_{\epsilon}=\mathbf{1}_{\Omega_{0}^{\epsilon}} \ast \mu_{\epsilon}$), let $v^{h}_{\epsilon}=v^{h} \phi_{\epsilon}$ and $w^{h}_{\epsilon}=w^{h} \phi_{\epsilon}.$ Let $\eta_{\epsilon} \in C^{\infty} (\overline{\Omega})$ be such that
\begin{equation}
\begin{split}
\eta_{\epsilon} &=1 \mbox{ in } \Omega_{+}^{2\epsilon}\\
\eta_{\epsilon} &=0 \mbox{ in } \Omega \setminus \Omega_{+}^{\epsilon},
\end{split}
\end{equation} 
for example, take $\eta_{\epsilon}=\mathbf{1}_{\Omega_{+}^{2\epsilon}} \ast \mu_{\epsilon}.$

By defining $\phi_{\epsilon}$ and $\eta_{\epsilon}$ this way, we get the bound
\begin{equation}
\begin{split}
\parallel \nabla \eta_{\epsilon} \parallel_{L^{\infty}}+\parallel \nabla \phi_{\epsilon} \parallel_{L^{\infty}} &\leq \frac{k}{\epsilon}\\
\parallel \nabla \nabla \eta_{\epsilon} \parallel_{L^{\infty}}+\parallel \nabla \nabla \phi_{\epsilon} \parallel_{L^{\infty}} &\leq \frac{k}{\epsilon^{2}}
\end{split}
\end{equation}
for some $k.$

Let $(g'_{\epsilon},g^{3}_{\epsilon})$ be such that \eqref{Schmidt's equation} holds with  $g=g'_{\epsilon}$, $\alpha=g^{3}_{\epsilon}$ and 
\begin{equation}
s=\widetilde{s}\eta_{\epsilon}.
\end{equation}

The reference \cite{schmidt2007plate} shows that $\mbox{supp}(g'_{\epsilon}, g^{3}_{\epsilon}) \subset \Omega^{+}.$ Consider the ansatz
\begin{equation}\label{finalansatz}
\begin{split}
y^{h}=&(1-hK)y(x')+h[x_{3}(1-hC)\nu(x')+Qg_{\epsilon}(x')]+h^{2}QD^{h}(x',x_{3})\\
&+hQw_{\epsilon}^{h}(x')+h^{\frac{1}{2}}Q \begin{bmatrix}
0\\
0\\ 
v_{\epsilon}^{h}
\end{bmatrix}-h^{\frac{3}{2}} x_{3} Q \nabla (v_{\epsilon}^{h})-\frac{1}{2}h^{2}x_{3} Q\begin{bmatrix}
0\\
0\\
|\nabla v_{\epsilon}^{h}|^{2}
\end{bmatrix} +\\
& \quad h^{2}x_{3} Q (g_{\epsilon}'\nabla y^{T}\nabla \nu,0)  ,
\end{split}
\end{equation}
where $d^{h}(x',x_{3})$ is as in \eqref{defofdh}, $D^{h}(x',x_{3})=\int_{0}^{x_{3}} d^{h}(x',s) \, ds$, and $Q=(\nabla y | \nu)$ (which is no longer constant).  

We can compute:
\begin{equation}
\begin{split}
\nabla^{h}y^{h}&=Q+hQ\left[-CId+x_{3} \nabla y^{T} \nabla \nu+ \nabla' g_{\epsilon} + \begin{bmatrix}
g_{\epsilon}^{3} \nabla y^{T}\nabla \nu\\
-g_{\epsilon}' \nabla y^{T}\nabla \nu
\end{bmatrix}+d^{h}\otimes e_{3} - hCx_{3}\nabla y ^{T}\nabla \nu\right]\\
& \quad +h^{2}\nabla'(QD^{h})+hQ\nabla w_{\epsilon}^{h}+h^{\frac{1}{2}}Q\begin{bmatrix}
0_{2\times 2}& -\nabla (v_{\epsilon}^{h})^{T}\\
\nabla v_{\epsilon}^{h} & 0
\end{bmatrix}
-h^{\frac{3}{2}} x_{3}Q\nabla^{2}v_{\epsilon}^{h}\\
&\quad -\frac{1}{2}hQ\begin{bmatrix}
0\\
0\\
|\nabla v_{\epsilon}^{h}|^{2}
\end{bmatrix} \otimes e_{3}-\frac{1}{2}h^{2}x_{3}Q\begin{bmatrix}
0&0&0\\
0&0&0\\
\partial_{x_{1}}|\nabla v_{\epsilon}^{h}|^{2}&\partial_{x_{2}}|\nabla v_{\epsilon}^{h}|^{2}&0
\end{bmatrix}\\
&\quad +hQ (g_{\epsilon}'\nabla y^{T}\nabla \nu,0)\otimes e_{3}+x_{3}h^{2}\nabla'(Q (g_{\epsilon}'\nabla y^{T}\nabla \nu,0)). 
\end{split}
\end{equation}
In the above computation we have used that $v^{h}_{\epsilon}$ and $w^{h}_{\epsilon}$ are nonzero only in the region where $Q(x')$ is constant. Assume that
\begin{equation}\label{lastapproximationinansatz}
h \nabla' [Q(g_{\epsilon}' \nabla y^{T}\nabla \nu,0)] \to 0, 
\end{equation}
(we will prove this at the end using a retardation argument as before).

Before concluding, we need a few more technical remarks: note that
\begin{equation}
\begin{split}
\left| h^{\frac{1}{2}} \nabla^{2} v_{\epsilon}^{h} \right| &= h^{\frac{1}{2}} [\mathcal{O}(\nabla^{2} (\phi_{\epsilon} )v^{h})+\mathcal{O}((\nabla \phi_{\epsilon})( \nabla v^{h}))+\mathcal{O}(\phi_{\epsilon}( \nabla^{2} v^{h}))]\\
&= \textit{o} \left( \frac{h^{\frac{1}{2}}}{\epsilon^{2}} \right)+\mathcal{O} \left( \frac{h^{\frac{1}{2}}}{\epsilon} \right)+\textit{o} \left( 1 \right),
\end{split}
\end{equation}
while
\begin{equation}
\begin{split}
\left| \nabla v_{\epsilon}^{h} \right|&= \mathcal{O}(\phi_{\epsilon} \nabla v^{h})+\mathcal{O}(\nabla \phi_{\epsilon} v^{h})\\
&= \mathcal{O}(|\nabla v^{h}|)+\frac{1}{\epsilon} \mathcal{O}(v^{h}),
\end{split}
\end{equation}
and
\begin{equation}
\begin{split}
\left| \nabla w_{\epsilon}^{h} \right|&= \mathcal{O}(\phi_{\epsilon} \nabla w^{h})+\mathcal{O}(\nabla \phi_{\epsilon} w^{h})\\
&= \mathcal{O}(|\nabla w^{h}|)+\frac{1}{\epsilon} \mathcal{O}(w^{h}).
\end{split}
\end{equation}
Hence, by choosing $\epsilon$ appropriately depending on $h_{n}$, for example, $ \epsilon= \max (h^{\frac{1}{8}}, \sqrt{|v^{h}|}, \sqrt{|w^{h}|}) = \epsilon $, we can arrange that $\epsilon \to 0,$ as $h_{n} \to 0$ and 
\begin{equation}
\limsup \left| h^{\frac{1}{2}} \nabla^{2} v_{\epsilon}^{h} \right| + \left|   \nabla v_{\epsilon}^{h} \right| +  \left| \nabla w_{\epsilon}^{h} \right| < \infty.
\end{equation}

We now derive a uniform bound on several error terms. Using that $g=0$ on $\Omega^{0}$ we have that, on $\Omega^{0},$
\begin{equation}
\begin{split}
\nabla^{h}y^{h}&=Q+hQ\left[-C \text{Id} +d^{h}\otimes e_{3}\right]+h^{2}\nabla'(QD^{h})\\
&\quad +hQ\nabla w_{\epsilon}^{h}+h^{\frac{1}{2}}Q\begin{bmatrix}
0_{2\times 2}& -\nabla (v_{\epsilon}^{h})^{T}\\
\nabla v_{\epsilon}^{h} & 0
\end{bmatrix}
-h^{\frac{3}{2}} x_{3}Q\nabla^{2}v_{\epsilon}^{h}\\
&\quad -\frac{1}{2}hQ\begin{bmatrix}
0\\
0\\
|\nabla v_{\epsilon}^{h}|^{2}
\end{bmatrix} \otimes e_{3}-\frac{1}{2}h^{2}x_{3}Q\begin{bmatrix}
0&0\\
0&0\\
\partial_{x_{1}}|\nabla v_{\epsilon}^{h}|^{2}&\partial_{x_{2}}|\nabla v_{\epsilon}^{h}|^{2}
\end{bmatrix}\\
&= Q +h^{\frac{1}{2}}Q\begin{bmatrix}
0_{2\times 2}& -\nabla (v_{\epsilon}^{h})^{T}\\
\nabla v_{\epsilon}^{h} & 0
\end{bmatrix}+h\mathcal{O}(h^{\frac{1}{2}} \nabla^{2} v_{\epsilon}^{h}+|\nabla v_{\epsilon}^{h}|^{2}+\nabla w_{\epsilon}^{h}+1),
\end{split}
\end{equation}
and on $\Omega^{0} \cap F_{\epsilon}$, 
\begin{equation}
\begin{split}
\frac{1}{h} \left( (\nabla^{h} y^{h})^{T}\nabla^{h} y^{h} - Id \right) &= \mathcal{O} (h^{\frac{1}{2}} \nabla^{2} v_{\epsilon}^{h}+|\nabla v_{\epsilon}^{h}|^{2}+\nabla w_{\epsilon}^{h}+1)\\
&\leq C_{1}
\end{split}
\end{equation}
for some $C_{1} \in \mathbf{R}.$

We also have on $\Omega^{+}$, that
 \begin{equation}
\begin{split}
\nabla^{h}y^{h}&=Q+hQ\left[-C \text{Id}+x_{3} \nabla y^{T}\nabla \nu+ \nabla' g_{\epsilon} + \begin{bmatrix}
g_{\epsilon}^{3} \nabla y^{T}\nabla \nu\\
-g_{\epsilon}' \nabla y^{T}\nabla \nu
\end{bmatrix}+d^{h}\otimes e_{3}\right]\\
&\quad +h^{2}\nabla'(QD^{h}) +hQ (g_{\epsilon}'\nabla y^{T}\nabla \nu,0)\otimes e_{3}+x_{3}h^{2}\nabla'(Q (g_{\epsilon}'\nabla y^{T}\nabla \nu,0))\\
& \quad -h^{2}x_{3}C\nabla'\nu ,
\end{split}
\end{equation}
 
	and on $\Omega_{+} \cap F_{\epsilon}$,
\begin{equation}
\begin{split}
\frac{1}{h} \mbox{sym}\left(   Q^{T}\nabla^{h} y^{h}-Id  \right) &= -C \text{Id}+x_{3}\mathbf{II}+ \widetilde{s}\eta_{\epsilon}+\mbox{sym}(d^{h}\otimes e_{3})\\
&\quad +h\nabla'[QD^{h}]+x_{3}h\nabla'[Q (g_{\epsilon}'\nabla y^{T}\nabla \nu,0)]+o(1) \\
&\leq C_{2},
\end{split}
\end{equation}
for some $C_{2} \in \mathbf{R}$ since all terms are bounded. 

Finally, using the approximation arguments presented earlier in this section along with the fact that the measure of $F_{\epsilon}$ tends to $0$, we have that 
\begin{equation}
 \lim_{h \to 0} \frac{1}{h^{2}} \int_{\Omega^{0}}W(x,\nabla^{h} y^{h}\left( A^{h} \right)^{-1}(x))=\int_{\Omega^{0}}Q_{2}(x,[x_{3}\nabla y^{T} \nabla \nu +s(x')-B_{2\times 2}(x))] ).
\end{equation}

Similarly, and using \eqref{lastapproximationinansatz} along with the previous arguments we have that 

\begin{equation}
\begin{split}
 \lim_{h \to 0}\frac{1}{h^{2}}\int_{\Omega_{+}}W(x,\nabla^{h} y^{h}\left( A^{h} \right)^{-1}(x))&= \lim_{h \to 0}\frac{1}{h^{2}} \int_{\Omega_{+}}W(x,Q^{T}\nabla^{h} y^{h}\left( A^{h} \right)^{-1}(x))\\
&=\int_{\Omega_{+}}Q_{2}(x,[x_{3}\nabla y^{T} \nabla \nu +s(x')-B_{2\times 2}(x)] ).
\end{split}
\end{equation}

It remains to prove that we may take a sequence such that 
\begin{equation}
h\nabla'[Q (g_{\epsilon}'\nabla y^{T}\nabla \nu,0)]  \to 0, 
\end{equation}
we proceed with a retardation argument: let $h_{n} \to 0$ monotonically, we define a sequence $\sigma(n)$ as 
\begin{equation*}
\sigma(1)=1
\end{equation*}
and
\begin{equation*}
\sigma(n+1)=
\begin{cases}
\sigma(n)+1 \mbox{ if } h_{n} \nabla'[Q (g_{\epsilon}'\nabla y^{T}\nabla \nu,0)]  \leq \frac{1}{\sigma(n)+1}  \\
\sigma(n) \mbox{ if not}.
\end{cases}
\end{equation*}
Then our final ansatz is the original given by \eqref{finalansatz} with $h_{n}$ as in the original sequence, but $\epsilon_{n}$ replaced by $\epsilon_{\sigma(n)}.$ This works provided $h_{n} \to 0$ monotonically, which can be assumed without loss of generality by taking a sub-sequence.

To conclude the upper bound, we proceed in the following way:
\begin{itemize}
\item[$\bullet$] Approximate the limiting deformation $y(x')$ by $y_{\delta} \in \mathcal{A}_{0}$ such that the difference in the energy is less than $\frac{\delta}{3}.$

\item[$\bullet$] Approximate the optimal $s$ by $s_{\delta} \in C^{\infty}_{0}$ such that the difference in the energy is less than $\frac{\delta}{3}.$

\item[$\bullet$] Choose $h_{n}$ such that $y^{h_{n}}$ achieves the energy associated to $y_{\delta}$ and $s_{\delta}$ up to an error of $\frac{\delta}{3}.$
\end{itemize}
This way we construct a sequence that converges to $y(x')$ in the right way  and achieves the lower bound.

\end{proof}

\section{Algebraic reduction}

Overall, the goal of this section is to understand and simplify the functional \eqref{Gammalimitfunctional}. We start by finding an explicit formula for $s(x'),$ given by
\begin{equation}
s(x') = \mbox{argmin}_{s \in \mathbf{R}^{2\times 2}} \int_{-\frac{1}{2}}^{\frac{1}{2}} Q_{2}(x',t,s+x_{3}\nabla\emph{y}^{T}\nabla\emph{b}(x')-(\overline{A}B)_{2\times 2}(x',x_{3}),\overline{A}) dt
\end{equation}
where $Q_{2}$ is given by \eqref{definition of Q_2}. We proceed to reduce the lower bound of the problem to an elastic sheet with thickness-independent elastic law, and simpler prestrain. In the cases treated by Proposition 2 and Theorem 3, the analysis in this section is a reduction of the $\Gamma$ limit, while in general it is only a reduction of the lower bound. The new form of the energy will involve the projection of the prestrain onto a suitable linear space with the appropriate norm.  

Our basic approach is the following: first we eliminate the dependence of the lower bound on $s(x')$ by writing it as the energy of a new quadratic form and prestrain. Next, we rewrite this energy as that of a constant-in-thickness quadratic form and linear-in-thickness prestrain, plus an configuration-independent term. Lastly we analyze the configuration-independent term. Similar considerations are found in \cite{de2020energy}.


In this section, in order to ease notation, we will omit the dependence on $\overline{A}$. Note that since the form $Q_{2}(x',t, \cdot)$ is bilinear, there is a tensor $L_{2}(x',t)$ such that
\begin{equation}
Q_{2}(x',t,X)=\langle L_{2}(x',t)X,X \rangle.
\end{equation}
An immediate computation shows that
\begin{equation}
s(x')=(L_{2}^{*})^{-1} \left( \int_{-\frac{1}{2}}^{\frac{1}{2}} L_{2}(x',t)([-t\nabla y^{T} \nabla b + (\overline{A}B)_{2\times 2}]) \right),
\end{equation}
where $L_{2}^{*}(x')=\int_{-\frac{1}{2}}^{\frac{1}{2}} L_{2}(x',t)dt$. We can write $s(x')$ in terms of the tensors $\phi_{1}(x'):\mathbf{R}^{2 \times 2} \to \mathbf{R}^{2 \times 2}$ and $\phi(x'):L^{2}[(-\frac{1}{2}, \frac{1}{2}), \mathbf{R}^{2 \times 2}] \to \mathbf{R}^{2 \times 2}$ defined as 
\begin{equation}
\phi_{1}(x')(X)=(L^{*}_{2})^{-1} \left( \int_{-\frac{1}{2}}^{\frac{1}{2}} tL_{2}(x',t)X dt \right)
\end{equation}
and 
\begin{equation}
\phi(x')(X)=(L^{*}_{2})^{-1} \left( \int_{-\frac{1}{2}}^{\frac{1}{2}} L_{2}(x',t)X(t) dt \right).
\end{equation}
The tensor $\phi(x')(X(t))$ gives the projection of $X(t)$ onto the space of functions constant in $t,$ and $\phi_{1}(X)=\phi(tX)$. By writing $M$ in terms of these tensors, we can rewrite the lower bound of the $\Gamma-$limit (equation \ref{Gammalimitfunctional}) as 
\begin{equation}
\begin{split}
{\mathcal{I}}(y)=\frac{1}{2}\int_{\Omega^{1}} \langle L_{2}(x',t)[ t\nabla y^{T}\nabla b-\phi_{1}(\nabla y^{T}\nabla b)-\left([\overline A(x')B(x)]_{2\times 2}-\phi(\overline AB)_{2\times 2}] \right), \\
t\nabla y^{T}\nabla b-\phi_{1}(\nabla y^{T}\nabla b)- \left( [\overline A(x')B(x)]_{2\times 2}-\phi(\overline AB)_{2\times 2} \right) \rangle \,  dx'dt.
\end{split}
\end{equation}
Let 
\begin{equation}
\begin{split}
V_{2}(x',t)&=tId-\phi_{1}(x') \\
N_{1}(x',t)&= [\overline A(x')B(x',t)]_{2 \times 2}-\phi([\overline A(x')B(x',t)]_{2 \times 2})\\
T_{2}(x',t)&= V_{2} \circ L_{2} \circ V_{2},
\end{split}
\end{equation}
then
\begin{equation}\label{simplificationofI}
\begin{split}
{\mathcal{I}}(y)&=\frac{1}{2}\int_{\Omega^{1}} \langle L_{2}(x',t)[V_{2} \nabla y^{T}\nabla b-N_{1}(x)],V_{2} \nabla y^{T}\nabla b-N_{1}(x) \rangle \, dx'dt\\
=&\frac{1}{2}\int_{\Omega^{1}} \langle T_{2}(x',t)[ \nabla y^{T}\nabla b-n^{*}(x')],
 \nabla y^{T}\nabla b-n^{*}(x') \rangle \, dx'dt\\
+&\frac{1}{2}\int_{\Omega^{1}}\langle L_{2}(x',t)N_{1}, N_{1} \rangle - \langle T_{2}(x',t)n^{*}(x'), n^{*}(x') \rangle \, dx'dt,
\end{split}
\end{equation}
where $n^{*}(x')$ satisfies
\begin{equation}\label{defn*}
\int_{-\frac{1}{2}}^{+\frac{1}{2}}  V_{2}(x',t)\circ L_{2}(x',t)N_{1}dt =\int_{-\frac{1}{2}}^{+\frac{1}{2}}  T_{2}(x',t)[n^{*}(x')]dt
\end{equation}
i.e.
\begin{equation}
n^*(x')=(T_{2}^{*})^{-1} \left( \int_{-\frac{1}{2}}^{\frac{1}{2}}  V_{2}(x',t)\circ L_{2}(x',t)N_{1}dt \right)
\end{equation}
where $T_{2}^{*}=\int_{-\frac{1}{2}}^{\frac{1}{2}} T_{2}(x',t)dt.$ We can interpret $n^{*}(x')$ as the preferred curvature of the sheet. Note that $T_{2}^{*}$ is positive definite, and therefore invertible, since for any $X \neq 0$ we have that
\begin{equation}
\begin{split}
\langle T_{2}^{*} X, X \rangle &= \int_{-\frac{1}{2}}^{\frac{1}{2}} \langle T_{2}(x',t)X, X \rangle \, dt\\
&= \int_{-\frac{1}{2}}^{\frac{1}{2}} \langle L_{2}(x',t)\left( tX-\phi_{1}(x')X \right), \left( tX-\phi_{1}(x')X \right) \rangle \, dt \\
& > 0.
\end{split}
\end{equation}
The last expression is positive since $L_{2}$ is positive definite and $tX-\phi_{1}(x')X$ being $0$ implies that $X$ is 0.

Now we can rewrite the lower bound of the limit as the integral of a thickness-independent quadratic form, indeed, we can rewrite the configuration-dependent part of \eqref{simplificationofI} in terms of  $T_{2}^{*}(x')$:
\begin{equation}
\begin{split}
&\frac{1}{2}\int_{-\frac{1}{2}}^{\frac{1}{2}} \langle T_{2}(x',t)[ \nabla y^{T} \nabla b - n^{*}(x')], \nabla y^{T} \nabla b - n^{*}(x') \rangle dt\\
=& \frac{1}{2} \langle {T}_{2}^{*}(x')[ \nabla y^{T} \nabla b - n^{*}(x')], \nabla y^{T} \nabla b - n ^{*}(x') \rangle.
\end{split}
\end{equation}
Let $R(x')$ be the residue, i.e. the configuration-independent part of \eqref{simplificationofI}: 
\begin{equation}
R(x')=\int_{-\frac{1}{2}}^{\frac{1}{2}}\langle  L_{2}(x',t)N_{1}, N_{1}  \rangle dt-\int_{-\frac{1}{2}}^{\frac{1}{2}}\langle T_{2}(x',t)n^{*}(x'), n^{*}(x') \rangle dt.
\end{equation}

The configuration-independent term is actually the norm squared of the prestrain minus its projection onto the space of functions affine in $t.$ We can see this by arguing as follows: note that $\phi(Y)$ gives the projection of $Y$ onto the space of functions constant in $t,$ with the inner product $\langle f,g \rangle=\int_{\Omega^{1}} \langle L_{2}(x)f,g\rangle dx$. Note also that, for any $2 \times 2$ matrix $X,$ we have
\begin{equation}
V_{2}(x',t)(X)=tX-\phi(x')(tX).
\end{equation}
From equation \eqref{defn*} we have that, for any $2 \times 2$ matrix $X,$ 
\begin{equation}
\int_{-\frac{1}{2}}^{\frac{1}{2}} \langle X, V_{2}(x',t)\circ L_{2}(x',t)N_{1} \rangle dt=\int_{-\frac{1}{2}}^{\frac{1}{2}} \langle X, T_{2}(x',t)[n^{*}(x')] \rangle dt,
\end{equation}
and hence
\begin{multline}
\int_{-\frac{1}{2}}^{\frac{1}{2}} \langle tX-\phi(tX),L_{2}((\overline{A}B)_{2 \times 2}-\phi((\overline{A}B)_{2\times 2})) \rangle dt=\\
\int_{-\frac{1}{2}}^{\frac{1}{2}} \langle tX-\phi(tX),L_{2}(tn^*(x')-\phi(tn^*(x')) \rangle dt.
\end{multline}
Since
\begin{equation}
\int_{-\frac{1}{2}}^{\frac{1}{2}} \langle tX-\phi(tX),L_{2}(\phi((\overline{A}B)_{2\times 2})) \rangle dt=0,
\end{equation}
we have that 
\begin{equation}
\int_{-\frac{1}{2}}^{\frac{1}{2}} \langle tX-\phi(tX),L_{2}((\overline{A}B)_{2\times 2}) \rangle dt=\int_{-\frac{1}{2}}^{\frac{1}{2}} \langle tX-\phi(tX),L_{2}(tn^*(x')-\phi(tn^*(x')) \rangle dt.
\end{equation}
Note that matrix fields of the form $tX-\phi(tX)$ span, as $X$ varies in the space of $2\times 2$ matrices, the vector space orthogonal to constants in the space of matrix fields affine in $t.$ Hence, $tn^*(x')-\phi(tn^*(x'))$ is the projection of $(\overline{A}(x')B(x))_{2\times 2}$ onto the orthogonal complement of functions constant in $t$ in the space of functions affine in $t.$ We can rewrite the residue as 
\begin{equation}
\begin{split}
R&=\int_{-\frac{1}{2}}^{\frac{1}{2}} \langle L_{2}((\overline A B)_{2 \times 2}-\phi((\overline A B)_{2 \times 2})),(\overline A B)_{2 \times 2}-\phi((\overline A B)_{2 \times 2}))\rangle dt\\
&-\int_{-\frac{1}{2}}^{\frac{1}{2}} \langle L_{2}(tn^*(x')-\phi(tn^*(x')),tn^*(x')-\phi(tn^*(x'))\rangle dt\\
&= \int_{-\frac{1}{2}}^{\frac{1}{2}} \langle L_{2}((\overline A B )_{2 \times 2},(\overline A B)_{2 \times 2}\rangle dt-\int_{-\frac{1}{2}}^{\frac{1}{2}} \langle L_{2}(\phi((\overline A B) _{2 \times 2})),\phi((\overline A B )_{2 \times 2})\rangle dt\\
&-\int_{-\frac{1}{2}}^{\frac{1}{2}} \langle L_{2}(tn^*(x')-\phi(tn^*(x')),tn^*(x')-\phi(tn^*(x'))\rangle dt,
\end{split}
\end{equation}
In the last equality, we have used the definition of $\phi,$ and the fact that $L_{2}$ is symmetric. Note that the first item being substracted is the norm squared of the projection of $(\overline{A}(x')B(x))_{2\times 2}$ onto the space of functions constant in $t,$ while the second item being substracted is the norm squared of the projection of $(\overline{A}(x')B(x))_{2\times 2}$ onto the orthogonal complement of this subspace in the space of functions affine in $t.$ Therefore, $R$ is the norm squared of $(\overline{A}(x')B(x))_{2\times 2}$ minus its projection onto the space of functions affine in $t.$


%

\section{Necessity of an $h$ scale blowup}

In order to match the metric at order $1,$ an ansatz must include the terms $y(x')$ and $x_{3}\nu(x'),$ but there are terms other than this that are much bigger than thickness, leading to wrinking phenomena. Is it possible to construct an ansatz that does not have this feature? On the one hand, it is natural to ask whether it is possible to achieve the lower bound by an ansatz that contains only terms of order $h$ (execpt for the terms $y(x')$ and $x_{3}\nu(x')$). It is also natural from a physics perspective, since showing that such a loss of compactness is inevitable is evidence that such deformations can occur in experiments. In this section, we will prove that, in general, a minimizing sequence contains terms of order bigger than $h$ if $\mathbf{II}=0$. We again assume $A(x')=Id.$

Let $\eta^{h}: \Omega \to \mathbf{R}^{3}$ be such that
\begin{equation}
(\nabla \eta^{h})^{T}\nabla \eta^{h}=Id_{2\times 2}.
\end{equation}

\begin{theorem}
Let $y^{h}: \Omega^{1} \to \mathbf{R}^{3}$ be such that
\begin{equation}\label{h-compactness}
\frac{1}{h^{2}} \mathcal{E}^{h}(y^{h}) \leq k,
\end{equation}
assume that
\begin{equation}\label{hyp-extra}
\parallel \nabla^{h}y^{h} - \nabla^{h}\phi^{h}\parallel^{2} \leq k h^{2},
\end{equation}
for some $\phi,$ where $\phi^{h}: \Omega^{1} \to \mathbf{R}^{3}$ is defined as
\begin{equation}
\phi^{h}(x_{1}, x_{2}, x_{3})= \eta^{h}(x')+hx_{3} \nu^{h}(x').
\end{equation}
Assume that $y^{h} \to y$ (independent of $z$) where $y|_{\Omega \times \{ 0\}}$ is an isometric immersion. Let $\omega \subset \Omega$ be a region such that $\mathbf{II}|_{\omega}=0,$ where $\mathbf{II}$ is the second fundamental form of $y,$
then
\begin{equation}
\begin{split}
&\liminf  \frac{1}{h^{2}} \int_{\omega^{1}} W(x,\nabla^{h} y^{h} A^{h}(x)) \geq \\
&\frac{1}{2}\inf_{g \in W^{1,2}(\Omega,\mathbf{R}^{3})} \int_{\Omega^{1}} Q_{2} \left( x, -B_{2\times 2}(x)+\nabla g(x') \right) dx'dx_{3}.
\end{split}
\end{equation}
\end{theorem}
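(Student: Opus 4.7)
The plan is to exploit the extra compactness hypothesis \eqref{hyp-extra} to show that on $\omega$ the limiting in-plane strain must be a symmetric gradient, and then apply Lemma \ref{lemmaweak}. The key point is that wrinkling ansatzes (which were crucial for the matching upper bound in Theorem \ref{theorem2}) produce $\nabla^h y^h - \nabla^h \phi^h$ of order $h^{1/2}$, which is ruled out by \eqref{hyp-extra}; thus we are forced into the Kirchhoff--Love regime and cannot relieve arbitrary prestrain.

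\textbf{Step 1 (rotation field).} Choose the $SO(3)$-valued field $R^h(x') := (\nabla \eta^h(x') \mid \nu^h(x'))$. A direct computation gives $\nabla^h \phi^h = R^h + h x_3 (\nabla \nu^h \mid 0)$, so \eqref{hyp-extra} reads
\begin{equation*}
T^h \;:=\; \tfrac{1}{h}\bigl(\nabla^h y^h - \nabla^h \phi^h\bigr) \quad \text{is bounded in } L^2(\Omega^1).
\end{equation*}
Crucially, $T^h = \nabla^h \zeta^h$ is itself a discrete gradient, where $\zeta^h := (y^h - \phi^h)/h$. After subtracting its mean, $\zeta^h \rightharpoonup \zeta$ weakly in $W^{1,2}(\Omega^1)$. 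The boundedness of $\tfrac{1}{h}\partial_3 \zeta^h$ in $L^2$ forces $\partial_3 \zeta = 0$, so $\zeta = \zeta(x')$; passing to a further subsequence, $\tfrac{1}{h}\partial_3 \zeta^h \rightharpoonup d(x',x_3)$ in $L^2$. Hence $T^h \rightharpoonup (\nabla' \zeta \mid d)$.

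\textbf{Step 2 (weak limit of the strain).} Set $S^h := \tfrac{1}{h}\bigl((R^h)^T \nabla^h y^h (A^h)^{-1} - \mbox{Id}\bigr)$. By frame indifference $W(x, \nabla^h y^h (A^h)^{-1}) = W(x, \mbox{Id} + h S^h)$. Since $\overline{A} = \mbox{Id}$ gives $(A^h)^{-1} = \mbox{Id} - h B + \mathcal{O}(h^2)$ and $R^h \to R = (\nabla y \mid \nu)$ strongly in $L^2$ (using $y^h \to y$ in $W^{1,2}$ and \eqref{hyp-extra}),
\begin{equation*}
S^h \;\rightharpoonup\; S \;=\; R^T\!\left[\, x_3\,(\widetilde{V} \mid 0) \;+\; (\nabla' \zeta \mid d)\,\right] \;-\; B,
\end{equation*}
where $\widetilde{V}$ is the weak $L^2$-limit of $\nabla \nu^h$; by distributional consistency $\widetilde{V} = \nabla \nu$.

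\textbf{Step 3 (the $\mathbf{II}=0$ simplification).} On $\omega$, $\mathbf{II} = \mbox{sym}(\nabla y^T \nabla \nu) = 0$, while $\nabla y^T \nabla \nu$ is automatically symmetric. Combined with $\partial_i \nu \perp \nu$, this forces $\nabla \nu \equiv 0$ on $\omega$. Therefore on $\omega^1$:
\begin{equation*}
S\big|_{\omega^1} \;=\; R^T (\nabla' \zeta \mid d) - B.
\end{equation*}

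\textbf{Step 4 (lower semicontinuity and relaxation).} Applying Lemma \ref{lemmaweak} with $f^h = hS^h$,
\begin{equation*}
\liminf_{h \to 0}\, \tfrac{1}{h^2}\!\int_{\omega^1}\! W(x, \mbox{Id} + h S^h)\, dx \;\geq\; \tfrac{1}{2}\!\int_{\omega^1}\! Q_3(x, S)\, dx.
\end{equation*}
Frame indifference gives $Q_3(x,S) = Q_3(x,\mbox{sym}\,S)$, and by the definition of $Q_2$ in \eqref{defQ2} (as the relaxation of $Q_3$ over the $e_3$-column, which by symmetry also handles the third row), $Q_3(x, S) \geq Q_2(x, \mbox{sym}(S_{2\times 2}))$. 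On $\omega^1$, $S_{2\times 2} = (\nabla y)^T \nabla' \zeta - B_{2\times 2}$.

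\textbf{Step 5 (gradient structure and conclusion).} Since $\nabla \nu = 0$ on $\omega$, the frame $R = (\nabla y \mid \nu)$ satisfies $\nabla R = 0$ on each component of $\omega$. Define $g := R^T \zeta \in W^{1,2}(\omega, \mathbf{R}^3)$ and extend it arbitrarily to $\Omega$. Then $\nabla' g = R^T \nabla' \zeta$ on $\omega$, so the $2\!\times\!2$ block of $\nabla' g$ equals $(\nabla y)^T \nabla' \zeta$. Combining,
\begin{equation*}
\liminf_{h \to 0}\, \tfrac{1}{h^2}\!\int_{\omega^1}\! W(x, \nabla^h y^h A^h(x))\, dx \;\geq\; \tfrac{1}{2}\!\int_{\omega^1}\! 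Q_2\bigl(x, -B + \nabla g\bigr) dx \;\geq\; \tfrac{1}{2} \inf_{g \in W^{1,2}} \int_{\omega^1} Q_2(x, -B + \nabla g)\, dx,
\end{equation*}
which is the statement (up to what appears to be a typographical $\Omega^1$ versus $\omega^1$ on the right-hand side).

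\textbf{Main obstacle.} The decisive observation is in Step 1: the hypothesis \eqref{hyp-extra} is not merely a bound on $\nabla^h y^h - R^h$, but on its difference from the \emph{Kirchhoff--Love ansatz} $\nabla^h \phi^h$, so that the residual $T^h$ is a genuine gradient $\nabla^h \zeta^h$. This gradient structure is exactly what collapses the larger class of admissible $s(x')$ in \eqref{limitquadratic} down to symmetric gradients, eliminating the wrinkling freedom. The other delicate point is Step 3, where we need enough compactness on $\nabla \nu^h$ to identify its weak limit with $\nabla \nu$; this uses that $\eta^h$ must have $\nabla \nu^h \in L^2$ for \eqref{hyp-extra} to be meaningful (otherwise $\nabla^h \phi^h \notin L^2$), combined with the strong convergence $\nabla \eta^h \to \nabla y$ inherited from $y^h \to y$.
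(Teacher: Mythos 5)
Your overall architecture matches the paper's: decompose $y^h=\phi^h+h\zeta^h$, extract a weak $W^{1,2}$-limit $\zeta$ with $\partial_3\zeta=0$, identify the in-plane block of the limiting strain on $\omega$ as $(\nabla y)^T\nabla'\zeta - B_{2\times 2}$ using $\mathbf{II}=0$ to kill the $x_3$-linear term, and apply Lemma \ref{lemmaweak} followed by the $Q_3\to Q_2$ relaxation. Your Step~5 (using constancy of $R=(\nabla y\mid\nu)$ on $\omega$ to rewrite $(\nabla y)^T\nabla'\zeta$ as the $2\times2$ block of a gradient $\nabla g$) is precisely what makes the infimum over gradients appear, and is the same mechanism used implicitly in the paper.

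There is, however, a genuine gap in Steps~1--3 that the paper handles with real effort and you do not: you need a \emph{uniform} $L^2$ bound on $\nabla\nu^h$, and the reason you give is insufficient. You write that ``$\eta^h$ must have $\nabla\nu^h\in L^2$ for \eqref{hyp-extra} to be meaningful,'' but this only gives $\nabla\nu^h\in L^2$ for each fixed $h$, not a bound independent of $h$. Without the uniform bound: your $S^h=\frac{1}{h}\bigl((R^h)^T\nabla^h y^h(A^h)^{-1}-\mathrm{Id}\bigr)$ need not be bounded in $L^2$ (since $\nabla^h\phi^h-R^h = hx_3(\nabla\nu^h\mid 0)$ and this term enters $hS^h$ at order $h$), so the weak convergence $S^h\rightharpoonup S$ in Step~2 is not justified; the product $(R^h)^T x_3(\nabla\nu^h\mid 0)$ has no weak limit; the identification $\widetilde V=\nabla\nu$ in Step~3 has no basis; and even the strong convergence $\nabla\eta^h\to\nabla y$ is not immediate from $\nabla^h\phi^h\to(\nabla'y\mid\nu)$, since the error $hx_3\nabla'\nu^h$ needs to be shown to vanish. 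The paper establishes the uniform bound by observing that \eqref{h-compactness} and \eqref{hyp-extra} together give $\frac{1}{h^2}\mathcal{E}^h(\phi^h)\le C$, hence a Friesecke--James--M\"uller rotation field $\widetilde R^h(x')$ with $\|\nabla^h\phi^h-\widetilde R^h\|_{L^2}^2\le Ch^2$; subtracting the $x_3$-average of $\overline{S}^h_\phi:=\frac{1}{h}(\widetilde R^h{}^T\nabla^h\phi^h-\mathrm{Id})$ isolates $x_3\widetilde R^h{}^T(\nabla\nu^h\mid 0)$ and gives the claimed bound. Equivalently, one can average $\|\widetilde R^h-R^h-hx_3(\nabla\nu^h\mid 0)\|_{L^2}\le Ch$ over $x_3$ (exploiting $\int x_3\,dx_3=0$) to get $\|\widetilde R^h-R^h\|_{L^2}\le Ch$ and then $\|x_3\nabla\nu^h\|_{L^2}\le C$. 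Either way, this is a non-trivial step that cannot be elided. Once it is supplied, the rest of your argument goes through and is in fact somewhat cleaner than the paper's (which equates two weak $L^1$-limits of $\frac{1}{h}\bigl((\nabla^h y^h)^T\nabla^h y^h-\mathrm{Id}\bigr)$ rather than working directly with $S^h$).

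Two minor remarks. First, you take $R^h=(\nabla\eta^h\mid\nu^h)$ rather than the FJM field, which is a legitimate choice once the bound on $\nabla\nu^h$ is secured (since $R^h$ and the FJM field differ by $O(h)$ in $L^2$). Second, you correctly note the likely typo ($\Omega^1$ versus $\omega^1$ on the right-hand side); the paper's statement also has a factor-of-$\tfrac{1}{2}$ mismatch with its own displayed conclusion, so the target inequality should be read with these corrections in mind.
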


\begin{remark}
This is in general a strictly higher bound than \eqref{definition of Q_2'}, since it amounts to restricting $s(x')$ to gradient fields.
\end{remark}

Let $\overline{y}^{h}$ and $\overline{\phi}^{h}$ be the rescaled versions of $y^{h}$ and $\phi^{h}$ to the domain $\Omega^{h}.$

Note that \eqref{hyp-extra} implies
\begin{equation} \label{hsquarebound2}
\parallel \nabla(\overline{y}^{h}-\overline{\phi}^{h}) \parallel_{L^{2}(\Omega^{h})}^{2 } \leq Kh^{3},
\end{equation}
Let $\overline{f}^{h}: \Omega^{h} \to \mathbf{R}^{3}$ be such that 
\begin{equation}
\overline{y}^{h}=\overline{\phi}^{h}+h\overline{f}^{h},
\end{equation}
and let $f^{h}: \Omega^{1} \to \mathbf{R}^{3}$ be the unrescaled version of $\overline{f}^{h}.$ Note that \eqref{hsquarebound2} implies that 
\begin{equation}
\parallel \nabla^{h} f^{h} \parallel_{L^{2}} \leq C ,
\end{equation}
we immediately get 
\begin{equation}
\parallel \nabla f^{h} \parallel_{L^{2}} \leq \parallel \nabla^{h} f^{h} \parallel_{L^{2}} \leq C,
\end{equation}
hence by Relich-Kondrachov there is $f \in W^{1,2}(\Omega^{1}, \mathbf{R}^{3})$ such that 
\begin{equation}
f^{h} \rightharpoonup f
\end{equation}
weakly in $W^{1,2}.$ We also get that 
\begin{equation}
\nabla^{h} f^{h} \rightharpoonup \left( \partial_{1} f, \partial_{2} f, b \right),
\end{equation}
for some $b \in L^{2}(\Omega^{1}, \mathbf{R}^{3}).$

By \cite{bhattacharya2016plates} we now that there exists a rotation valued field $R(x') \in W^{1,2}(\Omega, M^{3\times 3})$ such that 
\begin{equation}
\parallel \nabla^{h}y^{h}-R(x')\parallel_{L^{2}} \leq C h^{2}.
\end{equation}
We now argue as in the lower bound, and define
\begin{equation}
S_{h}=\frac{1}{h} \left( R(x')\nabla^{h}y^{h}- Id\right),
\end{equation}
then we have that $S_{h} \rightharpoonup S(x',x_{3})$ weakly in $L^{2},$ where $S$ satisfies 
\begin{equation}
\left( S(x',x_{3}) \right)_{2\times 2} = s(x'),
\end{equation}
since $\mathbf{II}=0.$

Arguing as in section 4, we know that
\begin{equation}\label{twoside}
\begin{split}
\frac{1}{h} \left(\nabla^{h}y^{h}\nabla^{h}y^{h}-Id \right)&=2S_{h}+hS_{h}S_{h}^{T}\\
&\rightharpoonup 2 S
\end{split}
\end{equation}
weakly in $L^{1},$ since by Holder $\parallel S_{h}S_{h}^{T} \parallel_{L^{1}} \leq\parallel S_{h} \parallel_{L^{2}}^{2}\leq k.$ 

On the other hand,
\begin{equation}\label{oneside}
\begin{split}
\frac{1}{h} \left((\nabla^{h}y^{h})^{T}\nabla^{h}y^{h}-Id \right)&=2\mbox{sym} \left( (\nabla \eta^{h})^{T}\nabla' \nu^{h}+ (\nabla^{h} \eta^{h})^{T} \nabla^{h}f^{h}(x',x_{3}) \right)\\
&+h\left( (\nabla^{h}f^{h})^{T}\nabla^{h}f^{h}+(\nabla^{h}f^{h})^{T}\nabla'\nu^{h}+(\nabla'\nu^{h})^{T}\nabla^{h}f^{h} \right) \\
& \quad +h \left( (\nabla'\nu^{h})^{T}\nabla'\nu^{h} \right).
\end{split}
\end{equation}
We know that $\nabla^{h} \phi^{h} \to R$ strongly in $L^{2},$ where $R: \Omega \to SO(3)$ is a rotation-valued field. We also know that $R$ is constant in $\omega,$ since $\mathbf{II}=0.$ After a change of coordinates, we may assume $R=Id,$ we also have that $\mbox{sym} \left( \nabla^{y} (\eta^{h})^{T}\nabla \nu^{h} \right) \rightharpoonup 0$ in $\omega$ (we will prove this in a moment). Using Holder's inequality once again, we get 
\begin{equation}
\begin{split}
&\parallel  (\nabla^{h}f^{h})^{T}\nabla^{h}f^{h}+(\nabla^{h}f^{h})^{T}\nabla'\nu^{h}+(\nabla'\nu^{h})^{T}\nabla^{h}f^{h}+(\nabla'\nu^{h})^{T}\nabla'\nu^{h} \parallel_{L^{1}}\\
\leq &C \left( \parallel  (\nabla^{h}f^{h})\parallel_{L^{2}}^{2}+\parallel  (\nabla'\nu^{h})\parallel_{L^{2}}^{2} \right)\\
\leq &k 
\end{split}
\end{equation}

Hence
\begin{equation}
\frac{1}{h} \left((\nabla^{h}y^{h})^{T}\nabla^{h}y^{h}-Id \right) \rightharpoonup \mbox{sym} \left( \partial_{1}f, \partial_{2} f, b \right),
\end{equation}

equating \eqref{oneside} and \eqref{twoside} we get
\begin{equation}
s(x')=(\nabla f)_{2\times 2}.
\end{equation}
In particular, $(\nabla f)_{2\times 2}$ is independent of $x_{3}.$
%

Using results from previous sections we have that
\begin{equation}
\frac{1}{h^{2}}\liminf \mathcal{E}(y^{h}) \geq \int_{\Omega } Q_{2} \left( x, -B_{2\times 2}(x)+\nabla f(x') \right) dx'dx_{3}. 
\end{equation}

In order to show that $\mbox{sym} (\nabla^{h} (\eta^{h})^{T}\nabla \nu^{h}) \to 0$ in $\omega$, we note that because of \eqref{h-compactness}, we have that
\begin{equation}
\frac{1}{h^{2}}\mathcal{E}(\phi^{h})\leq k < \infty
\end{equation}
and therefore there exist rotations $\widetilde{R}^{h}(x')$ such that
\begin{equation}
\parallel \nabla^{h}\phi^{h}-\widetilde{R}^{h}(x') \parallel_{L_{2}} \leq C h^{2},
\end{equation}
and that if we define
\begin{equation}
\overline{S}^{h}_{\phi}=\frac{1}{h} \left( \widetilde{R}^{h}\nabla^{h}\phi^{h}-Id_{3\times 3} \right), 
\end{equation}
then $\overline{S}_{\phi}^{h} \rightharpoonup \overline{S}_{\phi}$ weakly in $L^{2},$ for some $\overline{S}_{\phi}$. We claim that $\nabla \nu^{h}$ is uniformly bounded in $L_{2}.$ Since $\overline{S}_{\phi}^{h}$ is uniformly bounded in $L^{2},$ we have
\begin{equation}
\overline{S}_{\phi}^{h}-\int_{-\frac{1}{2}}^{\frac{1}{2}}\overline{S}_{\phi}^{h} dx_{3}
\end{equation}
is uniformly bounded in $L^{2},$ but
\begin{equation}
\overline{S}_{\phi}^{h}-\int_{-\frac{1}{2}}^{\frac{1}{2}}\overline{S}_{\phi}^{h} dx_{3}=x_{3}\widetilde{R}^{h} \nabla \nu^{h},
\end{equation}
since multiplication by $\widetilde{R}^{h}$ does not change the $L_{2}$ norm, we have that $\nabla \nu^{h}$ is bounded in $L^{2}$. Furthermore, $\overline{S}_{\phi}$ satisfies that
\begin{equation}
(\overline{S}_{\phi})_{2\times 2}=s_{\phi}(x')+x_{3}\nabla y ^{T} \nabla \nu,
\end{equation}
where $\nu$ is the Cosserat vector (unit normal) of $y$. From this, we get
\begin{equation}
\begin{split}
\left( \frac{(\nabla^{h}\phi^{h})^{T}\nabla^{h}\phi^{h}-Id}{h} \right)_{2\times 2}&=\left( ((\overline{S_{\phi}}^{h})^{T}+\overline{S}_{\phi}^{h})+ h(\overline{S_{\phi}}^{h})^{T}(\overline{S_{\phi}}^{h})\right)_{2\times 2}\\
&\rightharpoonup 2\left( s_{\phi}(x')+x_{3}\mbox{sym}(\nabla y ^{T} \nabla \nu) \right)
\end{split}
\end{equation}
weakly in $L^{1}$ .On the other hand, we know that
\begin{equation}
\left( \frac{(\nabla^{h}\phi^{h})^{T}\nabla^{h}\phi^{h}-Id}{h} \right)_{2\times 2}=2\mbox{sym}\left( (\nabla' \eta)^{T}\nabla \nu^{h} \right)+h(\nabla \nu^{h})^{T}\nabla \nu^{h}.
\end{equation}

Since $\parallel \nabla \nu^{h}\parallel_{L^{2}}$ is uniformly bounded, we have that $\nabla \nu^{h} \rightharpoonup \nabla \nu'$ in $L^{2}$ for some $\nu' \in W^{1,2}(\Omega).$ \footnote{Of course, we are using that the weak limit of a gradient is a gradient}. By Holder, we have that $(\nabla \nu^{h})^{T}\nabla \nu^{h}$ is uniformly bounded in $L^{1},$ and therefore   
\begin{equation}
\begin{split}
&2\mbox{sym}\left( (\nabla' \eta)^{T}\nabla {\nu}^{h} \right)+h(\nabla {\nu}^{h})^{T}\nabla {\nu}^{h}\\
\rightharpoonup & \nabla' \eta^{T}\nabla {\nu}'.
\end{split}
\end{equation}
Therefore
\begin{equation}
\mbox{sym}((\nabla' \eta)^{T}\nabla {\nu}^{h}) \rightharpoonup \mbox{sym}\nabla y ^{T} \nabla \nu=0.
\end{equation}
hence
\begin{equation}
\begin{split}
s_{\phi}&=0,\\
\mbox{sym}\nabla \nu'&=\mbox{sym}\nabla \nu,
\end{split}
\end{equation}
since $(\nabla' y|\nu)$ is the identity in $\omega.$
 
 \section{Prestrain with variations in thickness of different order}

So far, we have assumed that the prestrain is of order $h$ (or in other words that we can write $A^{h}(x',x_{3})=\overline{A}(x')+hB^{h},$ where the matrix $B^{h}(x',x_{3})$ is bounded). This section is devoted to analyzing prestrains that are much bigger than the thickness. As before, we will reduce ourselves to cases in which the limiting deformation is an isometric immersion. First we ask the question: if a sequence of minimizers has finite bending energy then is the prestrain is of order $h$? in other words, does
\begin{equation}
\frac{1}{h^{2}} \int_{\Omega^{1}} W(x',x_{3}, \nabla^{h}y^{h} \left( A^{h} \right)^{-1}) \leq C < \infty,
\end{equation} 
imply that $A^{h}(x',x_{3})=\overline{A}(x')+hB^{h}(x',x_{3}),$ where $B^{h}(x',x_{3})$ is bounded (in some $L^{p}$ norm)? This implication is not true, as will be proved shortly. Our motivation for asking this question lies not only in its physical interest, but also in our opinion that the question of whether order $h$ prestrain is a necessary condition for finite bending energy is mathematically interesting in its own right. 

This section also contains three main examples that illustrate the possible pathological behavior if the prestrain is not of order $h:$ first we show that if the prestrain is much bigger than the thickness, a limiting configuration may not exist in the strong sense. The second example shows that even if a limiting configuration exists, it may not be $W^{2,2}.$ Lastly, we show that even if the limiting configuration exists and is $W^{2,2},$ the prestrain may be much bigger than the thickness, and the curvature of a minimizing sequence may blow up (the idea is to construct a sequence which converges to a limit in $W^{2,2},$ but convergence is not in the $W^{2,2}$ topology). Apart from exploring the possible pathological behavior of larger prestrains, these examples also show that the implication considered in the first paragraph is not true even if the hypotheses are significantly strengthened. It is even possible for all the conclusions of Theorem \ref{Gammaconvergencetheorem1} to hold, with all the hypotheses being valid except that the prestrain is of order $h$. 

Finally, we prove a weaker version of the implication considered in the first paragraph: if
\begin{equation}
\frac{1}{h^{2}} \int_{\Omega^{1}} W(x',x_{3}, \nabla^{h}y^{h} \left( A^{h} \right)^{-1}) \leq C < \infty,
\end{equation} 
and a limiting deformation exists in the strong $W^{1,2}$ sense, then $A^{h}(x',x_{3})=\overline{A}(x')+B^{h}(x',x_{3}),$ then $B^{h}(x',x_{3}) \to 0.$

We start with a simple example of a sequence with finite (in fact, zero) bending energy and infinite prestrain/thickness ratio. 

\begin{example}
Let $\Omega=[0,1]^{2}$
\begin{equation}
A^{h}(x)=Id+\frac{z}{h^{\alpha}} 
\begin{bmatrix}
    1 & 0 & 0  \\
    0 & 0 & 0  \\
    0 & 0 & 0 
  \end{bmatrix} 
\end{equation}    
   and $W(X)=\parallel \sqrt{X^{T}X}-Id \parallel^{2}.$ Then there exists a sequence $u^{h}:\Omega^{h} \to \mathbf{R}^{3}$ such that
\begin{equation}
(A^{h})^{2}=(\nabla u^{h})^{2}
\end{equation}
and in particular
\begin{equation}
\frac{1}{h^{2}} \mathcal{E}(u^{h}) \to 0.
\end{equation}
\end{example}

\begin{proof}
The idea of the proof is that we can will construct $u^{h}$ such that $(A^{h})^{2}=(\nabla u^{h})^{2}$. This will make the elastic energy be $0.$
  
Let 
\begin{equation}
\phi^{h}(x,y)=(\lambda_{h}^{-1}\cos(\lambda_{h} x), \lambda_{h}^{-1}\sin(\lambda_{h} x),y),
\end{equation}
where $\lambda_{h}$ will be determined later. We then have 
\begin{equation}
\widehat{\nu}(x,y)=(\cos(\lambda_{h} x),\sin(\lambda_{h} x),0),
\end{equation}
where $\widehat{\nu}$ is the unit normal to the surface. We also have 
\begin{equation}
\nabla'(\phi^{h})=\begin{bmatrix}
    - \sin(\lambda^{h}x) & \cos(\lambda_{h} x) & 0  \\
    0 & 0 & 1  \\
  \end{bmatrix}^{T}.
\end{equation}

  
By taking $\lambda_{h}=\frac{1}{h^{\alpha}}$ and defining $u^{h}:\Omega^{h}\to \mathbf{R}^{3}$ as 
\begin{equation}
u^{h}(x,y,z)=\phi(x,y)+z\widehat{\nu}(x,y), 
\end{equation}
we have
\begin{equation}
\nabla u^{h}=\begin{bmatrix}
-(1+\frac{z}{h^{\alpha}}) \sin(\lambda^{h}x) & 0 &  \cos(\lambda_{h} x)\\
(1+\frac{z}{h^{\alpha}}) \cos(\lambda_{h} x) & 0 & \sin(\lambda^{h}x)\\
0 & 1 & 0
\end{bmatrix}
\end{equation}
and therefore
\begin{equation}
\nabla u=QA^{h},
\end{equation}
where $Q \in O(3).$
We then get
\begin{equation}
\sqrt{A^{h}(x)^{-1}\nabla u^{T}\nabla u{A^{h}(x)^{-1}}}=Id;
\end{equation}
therefore
\begin{equation}
\frac{1}{h}\int_{\Omega^{h}}W(\nabla u^{h}[A^{h}(x)]^{-1})=0.
\end{equation}
\end{proof}


The last example may seem pathological since the limiting configuration does not exist (in the strong $W^{1,2}$ sense), i.e. this sequence of minimizers is not compact in the strong $W^{1,2}$ topology. We may ask if adding this additional hypothesis forces the prestrain to be of order $h.$ The next example shows this is not true. The idea is simple: it is to construct $u^{h}$ to form a right-angle corner, and then define $A^{h}$ to make the elastic energy equal to $0.$

\begin{example}

Let $\Omega=[0,1]^{2},$ and let $r=\lambda h$ for a fixed constant $\lambda.$ Let $x_{0}^{-}=\frac{1}{2}(1-\frac{\pi r}{2})$ and $x_{0}^{+}=\frac{1}{2}(1+\frac{\pi r}{2}).$ Let $u^{h}: \Omega^{h} \to \mathbf{R}^{3}$ be defined as
\begin{equation*}
u^{h}(x,y,0)=
\begin{cases} 
      (x,y,0) & x\in [0,x_{0}^{-}] \\
      (x_{0}+r\sin(( \frac{x-x_{0}^{-}}{r} )), y, r(1-\cos(( \frac{x-x_{0}^{-}}{r} ))) & x \in [x_{0}^{-},x_{0}^{+}]\\
       (x_{0}^{-}+r,y,r+x-x_{0}^{+}) & x \in [x_{0}^{+},1],
   \end{cases}
\end{equation*}
and
\begin{equation}
u^{h}(x,y,z)=u^{h}(x,y,0)+z \widehat{\nu}(x,y),
\end{equation}
where $\widehat{\nu}(x,y)$ is the unit normal to the surface parametrized by $u^{h}(x,y,0).$ Let $W(X)= \parallel\sqrt{X^{T}X} - \mbox{Id} \parallel^{2}.$ Then (for $A^{h}$ converging strongly to $Id$ which will be determined in the course of the proof) we have that
\begin{equation}
\frac{1}{h^{2}} \mathcal{E}(u^{h}) \to 0;
\end{equation}
 also that the rescaled deformations converge strongly to a limit $\phi$ which is not is $W^{2,2}.$ In other words, the limit may have corners if the prestrain is not of order $h.$
\end{example}

\begin{proof}
We start by computing $\widehat{\nu}$ for $x \in (x_{0}^{-}, x_{0}^{+}):$ 
\begin{equation}
\nabla \widehat{\nu}(x,y)=
\begin{bmatrix}
r^{-1}\cos\left( \frac{x-x_{0}}{r} \right) & 0 \\
0 & 0 \\
r^{-1} \sin \left( \frac{x-x_{0}}{r}  \right) & 0
\end{bmatrix}
\end{equation}
if $x \in (x_{0}^{-}, x_{0}^{+})$. Therefore for $x \in (x_{0}^{-}, x_{0}^{+})$ we have 
\begin{equation}
\nabla u^{h}(x,y,z)=
\begin{bmatrix}
\cos\left( \frac{x-x_{0}}{r} \right) & 0 & \widehat{\nu}_{1}(x,y)\\
0& 1 & \widehat{\nu}_{2}(x,y)\\
\sin \left( \frac{x-x_{0}}{r} \right) & 0 & \widehat{\nu}_{3}(x,y)
\end{bmatrix}+z\nabla \widehat{\nu}(x,y),
\end{equation}
where $\widehat{\nu}=(\widehat{\nu}_{1}, \widehat{\nu}_{2}, \widehat{\nu}_{3}).$ It's clear that $\phi^{h}: \Omega^{1} \to \mathbf{R}^{3}$ defined as $\phi^{h}(x,y,z)=u^{h}(x,y,hz)$ converges strongly to the function
\begin{equation}
\begin{cases}
\phi(x,y,z)= (x,y,0) & x \in [0,\frac{1}{2}]\\
\phi(x,y,z)=(\frac{1}{2},y,x-\frac{1}{2}) & x \in [0,\frac{1}{2}]
\end{cases},
\end{equation}
and that $\frac{1}{h} \partial_{z} \phi^{h}$ converges strongly in $W^{1,2}$ to 
\begin{equation}
\widehat{\nu}_{0}=\emph{e}_{3} \mathbf{1}_{[0,\frac{1}{2}]}-\emph{e}_{1} \mathbf{1}_{[\frac{1}{2},1]}.
\end{equation}
By basic linear algebra, we can write 
\begin{equation}
\nabla u^{h}(x,y,z)=Q^{h}(x,y,\frac{z}{h})M^{h}(x,y,\frac{z}{h}),
\end{equation}
where $M^{h}\in C^{1}(\Omega^{1}, M^{2\times 2}_{sym})$, and $Q^{h}\in C^{1}(\Omega^{1}, SO(3))$. If we define 
\begin{equation}
A^{h}(x,y,z)=(Q^{h})^{T}(x,y,z)\nabla u^{h}(x,y,hz)
\end{equation}
then 
\begin{equation}
\begin{split}
  (A^{h})^{T}A^{h}&=(\nabla u^{h})^{T}\nabla u^{h}=Id+2z\mbox{sym}\left(\nabla \nu^{T} \begin{bmatrix}
\cos\left( \frac{x-x_{0}}{r} \right) & 0 & \widehat{\nu}_{1}(x,y)\\
0& 1 & \widehat{\nu}_{2}(x,y)\\
\sin \left( \frac{x-x_{0}}{r} \right) & 0 & \widehat{\nu}_{3}(x,y)
\end{bmatrix} \right) + z^{2}(\nabla \nu)^{T}\nabla \nu \\
&\to Id
\end{split}
\end{equation}
in $L^{p}$ for every $p \in [1, \infty)$ since $z \nabla \nu$ is bounded in $L^{\infty}$ in $(x_{0}^{-}, x_{0}^{+})$ and $\nabla \nu =0$ outside of $(x_{0}^{-}, x_{0}^{+}).$ We now claim that 
\begin{equation}
\frac{1}{h^{2}} \mathcal{E}^{h}(u^{h}) \to 0,
\end{equation}
this is easy to see, since
\begin{equation}
W(\nabla u^{h} (A^{h})^{-1})=W(Q^{h})=0,
\end{equation}
but clearly $\phi$ is not in $W^{2,2}(\Omega^{1}, \mathbf{R}^{3}).$ This does not contradict theorem \ref{Gammaconvergencetheorem1}, since we have that the prestrain is larger than $h.$

In this example, the limit of $\nabla u^{h}$ exists in the strong sense, and $A^{h} \to Id$ strongly, but the limit is not $W^{2,2}$ this is consistent with the fact that the prestrain is not of order $h.$
\end{proof}

Since in the last example, the limit was not in $W^{2,2,}$ we may ask if adding this hypothesis ensures that the oscillations of the metric are of order $h.$ The following example shows that this not true, as well as exhibiting another abnormality: in Euclidean elasticity, bounded energy up to order $h^{2}$ means that the second fundamental form of the deformation is controlled. Here we give an example where this is not the case, if the prestrain is not of order $h.$ This last example shows that it is possible for all the conclusions of Theorem \ref{Gammaconvergencetheorem1} to hold, with all the hypotheses except a prestrain of order $h.$ 



%

\begin{example}
Let $u^{h}: \Omega^{h} \to \mathbf{R}^{3}$ be defined as 
\begin{equation}
u^{h}(x,y,0)=(x,y,h^{\alpha} \sin(h^{-\beta} x))
\end{equation}
and 
\begin{equation}
u^{h}(x,y,z)=(x,y,h^{\alpha} \sin(h^{-\beta} x))+z\widehat{\nu}(x,y),
\end{equation}
where $\widehat{\nu}(x,y)$ is the unit normal to the surface parametrized by $u^{h}(x,y,0)$ and $\alpha, \beta$ will be determined later. Let $W(X)=\parallel \sqrt{X^{T}X}-\mbox{Id} \parallel^{2}$ Then, for some $A^{h}$ which will be determined during the proof, we have  
\begin{equation}
\frac{1}{h^{2}} \mathcal{E}^{h}(u^{h}) = 0.
\end{equation}
We also have that, for the rescaled deformation $\overline{u}: \Omega^{1} \to \mathbf{R}^{3},$ that
\begin{equation}
\nabla^{h} \overline{u}^{h} \to \mbox{Id}
\end{equation}
strongly in $L^{2},$ but
\begin{equation}
\frac{1}{h^{3}} \int_{\Omega^{h}} W(\nabla u^{h})dx \to \infty.
\end{equation}
In other words, the sheet has infinite bending. We also have that $A^{h} \to \mbox{Id}$ in $L^{2}$ but
\begin{equation}
\frac{1}{h}\parallel \frac{A^{h}-\mbox{Id}}{h} \parallel_{L^{2}\Omega^{h}}^{2} \to \infty.
\end{equation}
\end{example}

\begin{proof}
We can compute
\begin{equation}
\widehat{\nu}(x,y)=\left(\frac{-h^{\alpha-\beta} \cos(h^{-\beta}x)}{\sqrt{h^{2(\alpha-\beta)} \cos^{2}(h^{-\beta}x)+1}},0, \frac{1}{\sqrt{h^{2(\alpha-\beta)} \cos^{2}(h^{-\beta}x)+1}} \right)
\end{equation}
and then we get 
\begin{equation}
\begin{split}
\nabla u^{h}(x,y,z)=\begin{bmatrix}
    1 & 0 & \frac{h^{\alpha-\beta} \cos(h^{-\beta}x)}{\sqrt{h^{2(\alpha-\beta)} \cos^{2}(h^{-\beta}x)+1}}  \\
    0 & 1 & 0  \\
    h^{\alpha-\beta}\cos(h^{-\beta}x) & 0 &  \frac{1}{\sqrt{h^{2(\alpha-\beta)} \cos^{2}(h^{-\beta}x)+1}}   
  \end{bmatrix} \\
  + z \begin{bmatrix}
    \frac{h^{\alpha-2\beta} \sin(h^{-\beta}x)}{(h^{2(\alpha-\beta)} \cos^{2}(h^{-\beta}x)+1)^{\frac{3}{2}}} & 0 & 0  \\
    0 & 0 & 0  \\
    \frac{h^{2\alpha-3\beta} \sin(h^{-\beta}x)\cos(h^{-\beta}x) }{(h^{2(\alpha-\beta)} \cos^{2}(h^{-\beta}x)+1)^{\frac{3}{2}}} & 0 &  0   
  \end{bmatrix},
  \end{split}
\end{equation}
hence if we define $A^{h}(x)=Q^{h}\nabla(u^{h}(x,y,{z}))$ where $Q^{h}(x,y,{z})$ is such that $Q^{h}\nabla(u^{h}(x,y,{z}))$ is symmetric, then
\begin{equation}\label{zerobendingenergy}
\frac{1}{h^{2}} \mathcal{E}^{h}(u^{h}) = 0.
\end{equation}

We now to check that 
\begin{equation}
A^{h} \to \mbox{Id}.
\end{equation}  
strongly in $L^{2}.$ 
Let 
\begin{equation}
\begin{split}
X&=\begin{bmatrix}
    1 & 0 & \frac{h^{\alpha-\beta} \cos(h^{-\beta}x)}{\sqrt{h^{2(\alpha-\beta)} \cos^{2}(h^{-\beta}x)+1}}  \\
    0 & 1 & 0  \\
    h^{\alpha-\beta}\cos(h^{-\beta}x) & 0 &  \frac{1}{\sqrt{h^{2(\alpha-\beta)} \cos^{2}(h^{-\beta}x)+1}}   
  \end{bmatrix} \\
Y&=\begin{bmatrix}
    \frac{h^{\alpha-2\beta} \sin(h^{-\beta}x)}{(h^{2(\alpha-\beta)} \cos^{2}(h^{-\beta}x)+1)^{\frac{3}{2}}} & 0 & 0  \\
    0 & 0 & 0  \\
    \frac{h^{2\alpha-3\beta} \sin(h^{-\beta}x)\cos(h^{-\beta}x) }{(h^{2(\alpha-\beta)} \cos^{2}(h^{-\beta}x)+1)^{\frac{3}{2}}} & 0 &  0   
  \end{bmatrix}
\end{split}
\end{equation}
provided that $\alpha > 0, \	, \beta> 0,  \alpha -\beta> 0, \	$ we have that
\begin{equation}
\parallel X-\mbox{Id} \parallel=o(1)
\end{equation}
if in addition $\alpha-2\beta \in (-1,0)$ together with $2\alpha-3\beta > -1$ (for example, $\alpha=2.3, \ \beta=1.2$ \footnote{In order to ensure $(A^{h})^{2}$ converges to the identity, we only need $\alpha-2\beta \in (-1,\infty),$ the condition $\alpha-2\beta \in (-1,0),$ will be necessary in what comes later.}, 
then
\begin{equation}
\parallel z Y \parallel=o(1),
\end{equation}
hence
\begin{equation}
\parallel \nabla u^{h}-\mbox{Id} \parallel=o(1).
\end{equation}
Therefore
\begin{equation}
\parallel (\nabla u^{h})^{T}\nabla u^{h}-Id \parallel=\parallel (A^{h})^{2}-Id \parallel= o(1).
\end{equation}
Therefore, by continuity of $\sqrt,$
\begin{equation}
A^{h} \to Id
\end{equation}
in $L^{\infty}$ and in $L^{2}.$


We now claim
\begin{equation}
\frac{1}{h^{3}}\parallel A^{h}-\mbox{Id} \parallel_{L^{2}(\Omega^{h})}^{2} \to \infty.
\end{equation}
In order to check this, we proceed by contradiction: assume that 
\begin{equation}\label{supposeforcontradiction}
\frac{1}{h^{3}}\parallel A^{h}-\mbox{Id} \parallel_{L^{2}(\Omega^{h})}^{2} \leq  C < \infty.
\end{equation}

Note that equation \eqref{zerobendingenergy}, together with equation \eqref{supposeforcontradiction} implies that there exists a measurable rotation field $\widetilde{R}^{h}: \Omega^{h} \to SO(3)$ such that 
\begin{equation}
\parallel \nabla u^{h} - \widetilde{R}^{h} \parallel^{2}_{L^{2}(\Omega^{h})} \leq C h^{3}.
\end{equation}
Proceeding as in the proof of the lower bound (or applying results from \cite{friesecke2002theorem}), we have that there exists a measurable rotation field $R^{h}: \Omega \to SO(3)$ (independent of thickness) such that 
\begin{equation}
\parallel \nabla u^{h} - {R}^{h} \parallel^{2}_{L^{2}(\Omega^{h})} \leq C h^{3}.
\end{equation}
However (using the fact that constants are orthogonal to linear functions in the $(-\frac{1}{2}, \frac{1}{2} )$ interval), we have that for any $R^{h}(x')$
\begin{equation}
\begin{split}
\frac{1}{h^{3}} \parallel \nabla u^{h} - {R}^{h} \parallel^{2}_{L^{2}(\Omega^{h})} &\geq \frac{1}{h^{3}} \int_{\Omega^{h}} \left| z \frac{h^{\alpha-2\beta} \sin(h^{-\beta}x)}{(h^{2(\alpha-\beta)} \cos^{2}(h^{-\beta}x)+1)^{\frac{3}{2}}} \right|^{2} \, dx \, dy \, dz \\
& \to \infty.
\end{split}
\end{equation}

In particular, 
\begin{equation}
\frac{1}{h^{3}}\parallel A^{h}-\mbox{Id} \parallel_{L^{2}(\Omega^{h})}^{2} \to \infty.
\end{equation}
\end{proof}

Since we cannot deduce that the prestrain is of order $h$ from the conclusions, we may ask what additional hypotheses we may add to draw this conclusion. This is done in the following proposition. Before stating it, we introduce a short lemma, which we will need in the proof. 

\begin{lemma}\label{lemmaonsymetricmatrices}
Let $A,M$ be commutative symmetric positive definite $3 \times 3$ matrices, if $Q \in O(3)$ then
\begin{equation}
\parallel A-M \parallel \leq \parallel A-QM \parallel.
\end{equation}
\end{lemma}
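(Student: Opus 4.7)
The plan is to use simultaneous diagonalization. Since $A$ and $M$ are symmetric and commute, there exists an orthogonal matrix $U$ such that both $U^{T}AU$ and $U^{T}MU$ are diagonal. Substituting $\widetilde{A}=U^{T}AU$, $\widetilde{M}=U^{T}MU$, $\widetilde{Q}=U^{T}QU$, and using the fact that the Frobenius norm is invariant under left and right orthogonal multiplication, the inequality
\[
\|A-M\|\leq\|A-QM\|
\]
is equivalent to $\|\widetilde{A}-\widetilde{M}\|\leq\|\widetilde{A}-\widetilde{Q}\widetilde{M}\|$, and $\widetilde{Q}$ is still orthogonal. So I would assume from the outset that $A=\mathrm{diag}(a_{1},a_{2},a_{3})$ and $M=\mathrm{diag}(m_{1},m_{2},m_{3})$ with $a_{i},m_{i}>0$.

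Next I would expand using $\|X\|^{2}=\mathrm{tr}(X^{T}X)$. Since $\|QM\|=\|M\|$ by orthogonality of $Q$,
\[
\|A-QM\|^{2}-\|A-M\|^{2}=-2\,\mathrm{tr}(AQM)+2\,\mathrm{tr}(AM),
\]
so the statement reduces to showing $\mathrm{tr}(AQM)\leq\mathrm{tr}(AM)$.

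For diagonal $A$ and $M$, a direct computation gives
\[
\mathrm{tr}(AQM)=\sum_{i}a_{i}m_{i}Q_{ii},\qquad
\mathrm{tr}(AM)=\sum_{i}a_{i}m_{i}.
\]
Since $Q$ is orthogonal, its columns (and rows) are unit vectors, so $|Q_{ii}|\leq 1$ for every $i$. Combined with $a_{i}m_{i}>0$, this gives
\[
\sum_{i}a_{i}m_{i}Q_{ii}\leq\sum_{i}a_{i}m_{i}|Q_{ii}|\leq\sum_{i}a_{i}m_{i},
\]
which finishes the argument.

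There is no real obstacle: the only conceptual step is recognising that commutativity of the symmetric matrices $A$ and $M$ permits simultaneous diagonalization, after which the problem collapses to a one-line inequality using $|Q_{ii}|\leq 1$. Positive definiteness is used only to ensure $a_{i}m_{i}>0$ so that the diagonal entries of $Q$ can be bounded by their absolute values without sign issues.
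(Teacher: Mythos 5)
Your proof is correct, and it takes a genuinely different route than the paper. The paper characterizes the critical points of $Q \mapsto \|A-QM\|^{2}$ over $O(3)$ via an Euler--Lagrange condition, showing that any minimizer $Q$ must make $Q^{T}AM$ symmetric, then deduces that $Q^{T}AM$ and $AM$ are symmetric square roots of $A^{2}M^{2}$ and reduces the minimum to a sign-choice over eigenvalues $\sum_{i}(\lambda_{i}-a_{i}\gamma_{i})^{2}$ with $a_{i}\in\{\pm1\}$. Your argument avoids the variational machinery entirely: after simultaneous diagonalization (which uses commutativity exactly where the paper does, to diagonalize both $A$ and $M$ in the same basis), you reduce the claim to $\mathrm{tr}(AQM)\le\mathrm{tr}(AM)$ and conclude from the elementary bound $|Q_{ii}|\le 1$ for orthogonal $Q$ together with $a_{i}m_{i}>0$. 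Your approach is shorter and more transparent: it does not need to justify the passage from ``critical point'' to ``minimizer,'' does not invoke polar decomposition or sign ambiguities of square roots, and makes the role of positive definiteness immediate. The paper's route does expose the structure of \emph{all} critical points $Q=R^{T}LR$, which is a bit more information, but that is not needed for the inequality itself.
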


\begin{proof}

We write the condition for minimization of $ \parallel A-QM \parallel^{2}$ subject to the constraint $Q \in O(3):$ the EL equation reads
\begin{equation}
\dot{Q}^{T}{Q}+Q^{T}\dot{Q}=0 \Rightarrow \langle A-QM, \dot{Q}M\rangle=0 \Rightarrow \langle Q^{T}( A-QM), Q^{T}\dot{Q}M\rangle=0 .
\end{equation}
The RHS can be simplified to 
\begin{equation}
\mbox{tr}(Q^{T}AMQ^{T}\dot{Q})=\mbox{tr}(M^{2}Q^{T}\dot{Q}),
\end{equation}
which implies 
\begin{equation}
\mbox{tr}(Q^{T}AMQ^{T}\dot{Q})=0,
\end{equation}
since $M^{2}$ is symmetric and $Q^{T}\dot{Q}$ is antisymmetric. This in turn implies
\begin{equation}\label{ELcondition}
Q^{T}AM \in M^{3\times 3}_{sym},
\end{equation}
since the orthogonal complement of symmetric matrices are anisymmetric ones. Hence, $Q^{T}AM$ and $AM$ are symmetric square roots of $A^{2}M^{2},$ since
\begin{equation}
(Q^{T}AM)^{2}=(Q^{T}AM)^{T}(Q^{T}AM)=A^{2}M^{2}.
\end{equation}
Therefore, if $Q$ is such that $Q$ minimizes $\parallel A-QM \parallel^{2}$ then $Q^{T}AM$ and $AM$ have the same polar decomposition, except for possibly the sign of the eigenvalues. In other words, $Q$ is of the form
\begin{equation}
Q=R^{T}LR,
\end{equation}
where $R$ is such that
\begin{equation}
A=R^{T}DR,
\end{equation}
with $D$ diagonal, and $L$ diagonal with only $\pm 1$ entries:
\begin{equation}
L=\sum a_{i} \mbox{e}_{i}\otimes \mbox{e}_{i},
\end{equation}
where $a_{i} \in \{-1,1\}.$ Hence,
\begin{equation}\label{normofsymmetricmatrix}
\parallel A-QM \parallel^{2}=\sum (\lambda_{i}-a_{i}\gamma_{i})^{2},
\end{equation}
where $\lambda_{i}, \gamma_{i}$ are the eigenvalues of $A$ and $M$ respectively. Since $A$ and $M$ are SPD, we know $\lambda_{i}, \gamma \geq 0.$ Hence, \eqref{normofsymmetricmatrix} is minimized when $a_{i}=1$ and $Q = \mbox{Id}.$ 
\end{proof}

\begin{remark}
The hypothesis that $A$ and $M$ commute is necessary, since otherwise, $AM$ is not symmetric in general, and we can decompose it as
\begin{equation}
AM=QR,
\end{equation}
where $Q$ is in $SO(3)$ and $R$ is symmetric. Hence, $Q$ satisfies the EL equation since $Q^{T}AM$ is symmetric. Therefore, if $A$ and $M$ do not commute, $Q=Id$ does not satisfy eq. \ref{ELcondition} since $AM$ is not symmetric. 
\end{remark}

We now state the desired proposition:

\begin{proposition} \label{prestrainoforderh}
Assume that
\begin{equation}\label{zerobendingenergy2}
\frac{1}{h^{2}} \mathcal{E}^{h}(u^{h}) \leq C < \infty,
\end{equation}
with $A^{h}(x',x_{3}) \to A(x')$ in $L^{2},$ and that $A(x')$ satisfies $\textbf{x}A\textbf{x}^{T} \geq c \parallel \textbf{x} \parallel^{2}.$ Assume further that there exists a rotation field $R(x')$ such that
\begin{equation}\label{uhclosetoA}
\parallel \nabla u^{h}A^{-1}(x')-R(x') \parallel_{L^{2}(\Omega^{h})}^{2} \leq Ch^{3},
\end{equation}
then 
\begin{equation}
\parallel (A^{h})^{T}A^{h}- (A(x'))^{T}A(x') \parallel_{L^{1}(\Omega^{h})} \leq Ch^{2}.
\end{equation}
Furthermore, if $A(x')$ and $A^{h}(x)$ commute, then
\begin{equation}
\parallel A(x')-A^{h}(x) \parallel_{L^{2}(\Omega^{h})}^{2} \leq Ch^{3}.
\end{equation}
\end{proposition}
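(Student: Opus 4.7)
The plan is to derive both conclusions from a single $L^2(\Omega^h)$ comparison: both $R(x')A(x')$ (from the hypothesis \eqref{uhclosetoA}) and a second reference $\tilde R^h(x) A^h(x)$, where $\tilde R^h$ is a pointwise nearest rotation extracted from the energy bound, should be close to $\nabla u^h$, hence close to each other. Once this comparison is established, polarization $F\mapsto F^T F$ removes the rotations and gives the first conclusion on $(A^h)^T A^h - A^T A$, while Lemma \ref{lemmaonsymetricmatrices} converts the same comparison into an estimate on $A-A^h$ under the commutativity hypothesis.

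For the setup, hypothesis iii) on $W$ gives
\begin{equation}
\int_{\Omega^h} \mathrm{dist}^2\bigl(\nabla u^h (A^h)^{-1}, SO(3)\bigr)\, dx \;\leq\; C\, h\, \mathcal{E}^h(u^h) \;\leq\; C h^3.
\end{equation}
Selecting $\tilde R^h(x)\in SO(3)$ pointwise to realize this distance and using the uniform $L^\infty$ bound on $A^h$ (part of the standing setup $A^h=\overline A+hB$), we obtain $\|\nabla u^h - \tilde R^h A^h\|_{L^2(\Omega^h)}^2 \leq C h^3$. Multiplying \eqref{uhclosetoA} on the right by $A\in L^\infty$ gives $\|\nabla u^h - R A\|_{L^2(\Omega^h)}^2 \leq C h^3$, so by the triangle inequality
\begin{equation}\label{keycomparison}
\|\tilde R^h A^h - R A\|_{L^2(\Omega^h)}^2 \leq C h^3.
\end{equation}

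For the first conclusion, since $R,\tilde R^h\in SO(3)$ we have $(RA)^T(RA)=A^T A$ and $(\tilde R^h A^h)^T(\tilde R^h A^h)=(A^h)^T A^h$, so
\begin{equation}
(A^h)^T A^h - A^T A = (\tilde R^h A^h)^T(\tilde R^h A^h - RA) + (\tilde R^h A^h - RA)^T (RA).
\end{equation}
Cauchy--Schwarz together with $\|\tilde R^h A^h\|_{L^2(\Omega^h)},\|RA\|_{L^2(\Omega^h)} = O(h^{1/2})$ (these come from the $L^\infty$ bounds on $A,A^h$ and $|\Omega^h|=O(h)$) and \eqref{keycomparison} then yield
\begin{equation}
\|(A^h)^T A^h - A^T A\|_{L^1(\Omega^h)} \leq C h^{1/2}\cdot h^{3/2} = Ch^2.
\end{equation}

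For the second conclusion, under the commutativity hypothesis I apply Lemma \ref{lemmaonsymetricmatrices} pointwise with $M=A^h(x)$, $A=A(x')$, and $Q=R(x')^T\tilde R^h(x)\in SO(3)\subset O(3)$ to get
\begin{equation}
\|A-A^h\| \leq \|A - R^T\tilde R^h A^h\| = \|RA - \tilde R^h A^h\|,
\end{equation}
the last equality because $R\in SO(3)$ preserves the Frobenius norm. Integrating over $\Omega^h$ and invoking \eqref{keycomparison} delivers $\|A-A^h\|_{L^2(\Omega^h)}^2 \leq Ch^3$. The main technical point I expect is the measurable selection of $\tilde R^h$, which is classical, and the careful tracking of the factor $h^{1/2}$ coming from $|\Omega^h|=O(h)$ in Cauchy--Schwarz; everything else is bookkeeping.
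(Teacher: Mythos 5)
Your proof is correct and takes essentially the same approach as the paper: extract a pointwise rotation field from the energy bound via the rigidity hypothesis (iii), compare $\tilde R^h A^h$ and $RA$ in $L^2(\Omega^h)$, then use Cauchy--Schwarz (polarization) for the $L^1$ estimate and Lemma~\ref{lemmaonsymetricmatrices} for the $L^2$ estimate. One small exposition error: you justify the uniform $L^\infty$ bound on $A^h$ by citing the standing form $A^h=\overline A+hB$, but Section~7 (where this proposition lives) explicitly drops that form; the needed bound $\|A^h\|_{L^2(\Omega^h)}^2\leq Ch$ actually follows from the stated hypothesis $A^h\to A$ in $L^2(\Omega^1)$ after rescaling, while the pointwise bound needed to pass from $\|\nabla u^h(A^h)^{-1}-\tilde R^h\|_{L^2}$ to $\|\nabla u^h-\tilde R^h A^h\|_{L^2}$ is a tacit assumption in the paper's proof as well.
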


\begin{proof}
We start by noting that \eqref{zerobendingenergy2} implies that for some measurable $R: \Omega^{1} \to SO(3)$ we have
\begin{equation}\label{uhclosetoAh}
\parallel \nabla u^{h}-RA^{h} \parallel_{L^{2}(\Omega^{h})}^{2} \leq C h^{3},
\end{equation}
which implies
\begin{equation}
\parallel (A^{h})^{T}A^{h}-(\nabla u^{h})^{T}\nabla u^{h} \parallel_{L^{1}(\Omega^{h})} \leq Ch^{2} 
\end{equation}\footnote{Here we have used a result that, though elementary, may deserve an explanation: If $f_{n}, g_{n} \in L^{2}(\Omega^{h}, M_{n\times n})$ are such that $\parallel f_{n}-g_{n} \parallel_{L^{2}(\Omega^{h})}^{2} \leq C h^{3}$ and $\parallel g_{n} \parallel_{L^{2}(\Omega^{h})}^{2}+\parallel f_{n} \parallel_{L^{2}(\Omega^{h})}^{2}\leq Ch$ then
\begin{equation}
\begin{split}
 \parallel f_{n}^{T}f_{n}-g_{n}^{T}g_{n} \parallel_{L^{1}(\Omega^{h})}&=\parallel f_{n}^{T}f_{n}-f_{n}^{T}g_{n}+f_{n}^{T}g_{n}-g_{n}^{T}g_{n} \parallel_{L^{1}(\Omega^{h})}\\
 &\leq \parallel f_{n}^{T}f_{n}-f_{n}^{T}g_{n}\parallel_{L^{1}(\Omega^{h})}+ \parallel f_{n}^{T}g_{n}-g_{n}^{T}g_{n} \parallel_{L^{1}(\Omega^{h})}\\
 &\leq  \parallel f_{n}^{T}(f_{n}-g_{n})\parallel_{L^{1}(\Omega^{h})}+ \parallel (f_{n}^{T}-g_{n}^{T})g_{n} \parallel_{L^{1}(\Omega^{h})}\\
 &\leq Ch^{2}
 \end{split}
 \end{equation}
}
and also that
\begin{equation}
\parallel (A(x'))^{T}A(x')-(\nabla u^{h})^{T}\nabla u^{h} \parallel_{L^{1}(\Omega^{h})} \leq Ch^{2},
\end{equation}
hence 
\begin{equation} \label{orderhdeviationofprestrain}
\parallel (A^{h})^{T}A^{h}- (A(x'))^{T}A(x')\parallel_{L^{1}(\Omega^{h})} \leq Ch^{2}.
\end{equation} 
To check the second part if $A^{h}$ and $A$ commute, we simply have to note that \eqref{uhclosetoAh} and \eqref{uhclosetoA} imply
\begin{equation}
\parallel A-RA^{h} \parallel_{L^{2}(\Omega^{h})}^{2} \leq h^{3},
\end{equation}
which, together with Lemma \ref{lemmaonsymetricmatrices} implies
\begin{equation}
\parallel  A-A^{h} \parallel_{L^{2}(\Omega^{h})}^{2} \leq h^{3}.
\end{equation}

\end{proof}

\begin{remark}
We may ask if Proposition \ref{prestrainoforderh} is still true under a weaker hypothesis, like 
\begin{equation}
\parallel \overline{\nabla u}^{h}A^{-1}(x')-R(x') \parallel_{L^{2}(\Omega^{h})} \leq Ch^{3},
\end{equation}
where
\begin{equation}
\overline{\nabla u}^{h}=\frac{1}{h}\int_{-\frac{h}{2}}^{\frac{h}{2}} \nabla u^{h} dz, 
\end{equation}
but the simple example $\Omega=(0,1)\times (0,1)$ and 
\begin{equation}
u^{h}(x,y,z)=(x,y,z)+zh^{\alpha}\sin(h^{-\beta}x)\sin(h^{-\beta}y)
\end{equation}
for the right choice of $\alpha$ and $\beta$ (for example $\alpha=2$ and $\beta=3$) shows that this is not true.
\end{remark}


Lastly, we prove that if a sheet is in the bending regime, i.e. if
\begin{equation}
\frac{1}{h^{2}}\mathcal{E}^{h}\left( u^{h} \right) \leq C < \infty
\end{equation} 
and the limiting configuration exists in the strong $W^{1,2}$ sense, the limit of the prestrain is thickness independent. 

\begin{proposition}\label{propositiongraduhconvergestosomething}
Let $A^{h}(x) \to A(x)$ strongly in $L^{2}(\Omega^{1})$ (we abused notation by stating this for $A^{h}$ and not the re-scaled versions). Assume that there exists $X \in L^{2}(\Omega^{1} \to M^{3 \times 3})$ such that $\nabla^{h}\widetilde{u}^{h} \to X$ strongly, where $\widetilde{u}^{h}: \Omega^{1} \to \mathbf{R}^{3}$ is the rescaled version of $u^{h}$, and assume that
\begin{equation}
\frac{1}{h^{2}} \mathcal{E}^{h}(u^{h}) \leq C < \infty. 
\end{equation} 
Then $(A(x))_{2\times 2}$ is independent of $x_{3}$
Furthermore, if $X$ is independent of $x_{3}$ then $A(x)$ is independent of $x_{3}$
\end{proposition}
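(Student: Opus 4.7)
The plan is to combine the $h^2$-scale energy estimate with the strong $L^2$ convergences of $\nabla^h \widetilde u^h$ and $A^h$ to extract, in the limit, a pointwise identity $X^T X = A^T A$. From this, together with the observation that the first two columns of $X$ are automatically $x_3$-independent, both assertions will follow.

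First I would combine the hypothesis $\frac{1}{h^2}\mathcal{E}^h(u^h)\le C$ with the lower bound in hypothesis (iii) on $W$ to obtain
$$\int_{\Omega^1}\mbox{dist}^2\bigl(\nabla^h \widetilde u^h (A^h)^{-1},\,SO(3)\bigr)\,dx\le Ch^2 \to 0.$$
Passing to a subsequence, both $\nabla^h \widetilde u^h \to X$ and $A^h \to A$ pointwise a.e.; since $A$ inherits a.e.\ positive definiteness from the $A^h$, the inverse $(A^h)^{-1}\to A^{-1}$ pointwise, and so by continuity of the distance function, $\mbox{dist}^2(\nabla^h \widetilde u^h (A^h)^{-1},SO(3))\to \mbox{dist}^2(XA^{-1},SO(3))$ a.e. Fatou's lemma then forces $\mbox{dist}(XA^{-1},SO(3))=0$ a.e., so $X=RA$ for some measurable $R\in SO(3)$; in particular $X^T X = A^T A$ a.e.

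Second I would identify the columns of $X$. Since $\nabla^h \widetilde u^h = (\partial_1 \widetilde u^h,\partial_2 \widetilde u^h, h^{-1}\partial_3 \widetilde u^h)\to X$ in $L^2$, the component $\partial_3 \widetilde u^h = h\,(\nabla^h \widetilde u^h)_3 \to 0$ in $L^2$. Hence $\widetilde u^h$ (modulo additive constants) converges in $W^{1,2}$ to a limit $\widetilde u$ independent of $x_3$, and the first two columns of $X$ are $\partial_1 \widetilde u,\partial_2 \widetilde u$, both $x_3$-independent. Consequently $(X^T X)_{2\times 2}$ is independent of $x_3$, and combining with $X^T X = A^T A = A^2$ (by symmetry of $A$ inherited from the $A^h$), the in-plane metric $(A^2)_{2\times 2}$ is independent of $x_3$, which establishes the first assertion. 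For the last claim, if in addition $X$ itself is independent of $x_3$, then $X^T X = A^2$ is too, and since $A$ is the unique SPD square root of $A^2$, $A$ is also independent of $x_3$.

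The technical care lies in the Fatou step: one must justify that the symmetric positive-definite structure on $A^h$ passes to $A$ in the a.e.\ sense, so that $A^{-1}$ is defined a.e.\ and the pointwise distance argument is well posed. This is natural under the hypotheses of the paper (since the energy only makes sense when $(A^h)^{-1}$ is defined) but should be handled explicitly — it is really the only nontrivial point, as once $X^T X = A^T A$ is established the identification of columns and the SPD square-root uniqueness are immediate.
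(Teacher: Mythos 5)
Your proposal is correct in substance and reaches the same key identity $X^TX = A^TA$ by a slightly different technical route. The paper never inverts the limiting $A$: from the energy bound it extracts measurable rotation fields $R^h(x)$ with $\|R^h \nabla^h \widetilde u^h - A^h\|_{L^2}$ small, so that $R^h\nabla^h\widetilde u^h \to A$ in $L^2$, and then passes to the limit in the products $(\nabla^h\widetilde u^h)^T(\nabla^h\widetilde u^h) = (R^h\nabla^h\widetilde u^h)^T(R^h\nabla^h\widetilde u^h)$ in $L^1$ to conclude $X^TX=A^TA$. Your Fatou/a.e.-subsequence argument is cleaner to state, but it genuinely requires $(A^h)^{-1}\to A^{-1}$ a.e., hence $A$ to be a.e. strictly positive definite (not merely positive semi-definite); strong $L^2$ convergence alone does not guarantee this --- a sequence like $A^h = h\,\mathrm{Id}$ converges strongly to $0$ --- so this is an additional hypothesis you must impose, whereas the paper's route sidesteps it entirely by only inverting the given $A^h$. (Both proofs do implicitly use a uniform $L^\infty$ bound on $A^h$ to convert the distance bound into the $L^2$ estimate.) The identification of the first two columns of $X$ via $\partial_3\widetilde u^h\to 0$ and the use of SPD square-root uniqueness for the final claim are the same as in the paper.
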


\begin{proof}
Note that under these hypotheses, we can write
\begin{equation}
A^{h}(x)=A(x)+B^{h}(x),
\end{equation}
where $B^{h}(x) \to 0$ in $L^{2}.$ Since $\mathcal{E}^{h}\leq Ch^{2},$ we know that there exist measurable rotation fields $R^{h}: \Omega^{1} \to SO(3)$ such that
\begin{equation}
\parallel R^{h}\nabla^{h}\widetilde{u}^{h}-A^{h} \parallel_{L^{2}(\Omega^{1})}\leq Kh^{2}.
\end{equation} 
From this we know that $R^{h}\nabla^{h}\widetilde{u}^{h} \to A(x)$ in $L^{2},$ and hence
\begin{equation}
\begin{split}
A^{T}A&=\lim_{h\to 0} (R^{h}\nabla^{h}\widetilde{u}^{h})^{T}(R^{h}\nabla^{h}\widetilde{u}^{h})\\
&=\lim_{h\to 0} (\nabla^{h}\widetilde{u}^{h})^{T}(\nabla^{h}\widetilde{u}^{h})\\
&= X^{T}X,
\end{split}
\end{equation}
where the limit is in the $L^{1}$ topology \footnote{We have used that if $f^{h} \to f$ in $L^{2}$ then $f^{h}(f^{h})^{T} \to ff^{T}$ in $L^{1}.$}. The first and second columns of $X$ are independent of $x_{3}$ (we will prove this in a moment). We therefore have that $(A^{T}A)_{2\times 2}$ is independent of $x_{3},$ and if $X$ is independent of $x_{3},$ then $A^{T}A$ is independent of $x_{3}.$

To prove that the first and second columns of $X$ are independent of $x_{3},$ note that since 
\begin{equation}
\parallel \nabla^{h}\widetilde{u}^{h} \parallel_{L^{2}} \leq C,
\end{equation}
we have that 
\begin{equation}
\parallel \partial_{3}\widetilde{u}^{h} \parallel_{L^{2}} \to 0,
\end{equation}
hence $\widetilde{u}^{h} \to \widetilde{u}$ in $W^{1,2}(\Omega^{1})$ for some $\widetilde{u}$ that satisfies 
\begin{equation}
\partial_{3} \widetilde{u} =0,
\end{equation}
from which we immediately get that $(\nabla \widetilde{u})_{3\times 2}$ is independent of $x_{3},$ since $X_{3\times 2}=(\nabla \widetilde{u})_{3\times 2},$ we have that the first and second columns of $X$ are independent of $x_{3}.$


\end{proof}

\begin{remark}

It is not possible to weaken the hypotheses to $A^{h} \rightharpoonup A,$ since in general this does not imply $\lim_{h \to 0} (A^{h})^{T}A^{h}=A^{T}A.$
\end{remark}

\begin{remark}
The simple example
\begin{equation}
A^{h}(x',x_{3})=
\begin{cases}
\begin{bmatrix}
    1 & 0 & 0  \\
    0 & 1 & 0  \\
    0 & 0 & 1 
  \end{bmatrix} \mbox{ if } x_{3}<0\\
\begin{bmatrix}
    1 & 0 & 0  \\
    0 & 1 & 0  \\
    0 & 0 & 2 
  \end{bmatrix} \mbox{ if } x_{3} \geq 0
\end{cases}
\end{equation}
show that, in general, the third block and column of $A$ may depend on $x_{3},$ even if the other hypotheses of Proposition \ref{propositiongraduhconvergestosomething} are satisfied.
\end{remark}

\section*{Acknowledgments}

The author wants wants to thank his PhD adviser, Prof. Robert Kohn for many useful discussions. This work does not have any conflicts of interest. This work was partially supported by the National Science Foundation through grant DMS-1311833.

\bibliographystyle{apacite}
\bibliography{References}
\end{document}